\documentclass[11pt, oneside]{amsart}
\usepackage{command}
\title[BV algebra structure on the Hochschild cohomology of $E_\infty$-algebras]{Batalin-Vilkovisky algebra structure on the Hochschild cohomology of $E_\infty$-algebras}
\author{Ismaïl RAZACK}
\address{LAMFA, CNRS UMR 7352, Université de Picardie Jules Verne,  33, rue Saint-Leu, 80000, Amiens, France.}
\email{ismail.razack@u-picardie.fr}
\keywords{Intersection (co)homology, Blown-up intersection cohomology, Hochschild (co)homology, Perverse differential graded algebras, Batalin-Vilkovisky algebra, $E_\infty$-operad, Barratt-Eccles operad.}
\subjclass{16E40, 55N33}
\begin{document}
\begin{abstract}
    When $\mathcal{M}$ is a smooth, oriented, compact and simply connected manifold, Luc Menichi has shown that $HH^\ast(C^\ast(\mathcal{M}; \mathbb{F}))$, the Hochschild cohomology of the singular cochain complex of $\mathcal{M}$ is a Batalin-Vilkovisky algebra. Using the properties of algebras over the Barratt-Eccles operad, we show that this results holds even when the manifold is not simply connected. Furthermore, we prove a similar result for pseudomanifolds. Namely, we explain why $HH^\ast_\bullet(\widetilde N^\ast_\bullet(X;\mathbb{F}))$, the Hochschild cohomology of the blown-up intersection cochain complex of a compact, oriented pseudomanifold $X$, is endowed with a Batalin-Vilkovisky algebra structure.
\end{abstract}
\maketitle
{\small\tableofcontents}

\section*{Introduction}
\indent The Hochschild cohomology $HH^\ast(A)$ of a differential graded algebra $A$ (DGA) over a commutative ring $R$ has the structure of a \emph{Gerstenhaber algebra} \cite{Ger63}. This structure can be enhanced into a \emph{Batalin-Vilkovisky algebra} (BV algebra) when $A$ satisfies some form of symmetry. A certain number of cases are collected in \cite{Abb15}. In particular, this is true for the singular cochain complex of certain manifolds. Luc Menichi showed the following result.
\begin{thm*}[{\cite[Theorem 22]{Men09}}]
    Let $\mathcal{M}$ be a compact, simply-connected, oriented, smooth manifold. Then, there is a Batalin-Vilkovisky algebra structure on $HH^\ast(C^\ast(\mathcal{M}; \mathbb{F}))$, the Hochschild cohomology of the singular cochains of $\mathcal{M}$ with coefficients in a field $\mathbb{F}$.
\end{thm*}
More generally, Menichi proved that $HH^\ast(A)$ is BV algebra when $A$ is a \emph{derived Poincaré duality algebra} (DPDA) i.e.~when $A$ is isomorphic to its linear dual (up to a shift in degree) in $\mathscr{D}(A^e)$, the derived category of $A$-bimodules (and verifies a certain cyclicity condition). The precise statement is recalled in Proposition \ref{prop:12_Menichi}. We quickly explain why under the assumptions of the above given theorem $C^\ast(\mathcal{M}; \mathbb{F})$ is a DPDA. Poincaré duality is satisfied since $\mathcal{M}$ is a compact, oriented, smooth manifold. Taking the cap product with a fundamental cycle $\xi_M$ gives a quasi-isomorphism of left $C^\ast(\mathcal{M}; \mathbb{F})$-modules (we omit the shift in degree)
\[\begin{array}{ccc}
     C^\ast(\mathcal{M}; \mathbb{F}) &\xlongrightarrow{\simeq} &C_\ast(\mathcal{M}, \mathbb{F})  \\
     \phi & \longmapsto & \phi\cap \xi_M. 
\end{array}\]
Since $\mathcal{M}$ is simply-connected, we have Jones' isomorphism \cite{Jon87}
\[H_\ast(\mathcal{L}\mathcal{M}; \mathbb{F}) \simeq HH^\ast(C^\ast(\mathcal{M}; \mathbb{F}), C_\ast(\mathcal{M}; \mathbb{F}))\]
where $\mathcal{L}\mathcal{M}:=\mathcal{C}^0(\mathbb{S}^1, \mathcal{M})$ is the loop space of $\mathcal{M}$. Using this isomorphism, we're able to lift the class $[\xi_M]$ into an element in $HH^\ast(C^\ast(\mathcal{M}; \mathbb{F}), C_\ast(\mathcal{M}; \mathbb{F}))$. Thus, we get a quasi-isomorphism $P\xrightarrow{\simeq} C_\ast(\mathcal{M}; \mathbb{F})$ of $C^\ast(\mathcal{M}; \mathbb{F})$-bimodules with $P$ a cofibrant approximation of $C^\ast(\mathcal{M}; \mathbb{F})$ as a bimodule. In other words, this proves that $C^\ast(\mathcal{M}; \mathbb{F})$ is a DPDA. 

In this paper, using tools from operad theory, we show that we can lift Poincaré duality in the derived category without assuming $\mathcal{M}$ to be simply connected. Our proof is purely algebraic and only relies on the multiplicative structure on the singular cochain complex. We will prove the following statement (see corollary \ref{coro:algBE_cochain_envalg}).
\begin{claim*}
Let $\mathbb{F}$ be a field. There exists a functor $E: Top^{op} \to DGA$ from topological spaces to the category of DGAs such that for any topological space $\mathcal{X}$ the following properties are verified:
\begin{enumerate}[label=\roman*)]
    \item we have a quasi-isomorphism of left $E(\mathcal{X})$-modules $E(\mathcal{X})\xtwoheadrightarrow{\simeq} C^\ast(\mathcal{X}; \mathbb{F})$,            
    \item there is a DGA morphism $C^\ast(\mathcal{X}; \mathbb{F})\otimes C^\ast(\mathcal{X}; \mathbb{F})^{op}\to E(\mathcal{X})$,
    \item and there exists an isomorphism of DGAs $E(\mathcal{X}) \simeq E(\mathcal{X})^{op}$.
\end{enumerate}
Furthermore, these morphisms induce natural transformations. 
\end{claim*}
We will show, using i) and iii), that we can construct morphisms of left $E(\mathcal{X})$-modules
\[C^\ast(\mathcal{X}; \mathbb{F}) \xleftarrow{\simeq} E(\mathcal{X}) \rightarrow C_\ast(\mathcal{X}; \mathbb{F}).\]
The morphism on the right is a quasi-isomorphism if $\mathcal{X}$ satisfies Poincaré duality. Note that $C_\ast(\mathcal{X}; \mathbb{F})$, being isomorphic to the dual of $C^\ast(\mathcal{X}; \mathbb{F})$, is only endowed with a right $E(\mathcal{X})$-module structure at first glance. Property iii) implies that it is also a left $E(\mathcal{X})$-module. By ii), the morphisms given above are morphisms of $C^\ast(\mathcal{X}; \mathbb{F})$-bimodules. We then factorize a cofibrant resolution $P\xtwoheadrightarrow{\simeq} C^\ast(\mathcal{X}; \mathbb{F})$ through $E(\mathcal{X})$ to get quasi-isomorphisms of $C^\ast(\mathcal{X}; \mathbb{F})$-bimodules
\[C^\ast(\mathcal{X}; \mathbb{F}) \xleftarrow{\simeq} P \xrightarrow{\simeq} C_\ast(\mathcal{X}; \mathbb{F}).\]

Our candidate for the algebra $E(\mathcal{X})$ is $\env(\overline{C^\ast(\mathcal{X}; \mathbb{F})})$, the enveloping algebra of $\overline{C^\ast(\mathcal{X}; \mathbb{F})}$ over \emph{the Barratt-Eccles operad} $\mathcal{E}$ (see subsection \ref{subsec:BE}). 

Using the claim mentionned above we're able to prove the following statement.

\begin{thm*}[{\textbf{\ref{thm:BV-Hoch-via-operads}}}]
    Let $\mathcal{M}$ be a compact, oriented, smooth manifold. Then, there exists a Batalin-Vilkovisky algebra structure on $HH^\ast(C^\ast(\mathcal{M}; \mathbb{F}))$, the Hochschild cohomology of the singular cochains of $\mathcal{M}$ with coefficients in a field $\mathbb{F}$.
\end{thm*}

By \cite[Theorem 3]{FMT05}, we know that a homotopy equivalence $f:\mathcal{X}_1\to \mathcal{X}_2$ between topological spaces induces an isomorphism of Gerstenhaber algebras $HH^\ast(C^\ast(\mathcal{X}_1; \mathbb{F}))\simeq HH^\ast(C^\ast(\mathcal{X}_2; \mathbb{F}))$. Furthermore, if we suppose that $\mathcal{X}_1, \mathcal{X}_2$ satisfy Poincaré duality and if $f$ sends a fundamental class of $\mathcal{X}_1$ to a fundamental class of $\mathcal{X}_2$, it is possible to show that $f$ induces an isomorphism of BV algebras $HH^\ast(C^\ast(\mathcal{X}_1; \mathbb{F}))\simeq HH^\ast(C^\ast(\mathcal{X}_2; \mathbb{F}))$. In other words, the BV algebra structure we obtain is an oriented homotopy type invariant.   

In a previous paper \cite{Raz23}, we asked ourselves if Luc Menichi's result holds for spaces with singularities, notably for \emph{pseudomanifolds}. Note that, in general, Poincaré duality is not verified for such spaces so one can't directly adapt Menichi's approach. Hopefully, Goresky and MacPherson \cite{GM80} have introduced the \emph{intersection chain complex} in order to restore Poincaré duality for pseudomanifolds. It corresponds to a family of chain complexes $\{I^{\overline{p}}C_\ast(X; R)\}_{\overline{p}\in P_{X}}$ indexed by a poset of \emph{perversities}. The perversities are geometric parameters which restrict the chains which are allowed to intersect the singular part of a pseudomanifold. Goresky and MacPherson have also presented the intersection cochain complex but one disadvantage of their approach is that Poincaré duality is only obtained as an isomorphism in homology and one can't lift this result at the level of chain complexes. There exist several variants to Goresky and MacPherson's intersection (co)homology theory. In particular, we work with the \emph{blown-up intersection cochain complex} studied by Chataur, Saralegui and Tanré in \cite{CST18BUP-Alpine} for instance. The reader can consult appendix \ref{app:inter_hom} for a quick presentation of these intersection homology theories. Using a blown-up version of Sullivan's polynomial forms, we've shown the following result. 
\begin{proposition*}[{\cite[Proposition 4.2.1]{Raz23}}]
        Let $X$ be a compact, oriented, second-countable pseudomanifold. Then, there exists a Batalin-Vilkovisky algebra structure on $HH^\ast_\bullet(\widetilde N_\bullet^\ast(X; \mathbb{Q}))$, the Hochschild cohomology of the blown-up cochain complex of a pseudomanifold $X$ with coefficients over $\mathbb{Q}$. 
\end{proposition*}

In this paper, following an approach similar to the one described above for the singular cochain complex of a manifold, we prove the next result. 

\begin{thm*}[\textbf{\ref{thm:BV-Hoch-bup}}]
        Let $X$ be a compact, oriented, second-coutable pseudomanifold. Then, there exists a perverse Batalin-Vilkovisky algebra structure on $HH^\ast_\bullet(\widetilde N_\bullet^\ast(X; \mathbb{F}))$, the Hochschild cohomology of the blown-up cochain complex of a pseudomanifold $X$ with coefficients over any field $\mathbb{F}$. 
\end{thm*}
Furthermore, by \cite[Proposition 4.2.2]{Raz23}, if there exists a stratified homotopy equivalence $f:X_1 \to X_2$ between two pseudomanifolds $X_1, X_2$ that satisfy the assumptions of the above theorem and if $f$ sends a fundamental class on a fundamental class, then we have an isomorphism of perverse BV algebras $HH^\ast_\bullet(\widetilde N_\bullet^\ast(X_1; \mathbb{F}))\simeq HH^\ast_\bullet(\widetilde N_\bullet^\ast(X_2; \mathbb{F}))$. Hence, we have an oriented stratified homotopy type invariant which is finer than the classical Hochschild cohomology. 
\begin{center}
    \textbf{Outline of the paper}
\end{center}

In appendix \ref{app:inter_hom}, we define pseudomanifolds and we recall results about Goresky and MacPherson's intersection homology theory. We also present Chataur, Saralegui and Tanré's blown-up intersection cochain complex and give its main properties. We'll treat these objects as \emph{perverse chain complexes} and use a framework introduced by Hovey in \cite{Hov09}. These results will be used in the second section. 

In section \ref{sect:sing_cochain}, we state some general results about operads. We recall how operads and algebras over operads are defined and we present the enveloping algebra of an algebra over an operad. We also give results about the Barratt-Eccles operad $\mathcal{E}_+$. We show that the claim mentioned above follows from a general result about algebras over $\mathcal{E}_+$ (see Proposition \ref{prop:algBE_envalg}). Using this proposition, we prove Theorem \ref{thm:BV-Hoch-via-operads}.

In section \ref{sect:bup_cochain}, we first recall the construction of Hochschild cohomology for a perverse differential graded algebra and state some of its properties. We then prove that the blown-up intersection cochain complex is endowed with a structure of algebra over the Barratt-Eccles. We finally deal with the demonstration of Theorem \ref{thm:BV-Hoch-bup} by adapting to perverse chain complexes the proof of Theorem \ref{thm:BV-Hoch-via-operads}.

\begin{center}
    \textbf{Acknowledgments}
\end{center}
This preprint is based on work done during my PhD thesis on the \emph{Hochschild cohomology of intersection algebras}. I would like to thank David Chataur, my PhD thesis advisor, for valuable discussions, suggestions and for taking the time to read various drafts of this paper. I am also grateful to Benoit Fresse for a discussion about the Barratt-Eccles operad and its properties. The proof of Theorem \ref{thm:BV-Hoch-via-operads} has been simplified thanks to his help. 

\textbf{Notations and conventions:}
\begin{enumerate}[label=\roman*)]
    \item $\mathbb{F}$ denotes a field and $R$ denotes a commutative ring.
    \item Let $M$ be an $R$-module, its linear dual is denoted $M^\vee:=\Hom_R(M,R)$.
    \item $Ch(R)$ denotes the category of chain complexes. A chain complex $(Z_\ast, d)$ is a sequence of lower $\mathbb{Z}$-graded $R$-modules $\{Z_i\}_{i\in\mathbb{Z}}$ with a degree $-1$ differential. The chain complex whose components are all trivial is denoted $0$.
    \item A cochain complex $(Z^\ast, d)$ is a sequence of upper $\mathbb{Z}$-graded $R$-modules $\{Z^i\}_{i\in\mathbb{Z}}$ with a degree $+1$ differential. With the \emph{classical convention} of \cite[pp 41-42]{FHT01}, we define a chain complex $(Z_\ast, d)$ by setting for $i\in\mathbb{Z}$
    \[Z_i:=Z^{-i}.\]
    Hence, we can think of a cochain complex as an object in $Ch(R)$.
    \item Let $(Z_\ast, d)$ be a chain complex. Its \emph{linear dual} $DZ_\ast$ is the chain complex whose degree $i\in\mathbb{Z}$ component is $DZ_i:=\Hom_R(Z_{-i}, R)$. The differential is given by the precomposition with $d$. 
     \item Let $n\in \mathbb{Z}$. The $n$-\emph{sphere chain complex} $\mathbb{S}^n$ is defined as the chain complex concentrated in degree $n$ with $(\mathbb{S}^n)_n=R$ with trivial differential.\\
    The $n$-\emph{disk chain complex} is denoted $\mathbb{D}^n$. It is the chain complex whose components are $R$ in degrees $n$ and $n-1$ and $0$ elsewhere. The only non-trivial differential is $(\mathbb{D}^n)_n\to (\mathbb{D}^n)_{n-1}$ and it is given by the identity.  
    \item The \emph{suspension} $sC_\ast$ of a chain complex $C_\ast$ is the shift in degree by 1 i.e. for $n\in \mathbb{N}$, $(sC)_n=C_{n-1}$. For a cochain complex $C^\ast$, this corresponds to a shift in degree by -1 i.e. $(sC)^n=C^{n+1}$.
    \item Let $A$ be a unital algebra. It is endowed with a morphism of algebras $\eta:\mathbb{F}\to A$. We denote by $\overline{A}:=A/\eta(\mathbb{F})$ the \emph{reduced algebra} i.e. the non unital algebra associated to $A$.
\end{enumerate}

\section{Generalization of Luc Menichi's result}\label{sect:sing_cochain}
    The goal of this section is to give another proof of \cite[Theorem 22]{Men09} using operadic arguments and without assuming that the manifold is simply-connected. 
    \begin{letterthm}\label{thm:BV-Hoch-via-operads}
            Let $\mathcal{M}$ be a compact, oriented, smooth manifold. Then, there exists a Batalin-Vilkovisky algebra structure on $HH^\ast(C^\ast(\mathcal{M}; \mathbb{F}))$, the Hochschild cohomology of the singular cochains of $\mathcal{M}$ with coefficients in a field $\mathbb{F}$.
    \end{letterthm}
    In the second section of this paper, we will extend this result to $HH^\ast_\bullet(\widetilde N_\bullet^\ast(X; \mathbb{F}))$, the Hochschild cohomology of the blown-up cochain complex of a pseudomanifold $X$ with coefficients over any field $\mathbb{F}$. 
    
    \subsection{General results about operads}\label{sect: }
    
    Operads encode the different types of algebras and are useful to study properties (for instance, associativity and commutativity) up to higher homotopies. The term \emph{operad} first appeared in Peter May's study of iterated loop spaces \cite{May72}. Independently, Boardman, Vogt have considered an analogous construction in their work \cite{BV73}. The reader may consult the first chapter of Markl, Shnider, and Stasheff's book \cite{MSS02} for a detailed history of this concept. 
    
    Informally, an operad in $R$-modules is a family of modules $\{M(k)\}_{k\in \mathbb{N}}$ which represent the algebraic structures associated to a certain type of algebra. In particular, the elements of $M(k)$ contain all the data regarding the operations with $k$ arguments. We can define operads in any symmetric monoidal category. The main cases we will treat are the categories of sets, $R$-modules, chain complexes and perverse chain complexes over $R$. 
        \subsubsection{Operads and algebras over operads}
    % Our main references for this section are \cite{May72} (original paper May), \cite{LV12} (Loday Valette textbook), \cite{Fre09} (Fresse module over algebras over operads), \cite{KM95} (Kriz-May, explicit definitions using commu diag). 
    We begin by introducing some notations for the symmetric groups, following \cite{BF04}.
    \begin{defi}
        For $k\in\mathbb{N}$, the \emph{symmetric group} on $k$ elements is the group of bijection between $\{1, \ldots, k\}$ and itself, it is denoted $\mathfrak{S}_k$. We will refer to a permutation $\sigma\in\mathfrak{S}_k$ by giving the sequence of its values $(\sigma(1), \ldots, \sigma(k))$. The identity permutation of $\mathfrak{S}_k$ is denoted $1_k$.
    \end{defi} 
    \begin{defi}
    We consider a permutation $\sigma\in\mathfrak{S}_k$. A sequence of integers $i_1, \ldots, i_k\in \mathbb{N}$ determines a partition of the set $\{1, \ldots, i_1+\ldots+i_k\}$ into $k$ blocks
    \[\{\boxed{1, \ldots, i_1}, \ldots, \boxed{i_1+\ldots+i_{k-1}+1, \ldots, i_1+\ldots+i_k}\}\]
    where for $1\leq s\leq k$, the $s^{\text{th}}$ block contains $i_s$ elements.
    The \emph{block permutation} $\sigma_\ast(i_1, \ldots, i_k)\in \mathfrak{S}_{i_1+\ldots+i_k}$ is the transformation that permutes these $k$ blocks just as $\sigma$ permutes the set $\{1, \ldots, k\}$.   
    \end{defi}
    \begin{exemple}
        If $\sigma=(3, 2, 1)$ then for $p,q,r \in \mathbb{N}$ we have $\sigma_\ast(p, q, r)=(p+q+1, \ldots, p+q+r, p+1, \ldots, p+q, 1, \ldots, p)$.
    \end{exemple}
    We can know define the notion of operad. We follow May's original approach \cite{May72}. 
    \begin{defi}
        Let $(\mathcal{C}, \otimes, \mathcal{I})$ be a symmetric monoidal category. An \emph{operad} $\mathcal{O}$ in $\mathcal{C}$ is a family $\{\mathcal{O}(k)\}_{k\in\mathbb{N}}$ of objects in $\mathcal{C}$ that is endowed with the following data (known as \emph{May's axioms}).
        \begin{itemize}
            \item \textbf{Composition products:} For any $k, i_1,\ldots, i_k\in \mathbb{N}$, we have morphisms in $\mathcal{C}$
            \[\begin{array}{rrcl}
                \gamma:& \mathcal{O}(k)\otimes\mathcal{O}(i_1)\otimes \ldots \otimes \mathcal{O}(i_k) &\to& \mathcal{O}(j), \\
                ~ & (a, b_1, \ldots, b_k) & \mapsto & \gamma(a; b_1,\ldots, b_k)
            \end{array}\]
            where $j=\sum_{s=1}^k i_s$, which verify the following associativity property: for any $a\in \mathcal{O}(k)$, $b_s\in \mathcal{O}(i_s)$, $c_r\in \mathcal{O}(i_r)$ with $1\leq s\leq k$ and $1\leq r\leq j$ we have
            \[\gamma(\gamma(a; b_1,\ldots, b_k); c_1,\ldots, c_j)=\gamma(a; d_1, \ldots, d_k)\]
            with $d_s=\gamma(b_s; c_{i_1+\ldots+i_{s-1}+1}, \ldots, c_{i_1+\ldots+i_s})$. 
            \item \textbf{Identity element:} An element $1\in \mathcal{O}(1)$ such that for any $k\in\mathbb{N}$, $a\in\mathcal{O}(k)$ we have
            \[\gamma(1; a)=a \text{ and }\gamma(a; \underbrace{1, \ldots, 1}_{k\text{ times}})=a.\]            
            \item \textbf{Equivariance relations:} For any $n\in \mathbb{N}$, the symmetric group $\mathfrak{S}_n$ acts on $\mathcal{O}(n)$ and for any $k, i_1,\ldots, i_k\in \mathbb{N}$, $\sigma\in \mathfrak{S}_k$, $\tau_s\in \mathfrak{S}_{i_s}$, $a\in \mathcal{O}(k)$, $b_s\in \mathcal{O}(i_s)$ with $1\leq s\leq k$ the following equivariance identities are satisfied
            \[\gamma(\sigma.a; b_1, \ldots, b_k)=\sigma_\ast(i_1,\ldots, i_k).\gamma(a; b_{\sigma^{-1}(1)}, \ldots, b_{\sigma^{-1}(k)}) \]
            and
            \[\gamma(a; \tau_1.b_1, \ldots, \tau_k.b_k)=(\tau_1\oplus\ldots\oplus\tau_k).\gamma(a; b_1, \ldots, b_k) \]
            where $\sigma_\ast(i_1,\ldots, i_k)$ is the block permutation associated to $\sigma$ and $\tau_1\oplus\ldots\oplus \tau_k$ denotes the image of $(\tau_1, \ldots, \tau_k)$ by the inclusion $\mathfrak{S}_{i_1}\times \ldots \times \mathfrak{S}_{i_k}$ in $\mathfrak{S}_j$. 
        \end{itemize}
    \end{defi}
    \begin{rem}
        An element $a\in \mathcal{O}(k)$ is said to be of \emph{arity} $k$ and should be thought as a multi-linear operation in $k$ variables $x_1, \ldots, x_k$. This is why we will denote it by $a(x_1,\ldots, x_k)$. In the same way, we denote the composition product $\gamma(a; b_1,\ldots, b_k)$ by $a(b_1, \ldots, b_k)$.
    \end{rem} 
    \begin{rem}
        For $k, j\in \mathbb{N}$, we can define \emph{partial composition products}, morphisms in $\mathcal{C}$ that are defined for $1\leq s\leq k$ by
        \[
        \begin{array}{rrcl}
            - \circ_s -:&   \mathcal{O}(k)\otimes \mathcal{O}(j)&\to& \mathcal{O}(k+j-1)   \\
             ~ &            (a, b) & \mapsto & a( \underbrace{1,\ldots, 1}_{s-1\text{ terms}}, b, 1, \ldots, 1). 
        \end{array}
        \]
    \end{rem}
    \begin{rem}
        The conditions that are verified by an operad can be expressed using trees, see Figures \ref{fig:asso_comp_prod}, \ref{fig:operad_identity}. One can check Ginzburg and Kapronov's article \cite{GK94} for a detailed presentations on trees.
    \end{rem}      
    
    \begin{figure}[ht]
    \centering
    \def\svgscale{0.9}
    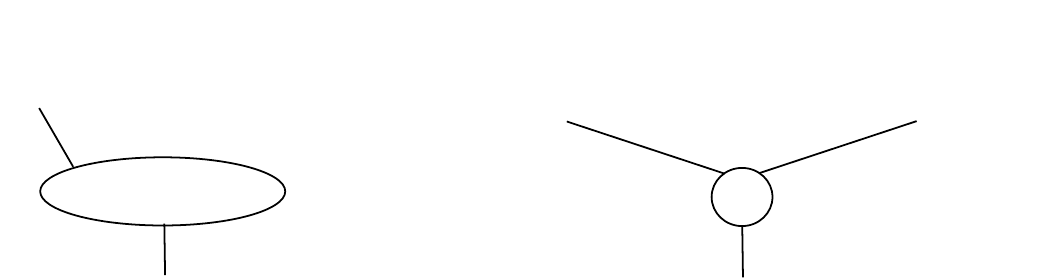
    \caption{Associativity of composition products}
    \label{fig:asso_comp_prod}
    \end{figure}
    \begin{figure}[ht]
    \centering
    \def\svgscale{0.8}
    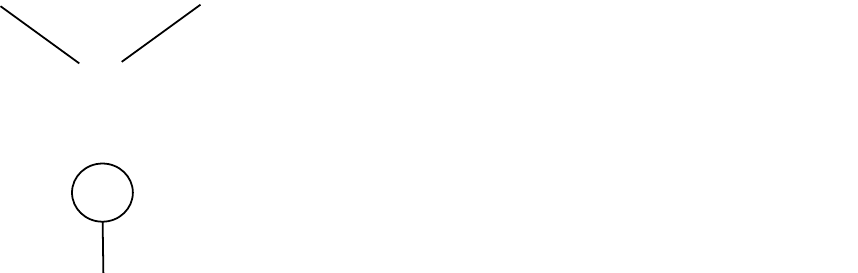
    \caption{Identity element}
    \label{fig:operad_identity}
    \end{figure}
    \begin{rem}\label{rem:sym_objects}
    \begin{itemize}
        \item There exists another way of defining an operad by considering \emph{$\Sigma_\ast$-objects}. A $\Sigma_\ast$-object in $\mathcal{C}$ is a family of objects $\{M(k)\}_{k\in\mathbb{N}}$ such that for any $k\in\mathbb{N}$, the symmetric group $\mathfrak{S}_k$ acts on $M(k)$. One can show that the category of $\Sigma_\ast$-objects is actually symmetric monoidal. An operad is then defined as a monoid in this category. More details can be found in \cite[Chapter 2]{Fre09}. 
        \item There exists a cofibrantly generated model category structure on $\Sigma_\ast$-objects and a semi-model category for operads over a model category $\mathcal{C}$. We don't give explicit descriptions of these structures but we refer to \cite[Part III]{Fre09} for constructions and details. 
    \end{itemize}
    \end{rem}
    We now give examples of operads in the category of sets.. 
    \begin{exemple}
        The \emph{commutative operad} $Com$ is the operad which is given for $k\in\mathbb{N}^\ast$ by $Com(k)=\{\ast\}$ the one-point set on which $\mathfrak{S}_k$ acts trivially. We set $Com(0)=\emptyset$.
    \end{exemple}
    \begin{exemple}
        The \emph{associative operad} $As$ is the operad which is given for $k\in\mathbb{N}^\ast$ by $As(k)=\mathfrak{S}_k$ with the action of $\mathfrak{S}_k$ given by left translation. We set $As(0)=\emptyset$.
    \end{exemple}    
    \begin{rem}\label{rem:unitary_ops}
        The associative and commutative operads are examples of \emph{non-unitary operads}. Following Fresse (\cite[Section 3.1.10]{Fre09}), we say that an operad $\mathcal{O}$ in $\mathcal{C}$ is \emph{non-unitary} if $\mathcal{O}(0)=0_{\mathcal{C}}$ the initial object of $\mathcal{C}$. An operad is \emph{unitary} if $\mathcal{O}(0)=\mathcal{I}$ where $\mathcal{I}$ is the monoidal unit of $\mathcal{C}$. There exist unitary versions of the commutative and associative operads, we denote them $Com_{+}$ and $As_{+}$ respectively. They are defined like their non-unitary counterparts but we set $Com_{+}(0)=As_{+}(0)=\{\ast\}$ the one-point set. 
    \end{rem}
    \begin{rem}
        Let $\mathcal{C}$ be a symmetric monoidal category. The forgetful functor $\mathcal{C}\to Sets$ admits a left adjoint $Sets\to \mathcal{C}$. Using it, one can define commutative and associative operads in any symmetric monoidal category. The definitions of theses operads in the category of $R$-modules is presented in \cite[Section 3.1.8]{Fre09}. For instance, the commutative operad on $R$-modules is given by  $Com(n)=R$ for any $n\in\mathbb{N}^\ast$ and $Com(0)$ is the zero module. 
    \end{rem}
    \begin{exemple}
        The \emph{permutation operad} $\mathfrak{S}$ is the operad in the category of sets (endowed with the cartesian product) whose arity $k\in \mathbb{N}$ component is given by $\mathfrak{S}(k):=\mathfrak{S}_k$. For $k, i_1, \ldots, i_k \in \mathbb{N}$ and $\sigma\in \mathfrak{S}(k)$, $\tau_s\in\mathfrak{S}(i_s)$ for $1\leq s \leq k$, the composition product is given by
        \[\sigma(\tau_1,\ldots, \tau_k):=(\tau_1\oplus\ldots\oplus\tau_k).\sigma_\ast(i_1,\ldots, i_k)\]
        which is also equal to $\sigma_\ast(i_1,\ldots, i_k).(\tau_{\sigma(1)}\oplus\ldots\oplus\tau_{\sigma(k)})$. For instance, we have 
        \[(3,2,1)((12), (12), 1_1)=(5,3,4,1,2).\]
    \end{exemple}
    It is natural to consider \emph{morphisms of operads}. They will make it possible to transfer structures between operads.
    \begin{defi}
        Let $\mathcal{O}=\{\mathcal{O}(k)\}_{k\in\mathbb{N}}$ and $\mathcal{P}=\{\mathcal{P}(k)\}_{k\in\mathbb{N}}$ be two operads in a symmetric monoidal category $\mathcal{C}$. A \emph{morphism of operads} is a family of morphisms in $\mathcal{C}$, $\{f_k:\mathcal{O}(k)\to \mathcal{P}(k)\}_{k\in\mathbb{N}}$ such that each $f_k$ commutes with the action of $\mathfrak{S}_k$ and with the composition products. 
    \end{defi} 

    We now define the notion of \emph{algebra over an operad} following \cite[Definition 2.1]{KM95}. This will correspond to objects which are endowed with the operations encoded by an operad. 
    \begin{defi}
        Let $\mathcal{O}$ be an operad in a symmetric monoidal category $(\mathcal{C}, \otimes, \mathcal{I})$. An \emph{algebra over the operad} $\mathcal{O}$ is an object $A\in \mathcal{C}$ endowed, for every $k\in\mathbb{N}$, with a morphism in $\mathcal{C}$, $\lambda:\mathcal{O}(k)\otimes A^{\otimes k}\to A$ known as \emph{evaluation morphisms} that verify the following properties.
        \begin{itemize}
            \item \textbf{Associativity:} For any $k, i_1, \ldots, i_k\in\mathbb{N}$ and $j=\sum_{s=1}^k i_s$, the following diagram commutes
            \[
            % https://tikzcd.yichuanshen.de/#N4Igdg9gJgpgziAXAbVABwnAlgFyxMJZABgBpiBdUkANwEMAbAVxiRAB12BbOnACwDGjYAHkAvgAoA1gEpOEPF3ice-IQ1GSsAfQCMc9gqxK4AAk4MoCs-MXxz3XoOHiJO2SDGl0mXPkIoAEzkVLSMLGwqTuqaEgBWBkYmpgCCAHrAtsb2cWKe3iAY2HgERMG6ofTMrIggKfk+xf5EZIGV4TUcjmoukrJZyVE9Gq46+gP26ZmGdmZjYhM27JbWDkn2Q84jWtr9M9lmU4um7nlejX6lQaRt1FURtZsxrnvrhxnHUmIAdJ6hMFAAObwIigABmACcIFwkGQQDgIEhdHcOpF2IC6FweItOFgoA0QJDoUjqAikMEwtU0QxMQAjKB0AlEmGIAAspMRiAp906FjpDJA1BptJgDAACr4SgEQBCsIC+DgmVCWQBmDlIdmUh5dPE45b8ugffbJL5K4mIOFkxAqsQUMRAA
            \begin{tikzcd}
            \mathcal{O}(k)\otimes\mathcal{O}(i_1)\otimes \ldots \otimes \mathcal{O}(i_k)\otimes A^{\otimes j} \arrow[rr, "\gamma\otimes\id"] \arrow[dd]                                                  &  & \mathcal{O}(j)\otimes A^{\otimes j} \arrow[d, "\lambda"]   \\
                                                                                                                                                                                    &  & A                                                          \\
            \mathcal{O}(k)\otimes \mathcal{O}(i_1)\otimes A^{\otimes i_1}\otimes \ldots \otimes \mathcal{O}(i_k)\otimes A^{\otimes i_k} \arrow[rr, "\id\otimes\lambda^{\otimes k}"'] &  & \mathcal{O}(k)\otimes A^{\otimes k}. \arrow[u, "\lambda"']
            \end{tikzcd}
            \]
            \item \textbf{Unit:} For any $a\in A$, we have $\lambda(1; a)=a$.
            \item \textbf{Equivariance:} For any $k\in \mathbb{N}$, $p\in\mathcal{O}(k)$ and $a_1,\ldots, a_k \in A$ we have the equality
            \[\lambda(p, a_1, \ldots, a_k)=\lambda(\sigma.p, a_{\sigma^{-1}(1)}, \ldots, a_{\sigma^{-1}(k)}).\]
        \end{itemize}
        In what follows, we will use the notation $p(a_1, \ldots, a_k):=\lambda(p; a_1, \ldots, a_k)$.
    \end{defi}
    \begin{rem}\label{rem:alg_op_gen_case}
         Let $\mathcal{D}$ be a \emph{symmetric monoidal category over }$\mathcal{C}$ (this means that $\mathcal{D}$ is a symmetric monoidal category and that it has a compatible external product $\mathcal{C}\times \mathcal{D}\to \mathcal{D}$). If $\mathcal{O}$ is an operad in $\mathcal{C}$, it is possible to endow an object in $\mathcal{D}$ with a structure of algebra over $\mathcal{O}$. The reader can check \cite[Chapters 1 and 3]{Fre09} for more details. 
    \end{rem}
    \begin{exemple}
        Let $\mathcal{C}$ be a symmetric monoidal category. The algebras over $Com$ (respectively $As$) in $\mathcal{C}$ are equivalent to the commutative (respectively associative) monoids in $\mathcal{C}$ without units. As for the unitary versions $Com_{+}$ and $As_{+}$, we can show that their algebras are monoids with units which are respectively commutative and associative. We refer to \cite[Section 3.2.4]{Fre09} for details. 
    \end{exemple}
    \begin{rem}\label{rem: op morph induce morph of algebras}
        Notice that an operad morphism  $\phi: \mathcal{O} \to \mathcal{P}$ induces a morphism $\phi^\ast$ between the algebras over $\mathcal{P}$ and the algebras over $\mathcal{O}$. 
    \end{rem}
    
         % \subsection{Model category structure on operads}
         % Model cat on $\Sigma_\ast$-objects (Remark \ref{rem:symmetric_objects}).
         % \begin{proposition}
         %     If $\mathcal{C}$ is endowed with a model category structure, then we define a model category structure on $\Sigma_\ast$-objects in $\mathcal{C}$ by saying that a morphism of $\Sigma_\ast$-objects $f:M\to N$ is a weak-equivalence (respectively a fibration) if for every $k\in \mathbb{N}$, the morphism $f(k):M(k)\to N(k)$ is a weak-equivalence (respectively a fibration) in $\mathcal{C}$. \cite[Section 11.4]{Fre09}
         % \end{proposition}
         % Semi model cat on operads. $A_\infty$ and $E_\infty$ operads. Definition and motivation. 
        \subsubsection{Enveloping algebra}
    We now present the notion of \emph{enveloping algebra} $\mathcal{U}_{\mathcal{O}}(A)$ for an algebra $A$ over an operad $\mathcal{O}$ which extends the universal enveloping algebra which is known in Lie theory. Just as algebras over operads correspond to classical types of algebras, the left $\mathcal{U}_{\mathcal{O}}(A)$-modules correspond to the \emph{representations} of $A$. This is made precise in \cite[Proposition 4.3.2]{Fre09}. 

    In this subsection, we consider $\mathcal{O}$ an operad and $A$ an algebra over $\mathcal{O}$. In order to define the enveloping algebra, we will need some notations. 
    \begin{defi}
        Let $m\in \mathbb{N}$. The \emph{shifted object} $\mathcal{O}[m]$ associated to $\mathcal{O}$ is a family of chain complexes $\{\mathcal{O}[m](k)\}_{k\in \mathbb{N}}$ defined by setting $\mathcal{O}[m](k):=\mathcal{O}(m+k)$ for $k\in \mathbb{N}$. The symmetric group $\mathfrak{S}_k$ acts on $\{m+1, \ldots, m+k\}\subset \{1, \ldots, m+k\}$, hence we have an injection $\mathfrak{S}_k\to \mathfrak{S}_{k+m}$ which induces an action of $\mathfrak{S}_k$ on $\mathcal{O}[m](k)$.
    \end{defi}
    \begin{rem}
       The shifted object is actually an operad, the reader can refer to \cite[Section 4.1.5]{Fre09} for a detailed description of the composition morphisms. 
    \end{rem}
    For $k\in \mathbb{N}$, the symmetric group $\mathfrak{S}_k$ acts on an element $(p, a_1, \ldots, a_k) \in \mathcal{O}(k)\otimes A^{\otimes k}$ by $\sigma.(p, a_1, \ldots, a_k)=(\sigma.p, a_{\sigma^{-1}(1)}, \ldots, a_{\sigma^{-1}(k)})$. We set
    \[S(\mathcal{O},A):=\oplus_{k\geq 0}(\mathcal{O}(k)\otimes A^{\otimes k})_{\mathfrak{S}_k}\]
    where $(\mathcal{O}(k)\otimes A^{\otimes k})_{\mathfrak{S}_k}$ denotes the coinvariants of $\mathcal{O}(k)\otimes A^{\otimes k}$ under the action of the symmetric group given above. In particular, we have the following identification $(\sigma.p, a_1, \ldots, a_k)\equiv(p, a_{\sigma(1)}, \ldots, a_{\sigma(k)})$.
    \begin{defi}
         The \emph{enveloping algebra} $\mathcal{U}_{\mathcal{O}}(A)$ of the $\mathcal{O}$-algebra $A$ is a monoid in $\mathcal{C}$ obtained by considering a certain quotient of $S(\mathcal{O}[1], A)$. More precisely, it is spanned by formal elements $p(t, a_1, \ldots, a_k)$ with $k\in\mathbb{N}$, $a_1, \ldots, a_k\in A$, $p\in \mathcal{O}(k+1)$ and $t$ a variable with the following identification
         \[p\circ_{1+s}q(t, a_1, \ldots, a_{k+l-1})\equiv p(t, a_1, \ldots, a_{s-1}, q(a_s, \ldots, a_{s+l-1}), a_{s+l}, \ldots, a_{k+l-1})\]
         for $l\in \mathbb{N}$ and $q\in \mathcal{O}(l)$. The multiplication is given for $p\in \mathcal{O}(k+1)$ and $q\in \mathcal{O}(l+1)$ by
         \[p(t, a_1, \ldots a_k).q(t, b_1, \ldots, b_l):=p\circ_1 q(t, b_1, \ldots, b_l, a_1, \ldots, a_k).\]
    \end{defi}
    % \begin{rem}
    %     A RETIRER ? More generally, one can define the \emph{enveloping operad} (see \cite[Section 4.1]{Fre09}) associated to an algebra over an operad. The enveloping algebra actually corresponds to the arity 1 elements.  
    % \end{rem}
    For a more detailed presentation of the enveloping algebra, the reader may refer to \cite[Section 4.3]{Fre09}. The following examples are taken from that monograph.  
    \begin{exemple}
        \noindent\begin{itemize}
            \item Let $A$ be a commutative algebra. The enveloping algebra $\mathcal{U}_{Com}(\overline{A})$ is isomorphic to $A$.
            \item  Let $A$ be an associative algebra. The enveloping algebra $\mathcal{U}_{As}(\overline{A})$ is isomorphic to $A\otimes A^{op}$.        
            We just give an explicit description of the isomorphism. It is defined by
            \[\begin{array}{ccc}
                \mathcal{U}_{As}(\overline{A}) & \to &  A\otimes A^{op}\\
                 \sigma(t, a_1, \ldots, a_k)& \mapsto & \sigma(1).\sigma(2)\ldots\sigma(i-1)\otimes \sigma(i+1)\ldots\sigma(k) 
            \end{array}\]
            with $i=\sigma^{-1}(1)$. The inverse map is given for $a,b\in \overline{A}$ by
            \[\begin{array}{ccc}
                 A\otimes A^{op} & \to & \mathcal{U}_{As}(\overline{A})\\
                 1\otimes a & \mapsto & (1, 2)(t, a) \\
                 a\otimes 1 & \mapsto & (2, 1)(t, a) \\
                 a\otimes b & \mapsto & (2, 1, 3)(t, a, b).
            \end{array}\]   
        \end{itemize}
    \end{exemple}

    The enveloping algebra is actually a functorial construction. The following result can be deduced from \cite[Section 10.1.5]{Fre09}.
    \begin{proposition}\label{prop:functo_envelop}
        Let $\phi:\mathcal{O}\to \mathcal{P}$ be a morphism of operads. For any $\mathcal{P}$-algebra $A$, $\phi$ induces a morphism of $\mathcal{O}$-algebras between the enveloping algebras:
        \[\mathcal{U}_{\mathcal{O}}(\phi^\ast(A))\to \mathcal{U}_{\mathcal{P}}(A) \]
        where $\phi^\ast$ is given in Remark \ref{rem: op morph induce morph of algebras}. 
    \end{proposition}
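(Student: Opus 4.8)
The plan is to define the map $\Phi\colon \mathcal{U}_{\mathcal{O}}(\phi^\ast(A)) \to \mathcal{U}_{\mathcal{P}}(A)$ directly on the spanning generators and then verify that it is compatible with every piece of structure. On a formal element $p(t, a_1, \ldots, a_k)$, with $p \in \mathcal{O}(k+1)$ and $a_1, \ldots, a_k \in A$, I set
\[\Phi\big(p(t, a_1, \ldots, a_k)\big) := \phi(p)(t, a_1, \ldots, a_k),\]
where $\phi(p) \in \mathcal{P}(k+1)$ and the right-hand side is a generator of $\mathcal{U}_{\mathcal{P}}(A)$. At the level of the underlying objects $S(\mathcal{O}[1], A)$, this is exactly the map induced by the shifted morphism $\phi[1]\colon \mathcal{O}[1] \to \mathcal{P}[1]$, namely $\mathcal{O}[1](k) = \mathcal{O}(k+1) \xrightarrow{\phi} \mathcal{P}(k+1) = \mathcal{P}[1](k)$. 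Since $\phi$ commutes with the symmetric group actions and the $\mathfrak{S}_k$-action on $\mathcal{O}[1](k)$ is the one restricted along $\mathfrak{S}_k \hookrightarrow \mathfrak{S}_{k+1}$, the family $\phi[1]$ is a morphism of $\Sigma_\ast$-objects and therefore passes to the coinvariants defining a map $S(\mathcal{O}[1], A) \to S(\mathcal{P}[1], A)$.

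The key point is to check that $\Phi$ descends to the quotients cutting out the two enveloping algebras. This is where the two hypotheses combine: on the operadic side, $\phi$ commutes with the partial composition products, so $\phi(p \circ_{1+s} q) = \phi(p) \circ_{1+s} \phi(q)$; on the algebra side, by the very definition of the restricted structure $\phi^\ast$ (Remark \ref{rem: op morph induce morph of algebras}), the evaluation $q(a_s, \ldots, a_{s+l-1})$ taken in the $\mathcal{O}$-algebra $\phi^\ast(A)$ coincides with $\phi(q)(a_s, \ldots, a_{s+l-1})$ taken in the $\mathcal{P}$-algebra $A$. Applying $\Phi$ to the two sides of the defining identification
\[p \circ_{1+s} q(t, a_1, \ldots, a_{k+l-1}) \equiv p(t, a_1, \ldots, a_{s-1}, q(a_s, \ldots, a_{s+l-1}), a_{s+l}, \ldots, a_{k+l-1})\]
and invoking these two compatibilities shows that the images satisfy the corresponding relation in $\mathcal{U}_{\mathcal{P}}(A)$; hence $\Phi$ is well defined.

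It then remains to see that $\Phi$ is a morphism of monoids in $\mathcal{C}$, i.e.\ of associative unital algebras, compatible with the restriction functor $\phi^\ast$ as claimed. Multiplicativity follows the same pattern: applying $\Phi$ to
\[p(t, a_1, \ldots, a_k) \cdot q(t, b_1, \ldots, b_l) = p \circ_1 q(t, b_1, \ldots, b_l, a_1, \ldots, a_k)\]
and using $\phi(p \circ_1 q) = \phi(p) \circ_1 \phi(q)$ returns $\Phi\big(p(t, \ldots)\big) \cdot \Phi\big(q(t, \ldots)\big)$. Preservation of the unit is immediate, since an operad morphism sends the operadic identity $1 \in \mathcal{O}(1)$ to $1 \in \mathcal{P}(1)$. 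Naturality in $A$ and functoriality in $\phi$ can finally be read off directly from the explicit formula on generators.

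The only genuinely delicate step is the well-definedness verification above; everything else is a formal consequence of $\phi$ being a morphism of operads. In the category of chain complexes (and, later, of perverse chain complexes) one should in principle track the Koszul signs produced when permuting graded elements, but these are transported identically on the two sides because $\phi$ commutes with the symmetric group actions, so no additional sign subtleties arise.
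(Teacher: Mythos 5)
Your argument is correct. The paper itself gives no proof of this proposition — it simply defers to \cite[Section 10.1.5]{Fre09} — so there is no in-text argument to compare against; your direct verification supplies exactly the details one would want. You correctly identify the two points that matter: the map induced by $\phi[1]$ on $S(\mathcal{O}[1],A)$ passes to $\mathfrak{S}_k$-coinvariants because $\phi$ is equivariant, and it descends to the quotient defining the enveloping algebra because $\phi$ commutes with partial composites while, by the very definition of $\phi^\ast$, the evaluation of $q\in\mathcal{O}(l)$ on $\phi^\ast(A)$ equals that of $\phi(q)$ on $A$; multiplicativity and unitality then follow from $\phi(p\circ_1 q)=\phi(p)\circ_1\phi(q)$ and $\phi(1)=1$ as you say.
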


    We give two lemmas which will be useful later.
    \begin{lemma}\label{lem:mod_struc_uo_oalg}
        Let $A$ be an algebra over an operad $\mathcal{O}$. There exists a left $\mathcal{U}_{\mathcal{O}}(A)$-module structure on A which is given by
        \[\begin{array}{lccc}
            \mu:&\mathcal{U}_{\mathcal{O}}(A)\otimes A & \to & A \\
             &\sigma(t, a_1, \ldots, a_k)\otimes a& \mapsto & \sigma(a, a_1, \ldots, a_k).  
        \end{array}\]
    \end{lemma}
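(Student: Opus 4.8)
The plan is to define the action on representatives and then reduce every required property to one of the three defining axioms of an $\mathcal{O}$-algebra. On a representative $\sigma(t, a_1, \ldots, a_k)$ with $\sigma\in\mathcal{O}(k+1)$, tensored with $a\in A$, I set
\[\mu(\sigma(t, a_1, \ldots, a_k)\otimes a):=\lambda(\sigma; a, a_1, \ldots, a_k),\]
that is, I evaluate $\sigma$ on $A$ after placing $a$ in the slot formerly occupied by $t$ (slot $1$, which is why $\mathcal{U}_{\mathcal{O}}(A)$ is built from $\mathcal{O}[1]$). Since this map is assembled out of the evaluation morphisms $\lambda$, it is automatically a morphism in $\mathcal{C}$; in the chain-complex setting it is a chain map for the same reason. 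As $\mathcal{U}_{\mathcal{O}}(A)$ is a quotient of $S(\mathcal{O}[1], A)$, the first task is to show $\mu$ is independent of the chosen representative, i.e.\ that it descends through the two families of relations defining the enveloping algebra.

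For well-definedness I would check the two relations separately. First, the symmetric coinvariant relation inherited from $S(\mathcal{O}[1], A)$: a permutation $\tau\in\mathfrak{S}_k$ acts on $\mathcal{O}[1](k)=\mathcal{O}(k+1)$ through its image $\tilde\tau\in\mathfrak{S}_{k+1}$ fixing the first slot, giving the identification $(\tau.\sigma, a_1, \ldots, a_k)\equiv(\sigma, a_{\tau(1)}, \ldots, a_{\tau(k)})$. Applying $\mu(-\otimes a)$ and using the equivariance axiom of $\lambda$, together with the fact that $\tilde\tau$ fixes the $t$-slot, both sides evaluate to $\lambda(\sigma; a, a_{\tau(1)}, \ldots, a_{\tau(k)})$. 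Second, the composition identification $p\circ_{1+s}q(t, a_1, \ldots, a_{k+l-1})\equiv p(t, a_1, \ldots, a_{s-1}, q(a_s, \ldots, a_{s+l-1}), a_{s+l}, \ldots, a_{k+l-1})$: since $\circ_{1+s}$ is $\gamma(p; 1, \ldots, 1, q, 1, \ldots, 1)$ with $q$ in slot $1+s$, applying $\mu(-\otimes a)$ and invoking the associativity axiom of the algebra structure (the identity fillers being absorbed by the unit axiom) shows that the two images agree.

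It then remains to verify the left-module axioms. The unit of $\mathcal{U}_{\mathcal{O}}(A)$ is $1(t)$ with $1\in\mathcal{O}(1)$, and $\mu(1(t)\otimes a)=\lambda(1; a)=a$ by the unit axiom. For associativity of the action, take $x=p(t, a_1, \ldots, a_k)$, $y=q(t, b_1, \ldots, b_l)$ and $a\in A$. Using the multiplication $x\cdot y=p\circ_1 q(t, b_1, \ldots, b_l, a_1, \ldots, a_k)$ and writing $p\circ_1 q=\gamma(p; q, 1, \ldots, 1)$, the associativity axiom of $\lambda$ gives
\[\mu\bigl((x\cdot y)\otimes a\bigr)=\lambda\bigl(p; \lambda(q; a, b_1, \ldots, b_l), a_1, \ldots, a_k\bigr)=\mu\bigl(x\otimes\mu(y\otimes a)\bigr),\]
which is exactly $(x\cdot y)\cdot a=x\cdot(y\cdot a)$.

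I expect the only genuine difficulty to be bookkeeping rather than anything analytic: one must keep track of the convention that $t$ occupies slot $1$ (hence the shift $\mathcal{O}[1]$ and the appearance of $\circ_1$ in the product), and one must confirm that the concatenation order $(b_1, \ldots, b_l, a_1, \ldots, a_k)$ used in the product matches the nesting $p(q(a, b_1, \ldots, b_l), a_1, \ldots, a_k)$ obtained after evaluation. Once the indices are pinned down, every enveloping-algebra relation is simply the evaluated shadow of an $\mathcal{O}$-algebra axiom, so no further work is needed.
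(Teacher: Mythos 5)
Your proof is correct and follows essentially the same route as the paper: the key step in both is that $(\omega\cdot\sigma)\cdot a$ and $\omega\cdot(\sigma\cdot a)$ unwind, via $\circ_1=\gamma(-;-,1,\ldots,1)$, to the two sides of the associativity axiom for the evaluation morphisms. You additionally spell out the unit axiom and the well-definedness of the action on the quotient defining $\mathcal{U}_{\mathcal{O}}(A)$, which the paper leaves implicit; both of those checks are carried out correctly.
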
    
    \begin{figure}[!h]
    \centering
    \def\svgscale{0.5}
    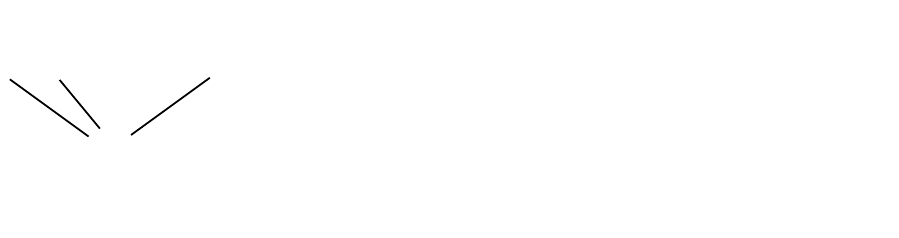
    %\caption{$\mathcal{U}_{\mathcal{O}}(A)$-module structure on $A$}
    \label{fig:module_env}
    \end{figure}
    \begin{proof}
        Let $\sigma(t, a_1, \ldots, a_k), \omega(t, b_1, \ldots, b_l)$ and $a\in A$. We want to show that
        \[(\omega(t, b_1, \ldots, b_l).\sigma(t, a_1, \ldots, a_k)).a=\omega(t, b_1, \ldots, b_l).(\sigma(t, a_1, \ldots, a_k).a).\]
        On one side, we have
        \[(\omega(t, b_1, \ldots, b_l).\sigma(t, a_1, \ldots, a_k)).a=\omega\circ_1\sigma(a, a_1, \ldots, a_k, b_1, \ldots, b_l)\]
        and on the other side 
        \[\omega(t, b_1, \ldots, b_l).(\sigma(t, a_1, \ldots, a_k).a)=\omega(\sigma(a, a_1, \ldots, a_k), b_1, \ldots, b_l).\] Hence, by the associativity of the the evaluation products, we get the identity we wanted.
    \end{proof}
    One easily proves the next lemma and deduces the corollary given below. 
    \begin{lemma}\label{lem:morph_uo_char_1}
        Let $A$ be an algebra over an operad $\mathcal{O}$. A morphism $f:\mathcal{U}_{\mathcal{O}}(A)\to A$ of $\mathcal{U}_{\mathcal{O}}(A)$-modules is characterized by the image of $1_1(t)$.
    \end{lemma}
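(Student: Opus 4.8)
The plan is to show that a module map $f:\mathcal{U}_{\mathcal{O}}(A)\to A$ is completely determined by where it sends the canonical generator $1_1(t)$, which corresponds to the operadic identity regarded as an element of the shifted object $\mathcal{O}[1](0)=\mathcal{O}(1)$. The key structural input is Lemma \ref{lem:mod_struc_uo_oalg}: the left module action of $\mathcal{U}_{\mathcal{O}}(A)$ on $A$ sends $\sigma(t,a_1,\ldots,a_k)\otimes a$ to the evaluation $\sigma(a,a_1,\ldots,a_k)$. I would combine this with the observation that every generator $\sigma(t,a_1,\ldots,a_k)$ of $\mathcal{U}_{\mathcal{O}}(A)$ can be written as the result of the module action applied to $1_1(t)$, so that $f$-equivariance forces its value.

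Concretely, first I would note that $1_1(t)$ is the unit of the monoid $\mathcal{U}_{\mathcal{O}}(A)$: indeed $1_1\in\mathcal{O}(1)$ is the operadic identity viewed in arity $1$ of the shifted object, and the multiplication formula gives $p(t,a_1,\ldots,a_k)\cdot 1_1(t)=p\circ_1 1_1(t,a_1,\ldots,a_k)=p(t,a_1,\ldots,a_k)$. Hence for any $u=\sigma(t,a_1,\ldots,a_k)\in\mathcal{U}_{\mathcal{O}}(A)$ we have $u=u\cdot 1_1(t)$. Applying the module-morphism property of $f$, which says $f(u\cdot v)=u\cdot f(v)$ for the actions on $\mathcal{U}_{\mathcal{O}}(A)$ and on $A$ respectively, I get
\[
f(u)=f\bigl(u\cdot 1_1(t)\bigr)=u\cdot f\bigl(1_1(t)\bigr).
\]
The right-hand side is entirely governed by the module action of $u$ on the fixed element $f(1_1(t))\in A$, so $f$ is determined by the single value $f(1_1(t))$, proving the uniqueness direction.

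To make this rigorous I would spell out that $u\cdot f(1_1(t))$ is computed via the formula of Lemma \ref{lem:mod_struc_uo_oalg}: writing $a_0:=f(1_1(t))\in A$, we have $f(\sigma(t,a_1,\ldots,a_k))=\sigma(a_0,a_1,\ldots,a_k)$, an explicit evaluation of the operad on $A$. For completeness one can also remark that the assignment $1_1(t)\mapsto a_0$ does extend to a well-defined module map for any choice of $a_0$, by using that $\sigma\mapsto\sigma(a_0,a_1,\ldots,a_k)$ respects both the defining relations of $\mathcal{U}_{\mathcal{O}}(A)$ (these follow from operadic associativity, exactly as in the proof of Lemma \ref{lem:mod_struc_uo_oalg}) and the module action; this gives a bijection between $\mathcal{U}_{\mathcal{O}}(A)$-module maps $\mathcal{U}_{\mathcal{O}}(A)\to A$ and elements of $A$.

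The only genuinely delicate point is verifying that the candidate map $\sigma(t,a_1,\ldots,a_k)\mapsto\sigma(a_0,a_1,\ldots,a_k)$ is well defined on the quotient, i.e.\ that it is compatible with the identification $p\circ_{1+s}q(t,\ldots)\equiv p(t,\ldots,q(a_s,\ldots),\ldots)$ used to build $\mathcal{U}_{\mathcal{O}}(A)$. This is the main obstacle, but it is resolved by the associativity axiom for the evaluation morphisms $\lambda:\mathcal{O}(k)\otimes A^{\otimes k}\to A$, precisely the commuting square appearing in the definition of an algebra over $\mathcal{O}$; the computation is formally identical to the associativity check already carried out in the proof of Lemma \ref{lem:mod_struc_uo_oalg}, so I would simply invoke it rather than repeat it.
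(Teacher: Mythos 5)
Your proof is correct and is exactly the argument the paper leaves implicit (the paper only says "one easily proves the next lemma"): since $1_1(t)$ is the unit of the monoid $\mathcal{U}_{\mathcal{O}}(A)$, module-equivariance forces $f(u)=u\cdot f(1_1(t))$, which by Lemma \ref{lem:mod_struc_uo_oalg} gives the explicit formula $\sigma(t,a_1,\ldots,a_k)\mapsto\sigma(f(1_1(t)),a_1,\ldots,a_k)$ appearing in Corollary \ref{coro:morph_env_alg}. Your remarks on well-definedness of the converse assignment via operadic associativity are also accurate and consistent with the paper's construction of the enveloping algebra.
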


    \begin{corollaire}\label{coro:morph_env_alg}
    Let $A$ be an algebra over an operad $\mathcal{O}$. The map
        \[\begin{array}{ccc}
            \mathcal{U}_{\mathcal{O}}(A) & \to & A \\
             \sigma(t, a_1, \ldots, a_k)& \mapsto & \sigma(1_A, a_1, \ldots, a_k).  
        \end{array}\]
    is a morphism of $\mathcal{U}_{\mathcal{O}}(A)$-modules.
    \end{corollaire}
    \subsection{The Barratt-Eccles operad}\label{subsec:BE}
        We now present the \emph{Barratt-Eccles operad} $\mathcal{E}_+$ (unitary version). In \cite{BE74}, in order to study loop spaces, Barratt and Eccles introduced a simplicial operad. By taking the normalized cochain complex associated to this simplicial operad, Berger and Fresse \cite{BF04} define the Barratt-Eccles operad. The reader can also consult \cite[Appendix A]{FG20} for a brief presentation of this operad and its properties. 
        
        \begin{defi}
        For $k\in\mathbb{N}$, we denote by $\mathcal{E}_+(k)$ the chain complex whose degree $l\in\mathbb{N}$ component $\mathcal{E}_+(k)_l$ is the $\mathbb{F}$-vector space generated by elements $\sigma_0\otimes \ldots\otimes \sigma_l$ where $\sigma_i\in \mathfrak{S}_k$ for $0\leq i \leq l$ and such that two consecutive permutations are not equal. Its differential is given by
        \[d(\sigma_0\otimes \ldots\otimes \sigma_l)=\sum_{i=0}^l (-1)^i \sigma_0\otimes \ldots\otimes \sigma_{i-1} \otimes \sigma_{i+1}\ldots\otimes \sigma_l.\]
        The action of a permutation $\sigma\in \mathfrak{S}_k$ on $\sigma_0\otimes \ldots\otimes \sigma_l$ is given by
        \[\sigma.(\sigma_0\otimes \ldots\otimes \sigma_l)=\sigma.\sigma_0\otimes \ldots\otimes \sigma.\sigma_l.\]
        The \emph{non-unitary Barratt-Eccles operad} $\mathcal{E}$ is defined by taking the trivial chain complex for $\mathcal{E}(0)$.
        
        The composition products are induced by those given on the permutation operad, details can be found in \cite[Section 1.1]{BF04}. For $i\in\mathbb{N}$, the $i$-partial composition product of $\sigma=(\sigma_0, \ldots, \sigma_d)\in\mathcal{E}_+(k)_d$ and $\tau=(\tau_0, \ldots, \tau_e)\in\mathcal{E}_+(l)_e$ is an element in $\mathcal{E}_+(k+l-1)_{d+e}$ given by
        \[\sigma\circ_i\tau:=\sum_{(x_\ast, y_\ast)}\pm (\sigma_{x_0}\circ_i \tau_{y_0}, \ldots, \sigma_{x_{d+e}}\circ_i \tau_{y_{d+e}})\]
        where the sum ranges over the set of paths from $(0,0)$ to $(d,e)$ in a $d\times e$ diagram. The sign associated to a path $\gamma$ in a $d\times e$ diagram is just the signature of the permutation in $\mathfrak{S}_{d+e-1}$ which sends $\gamma$ to the path
        \[((0,0), (1,0), \ldots, (d, 0), (d, 1), \ldots, (d, e)).\]
        \end{defi}
    One can show that there exists a quasi-isomorphism between the Barratt-Eccles operad $\mathcal{E}$ and the commutative operad $Com$. Actually, $\mathcal{E}$ is an example of an \emph{$E_\infty$-operad}, see \cite[Section 1.1]{BF04} for more information. Recall that there is a model category structure for $\Sigma_\ast$-modules (see Remark \ref{rem:sym_objects}).
    \begin{defi}\label{def:E_inf_op}
        An \emph{$E_\infty$-operad} is an operad which is a cofibrant approximation of $Com$ in the category of $\Sigma_\ast$-modules, more precisely, it is $\Sigma_\ast$-cofibrant operad $E$ endowed with an acyclic fibration $E\xtwoheadrightarrow{\simeq} Com$. An \emph{$E_\infty$-algebra} is an algebra over some $E_\infty$-operad.
    \end{defi}
    \subsubsection{Algebras over the Barratt-Eccles operad}
    The following result is proved in Berger and Fresse's article \cite{BF04}.
    \begin{thm}[{\cite[Theorem 2.1.1]{BF04}}]
        Let $K$ be a simplicial set. The normalized cochain complex associated to $K$, $N^\ast(K)$ is endowed with an $\mathcal{E}_+$-algebra structure that is functorial in $K$.
    \end{thm}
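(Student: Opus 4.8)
The plan is to obtain the $\mathcal{E}_+$-algebra structure on $N^\ast(K)$ by dualizing a coalgebra structure on normalized chains. Writing $N_\ast(K)$ for the normalized chain complex, so that $N^\ast(K)=N_\ast(K)^\vee$ componentwise, I would first construct for each $k$ a natural coaction
\[\rho_k: \mathcal{E}(k)\otimes N_\ast(K)\to N_\ast(K)^{\otimes k}\]
and then dualize. Since $\mathcal{E}(k)$ is of finite type (finitely many permutations in each degree), dualizing in the $N_\ast$-variable converts $\rho_k$ into an evaluation map $\mathcal{E}(k)\otimes N^\ast(K)^{\otimes k}\to N^\ast(K)$, and verifying the operad axioms for $\rho_k$ is equivalent to verifying them for this evaluation map. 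The passage from $\mathcal{E}$ to the unitary operad $\mathcal{E}_+$ then amounts only to adjoining the unit cochain $1\in N^0(K)$ (the constant function $1$ on $0$-simplices) as the image of $\mathcal{E}_+(0)=\mathbb{F}$.

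Next I would reduce the construction of $\rho_k$ to the representable case. The functor $N_\ast$ preserves colimits and every simplicial set is a colimit of standard simplices $\Delta^n$, so a natural transformation $\rho_k$ is determined by its values on $N_\ast(\Delta^n)$, and in fact by the images of the generators $w\otimes\iota_n$, where $w=(\sigma_0,\ldots,\sigma_l)$ is a basis element of $\mathcal{E}(k)_l$ and $\iota_n$ is the fundamental chain of $\Delta^n$. The task therefore becomes: assign to each such $w$ and each $n$ an explicit chain in $\bigl(N_\ast(\Delta^n)^{\otimes k}\bigr)_{n+l}$, compatibly in $n$ and $l$. I emphasize that this reduction is available precisely because $N_\ast$ is covariant and colimit-preserving, whereas the contravariant $N^\ast$ sends colimits to limits.

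The heart of the matter is to produce these chains. Here I would use the interplay between the permutation data recorded by $\mathcal{E}$ and the Alexander--Whitney and Eilenberg--Zilber diagonals. A sequence $(\sigma_0,\ldots,\sigma_l)$ is read as a staircase of comparisons between $k$-fold orderings of the vertices of $\Delta^n$, from which one extracts an iterated higher diagonal approximation (the multivariable generalization of Steenrod's $\cup_i$-products, equivalently the action of the surjection operad pulled back along Berger--Fresse's table-reduction morphism from $\mathcal{E}$ to the surjection operad in \cite{BF04}). The normalization is automatically compatible: two consecutive equal permutations yield a degenerate contribution, which is killed in $N_\ast$, matching exactly the relation ``two consecutive permutations are not equal'' built into the definition of $\mathcal{E}_+(k)$.

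Finally I would check the operad axioms for $\rho_k$: that it is a chain map (compatibility of the differential of $\mathcal{E}$ with the tensor differential on $N_\ast(K)^{\otimes k}$), that it is $\mathfrak{S}_k$-equivariant, and that it respects the partial compositions $\circ_i$ and the identity $1\in\mathcal{E}(1)$. Equivariance and the unit are immediate from the construction, and the chain-map property is a direct sign computation. The genuinely hard step is compatibility with operadic composition: one must show that the iterated diagonal attached to $\gamma(w;w_1,\ldots,w_k)$ coincides on the nose with the composite of the operations attached to $w$ and to $w_1,\ldots,w_k$. Conceptually this is controlled by acyclic models --- for fixed arity the functors $\mathcal{E}(k)\otimes N_\ast(\Delta^\bullet)$ and $N_\ast(\Delta^\bullet)^{\otimes k}$ are free on the simplices and acyclic, so comparison maps are unique up to natural chain homotopy --- but upgrading these coherences to the strict equalities required for an honest operad action forces one to work with the explicit interval-cut formulas. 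The sign bookkeeping in the composition formula is where essentially all the effort lies, and it is exactly this that Berger--Fresse's table-reduction morphism is engineered to handle. Functoriality in $K$ is then automatic, since every map used in the construction is natural.
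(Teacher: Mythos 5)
The paper does not prove this statement: it is quoted directly from Berger--Fresse \cite[Theorem 2.1.1]{BF04}, and your sketch correctly reconstructs the strategy of that cited proof --- an $\mathcal{E}$-coalgebra structure on normalized chains, built via the table-reduction morphism to the surjection operad and the interval-cut operations, then dualized to obtain the $\mathcal{E}_+$-algebra structure on cochains. One minor quibble: the finite-type property of $\mathcal{E}(k)$ is not what makes the dualization work, since one only needs the canonical map $N^\ast(K)^{\otimes k}\to \bigl(N_\ast(K)^{\otimes k}\bigr)^\vee$, which exists unconditionally; this does not affect the argument.
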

    We get the following corollary by considering the chain map between the normalized cochain complex of a topological space and its singular cochain complex.
    \begin{corollaire}\label{coro:BE alg on norm cochains}
        Let $\mathcal{X}$ be a topological space. The singular cochain complex associated to $\mathcal{X}$, $C^\ast(\mathcal{X}; \mathbb{F})$ is endowed with an $\mathcal{E}_+$-algebra structure.
    \end{corollaire}
    We will need the next proposition to lift Poincaré duality in the derived category. 
    \begin{proposition}\label{prop:algBE_envalg}
    Let $\mathcal{C}$ be a symmetric monoidal category over $Ch(\mathbb{F})$. If an object $A$ in $\mathcal{C}$ is an $\mathcal{E}_+$-algebra then
    \begin{enumerate}[label=\roman*)]         
        \item we have a morphism $\env(\overline{A})\twoheadrightarrow A$ of $\env(\overline{A})$-modules,
        \item there is a morphism $A\otimes A^{op}\to \env(\overline{A})$,
        \item and there exists an isomorphism $\env(\overline{A}) \simeq \env(\overline{A})^{op}$
    \end{enumerate}
    where the last two morphisms are morphisms of $As$-algebras in $\mathcal{C}$. Furthermore, if $\mathcal{C}=Ch(\mathbb{F})$ or $\mathcal{C}=Ch(\mathbb{F})^{\pGM}$, the morphism of i) is a quasi-isomorphism. 
    \end{proposition}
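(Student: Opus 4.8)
The plan is to take $\env(\overline A)=\mathcal{U}_{\mathcal{E}}(\overline A)$, the enveloping algebra over the non-unitary Barratt--Eccles operad $\mathcal{E}$, and to extract i)--iii) from three features of $\mathcal{E}$: the unit $\mathcal{E}(1)=\mathbb{F}$, the degree-zero inclusion of the associative operad, and the quasi-isomorphism $\mathcal{E}\xrightarrow{\simeq}Com$. For i), since $A$ is a unital $\mathcal{E}_+$-algebra, $\overline A$ is an $\mathcal{E}$-algebra and $A$ inherits a left $\mathcal{U}_{\mathcal{E}}(\overline A)$-module structure defined exactly as in Lemma \ref{lem:mod_struc_uo_oalg}, namely $\sigma(t,a_1,\ldots,a_k)\cdot a:=\sigma(a,a_1,\ldots,a_k)$ via the $\mathcal{E}_+$-action; well-definedness against the defining relations and associativity follow from the associativity of the evaluation products as in that lemma. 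Evaluating this action at the unit, $\sigma(t,a_1,\ldots,a_k)\mapsto\sigma(1_A,a_1,\ldots,a_k)$, is then a morphism of $\mathcal{U}_{\mathcal{E}}(\overline A)$-modules by Corollary \ref{coro:morph_env_alg}. It is surjective: taking $\sigma$ to be the binary product in $\mathcal{E}(2)_0$ gives $1_A\cdot a=a$ for every $a\in\overline A$, while $1_1(t)\mapsto 1_A$, and $A=\mathbb{F}\cdot 1_A\oplus\overline A$.

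For ii) and iii) I would use the structure in homological degree $0$ together with a symmetry. The degree-zero part of $\mathcal{E}$ is the linearized permutation operad, so there is an operad morphism $\iota\colon As\to\mathcal{E}$; regarding $\overline A$ as an associative algebra through $\iota$, Proposition \ref{prop:functo_envelop} provides a morphism of $As$-algebras $\mathcal{U}_{As}(\overline A)\to\mathcal{U}_{\mathcal{E}}(\overline A)$, and precomposing it with the isomorphism $A\otimes A^{op}\cong\mathcal{U}_{As}(\overline A)$ recalled above yields ii). For iii) I would construct an anti-automorphism of $\mathcal{U}_{\mathcal{E}}(\overline A)$ directly on generators, of the form $\sigma(t,a_1,\ldots,a_k)\mapsto(w\cdot\sigma)(t,a_k,\ldots,a_1)$ with $w$ the order-reversing permutation of $\mathfrak{S}_{k+1}$, and check that it respects the defining relations and sends the product $p\circ_1 q$ to $q\circ_1 p$ by the equivariance of the composition products; this is an isomorphism $\env(\overline A)\cong\env(\overline A)^{op}$ of $As$-algebras refining the swap $A\otimes A^{op}\cong A^{op}\otimes A$ of the associative case. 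All of i)--iii) are formal and therefore hold in any symmetric monoidal category $\mathcal{C}$ over $Ch(\mathbb{F})$.

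The main obstacle is the last assertion, that the morphism of i) is a quasi-isomorphism when $\mathcal{C}=Ch(\mathbb{F})$ (and, by the same argument applied componentwise, when $\mathcal{C}=Ch(\mathbb{F})^{\pGM}$). Here I would filter $\mathcal{U}_{\mathcal{E}}(\overline A)$ by the number of inputs taken from $\overline A$. Since $\mathcal{E}(1)=\mathbb{F}$, the defining relation $p\circ_{1+s}q\equiv p(t,\ldots,q(\ldots),\ldots)$ strictly lowers this weight whenever $q$ has arity at least $2$, so on the associated graded the decomposable operations in the non-$t$ slots are annihilated and $\mathrm{gr}_w\mathcal{U}_{\mathcal{E}}(\overline A)$ is the quotient of $(\mathcal{E}(w+1)\otimes\overline A^{\otimes w})_{\mathfrak{S}_w}$ by these decomposables. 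The comparison map respects the filtration and induces an isomorphism on $\mathrm{gr}_0=\mathbb{F}$ and, through $\mathcal{E}(2)\xrightarrow{\simeq}\mathbb{F}$, a quasi-isomorphism $\mathrm{gr}_1=\mathcal{E}(2)\otimes\overline A\xrightarrow{\simeq}\overline A$, while $A$ itself is concentrated in weights $0$ and $1$; so the whole statement reduces to the acyclicity of the higher pieces $\mathrm{gr}_w$ for $w\geq 2$. This is exactly where the $E_\infty$-structure enters: these indecomposable quotients are the derived analogues of $Com(w+1)/(\text{decomposables})=0$, and their acyclicity I would obtain by resolving $\overline A$ by free $\mathcal{E}$-algebras, on which $\mathcal{U}_{\mathcal{E}}$ is explicitly computable, and transporting the vanishing along $\mathcal{E}\xrightarrow{\simeq}Com$, using that $\mathcal{E}$ is $\Sigma_\ast$-cofibrant (each $\mathcal{E}(n)$ being a free $\mathbb{F}[\mathfrak{S}_n]$-resolution of the trivial module). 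The delicate point, and the reason the argument must pass through the cofibrant model $\mathcal{E}$ rather than computing $\mathfrak{S}_w$-coinvariants on $Com$ directly, is that $Com$ fails to be $\Sigma_\ast$-cofibrant in positive characteristic, so the vanishing of the higher weight pieces is only visible through the $E_\infty$-resolution.
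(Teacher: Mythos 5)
Your i) and ii) coincide with the paper's proof (the same section $g(a)=(1,2)(t,a)$ of $f$, the same degree-zero inclusion $As\to\mathcal{E}$ fed into Proposition \ref{prop:functo_envelop}). For iii), your strategy --- an explicit anti-automorphism induced by an operation on the permutation operad, extended degreewise to $\mathcal{E}$ --- is also the paper's, but the formula you give is not the right one. The operation used in the paper is not order reversal but the cyclic rotation about the slot of the module variable, $\sigma^{op}=(\sigma(i+1),\ldots,\sigma(k),1,\sigma(1),\ldots,\sigma(i-1))$ with $i=\sigma^{-1}(1)$, and the key identity is $(\sigma\circ_1\omega)^{op}=\omega^{op}\circ_1\sigma^{op}$. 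Your map $\sigma(t,a_1,\ldots,a_k)\mapsto(w\cdot\sigma)(t,a_k,\ldots,a_1)$ does not restrict to the swap on $\mathcal{U}_{As}(\overline{A})\simeq A\otimes A^{op}$: applied to the generator $a\otimes b=(2,1,3)(t,a,b)$ it yields, after rewriting in the $\mathfrak{S}_2$-coinvariants, the element $(3,2,1)(t,a,b)$, which corresponds to $ba\otimes 1$ rather than $b\otimes a$. Conceptually, full reversal of the word $u\,t\,v$ produces $\overline{v}\,t\,\overline{u}$ with the internal order of each factor also reversed, which is not the swap $u\otimes v\mapsto \pm v\otimes u$ since $A$ is not strictly commutative; the rotation, not the reversal, is what refines $T$.

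For the quasi-isomorphism in i), the filtration by the number of $\overline{A}$-inputs leaves the two hard points open. First, the identification of $\mathrm{gr}_w$ with $(\mathcal{E}(w+1)\otimes\overline{A}^{\otimes w})_{\mathfrak{S}_w}$ modulo decomposables is asserted rather than proved: $\env(\overline{A})$ is presented by relations that mix weights (it is a reflexive coequalizer $S(\mathcal{E}[1],S(\mathcal{E}_+,A))\rightrightarrows S(\mathcal{E}[1],A)$, not a free module modulo a visible submodule), so computing $F_w/F_{w-1}$ requires a PBW-type control of the presentation, and if $\mathrm{gr}_w$ were only a further quotient of the complex you describe, acyclicity of the latter would not give acyclicity of $\mathrm{gr}_w$. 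Second, and decisively, the acyclicity of $\mathrm{gr}_w$ for $w\geq 2$ is the entire content of the statement and cannot be obtained by transporting $Com(w+1)/(\text{decomposables})=0$ along $\mathcal{E}\to Com$: already for the square-zero algebra $A=\mathbb{F}\oplus V$ with $V=\mathbb{F}$ in degree $0$ and $\mathbb{F}$ of characteristic $2$, the complex $(\mathcal{E}(w+1)\otimes V^{\otimes w})_{\mathfrak{S}_w}$ has homology $H_\ast(\mathfrak{S}_w;\mathbb{F})$, nonzero in every degree, so what must be shown is that the subcomplex of decomposables carries all of this homology --- a genuine homological fact, invisible at the level of the associated graded. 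Your fallback, resolving $\overline{A}$ by free $\mathcal{E}_+$-algebras where $\mathcal{U}_{\mathcal{E}}$ is computable, is precisely the paper's proof: there $f$ and $g$ are induced arity-wise by $\mathfrak{S}_k$-equivariant maps $F_k:\mathcal{E}(k+1)\to\mathcal{E}(k)$ and $G_k:\mathcal{E}(k)\to\mathcal{E}(k+1)$ with $F_kG_k=\mathrm{id}$ between contractible complexes of free $\mathbb{F}[\mathfrak{S}_k]$-modules, an equivariant homotopy $G_kF_k\simeq\mathrm{id}$ is lifted to $\env(\overline{A})$, and the general case follows by writing $A$ as a coequalizer of free $\mathcal{E}_+$-algebras. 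Once that is carried out the filtration is redundant; without it, the filtration proves nothing.
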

    Before giving the proof, let's quickly mention a corollary. Let $\mathcal{X}$ be a topological space, we set $E(\mathcal{X}):=\env(\overline{C^\ast(\mathcal{X}; \mathbb{F})})$. This defines a contravariant functor from topological spaces to the category of DGAs. The functoriality of $E$ is deduced from the functoriality of the singular cochain complex and the enveloping algebra. By applying the previous proposition, we get the following result. 
    \begin{corollaire}\label{coro:algBE_cochain_envalg}
    Let $\mathbb{F}$ be a field. There exists a functor $E: Top^{op} \to DGA$ such that for any topological space $\mathcal{X}$ the following properties are verified:
    \begin{enumerate}[label=\roman*)]
        \item we have a quasi-isomorphism of left $E(\mathcal{X})$-modules $E(\mathcal{X})\xtwoheadrightarrow{\simeq} C^\ast(\mathcal{X}; \mathbb{F})$,            
        \item there is a DGA morphism $C^\ast(\mathcal{X}; \mathbb{F})\otimes C^\ast(\mathcal{X}; \mathbb{F})^{op}\to E(\mathcal{X})$,
        \item and there exists an isomorphism of DGA $E(\mathcal{X}) \simeq E(\mathcal{X})^{op}$.
    \end{enumerate}
    Furthermore, these morphisms induce natural transformations. 
    \end{corollaire}

    \begin{proof}[Proof of Proposition \ref*{prop:algBE_envalg}]
        The map of i) is given by
        \[\begin{array}{lccc}
        f:&\env(\overline{A}) & \to & A \\
         ~&\sigma(t, a_1, \ldots, a_k)& \mapsto & \sigma(1_A, a_1, \ldots, a_k). 
        \end{array}\]
        It is a morphism of left $\env(\overline{A})$-modules by corollary \ref{coro:morph_env_alg}. We can see that $f$ admits a right inverse 
        \[\begin{array}{lccc}
        g:& A & \to &\mathcal{U}_{\mathcal{E}}(\overline{A}) \\
         ~&a& \mapsto & (1,2)(t,a). 
        \end{array}\]
        We're going to show that $g \circ f$ and $\id_{\env(\overline{A})}$ are chain homotopic when $\mathcal{C}=Ch(R)$. We first study the case of free $\mathcal{E}_+$-algebras. We suppose that $A=\mathcal{E}_+(V):=S(\mathcal{E}_+, V)$ for some chain complex $V$ and with $\mathcal{E}_+(V):=\oplus_{k\geq 0}(\mathcal{E}_+(k)\otimes V^{\otimes k})_{\mathfrak{S}_k}=\mathcal{E}(0)\oplus\mathcal{E}(V)$. Note that $1_A$, the unit of $A$, is identified with an element in $\mathcal{E}(0)$. Furthermore, we have $\overline{A}=\mathcal{E}(V)$ and $\env(\overline{A})=S(\mathcal{E}[1], V)$. If $\sigma\in \mathcal{E}(k+1)$, we have $f(\sigma(t, a_1, \ldots, a_k))=F_k(\sigma)(a_1, \ldots, a_k)$ with $F_k$ being the composite 
        \[\mathcal{E}(k+1)\hookrightarrow \mathcal{E}_+(k+1)\otimes \mathcal{E}_+(0)\otimes \mathcal{E}_+(1)\otimes \ldots \otimes \mathcal{E}_+(1) \xrightarrow{\gamma} \mathcal{E}_+(k)=\mathcal{E}(k)\]
        where $\gamma$ refers to the composition product in $\mathcal{E}_+$. Note that $F$ is $\mathfrak{S}_k$-equivariant. 
        
        Let $a=\sigma(v_1, \ldots, v_k)\in A$, note that we actually mean the class of elements which are identified with $\sigma(v_1, \ldots, v_k)$ under the action of $\mathfrak{S}_k$. We have $g(a)=(1, 2)(t, \sigma(v_1, \ldots, v_k))=((1, 2)\circ_2 \sigma )(t, v_1, \ldots v_k)$ using the equivariance relations in the envelopping algebra. Hence, $g$ is actually induced by the composition products in $\mathcal{E}$, we have $g(\sigma(v_1, \ldots, v_k))=G_k(\sigma)(t, v_1, \ldots, v_k)$ where 
        \[
        \begin{array}{lccc}
             G_k:& \mathcal{E}(k) &\to& \mathcal{E}(k+1)  \\
             ~&  \sigma &\mapsto& (1, 2)\circ_2 \sigma.
        \end{array}
        \]
        This map is also $\mathfrak{S}_k$-equivariant. We have $F_k\circ G_k=\id_{\mathcal{E}(k)}$ and $F_k$ and $G_k$ are chain maps since they are induced by composition products. The complex $\mathcal{E}(k)$ has trivial homology modules since it is the bar construction of $\mathfrak{S}(k)$ (we can present a contracting homotopy). We deduce that $F_k$ and $G_k$ are quasi-isomorphisms between degree-wise free chain complexes and hence, $G_k\circ F_k$ is chain homotopic to $\id_{\mathcal{E}(k+1)}$. We have a chain homotopy $H:\mathcal{E}(k+1)\to \mathcal{E}(k+1)$ (of degree +1) such that
        \[d_{\mathcal{E}(k+1)}\circ H + H\circ d_{\mathcal{E}(k+1)} = G_k\circ F_k - \id_{\mathcal{E}(k+1)}.\]
         Since $G_k\circ F_k$ is $\mathfrak{S}_k$-equivariant, we can lift this $H$ into a map $h:\env(\overline{A})\to \env(\overline{A})$ where $h(\sigma(t, a_1, \ldots, a_k)):=H(\sigma)(t, a_1, \ldots, a_k)$ for $\sigma(t, a_1, \ldots, a_k)\in\env(\overline{A})$. From the above given equality, we get that
         \begin{multline}\label{eq1}
             d_{\mathcal{E}(k+1)}\circ H(\sigma)(t, a_1,\ldots, a_k) + H\circ d_{\mathcal{E}(k+1)}(\sigma)(t, a_1,\ldots, a_k) \\ = G_k\circ F_k(\sigma)(t, a_1,\ldots, a_k) - \sigma(t, a_1,\ldots, a_k).
         \end{multline}
        The differential of $\env(\overline{A})$ is given by the differential of a tensor product of chain complexes. For $\sigma(t, a_1, \ldots, a_k)\in\env(\overline{A})$, we have
        \[d_{\env(\overline{A})}(\sigma(t, a_1, \ldots, a_k))= d_{\mathcal{E}(k+1)}(\sigma)(t, a_1, \ldots, a_k)+ (-1)^{\Real{\sigma}}\sigma\left(d_{A^{\otimes k}}(t, a_1,\ldots, a_k)\right)\]
        where $d_{A^{\otimes k}}(t, a_1,\ldots, a_k)=\sum_{i=1}^k (-1)^{\eps_i} (t, a_1, \ldots, d_A(a_i),\ldots, a_k)$ with $\eps_i=\sum_{j=1}^i \Real{a_j}$. Note that we have
        \begin{multline}\label{eq2}
            d_{\env(\overline{A})}(H(\sigma)(t, a_1, \ldots, a_k))+h(d_{\env(\overline{A})}(\sigma(t, a_1, \ldots, a_k)))\\= d_{\mathcal{E}(k+1)}\circ H(\sigma)(t, a_1, \ldots, a_k)+h(d_{\mathcal{E}(k+1)}(\sigma)(t, a_1, \ldots, a_k))
        \end{multline}
        since
        \[(-1)^{\Real{H}+\Real{\sigma}}H(\sigma)\left(d_{A^{\otimes k}}(t, a_1,\ldots, a_k)\right)=(-1)^{1+\Real{\sigma}}h\left(\sigma(d_{A^{\otimes k}}(t, a_1,\ldots, a_k))\right).\]
        Finally, using (\ref{eq1}) and (\ref{eq2}), we get that $d_{\env(\overline{A})}\circ h + h\circ d_{\env(\overline{A})} = g\circ f - \id_{{\env(\overline{A})}}$. This proves that $f$ and $g$ are chain homotopy equivalences, whence they are quasi-isomorphisms. We now explain why this results holds for any $\mathcal{E}_+$-algebra (see \cite[Chapter 4]{Fre09}). Note that an $\mathcal{E}_+$-algebra $A$ can be obtained as the coequalizer of free $\mathcal{E}_+$-algebras. We have the following commutative diagram
        % https://q.uiver.app/#q=WzAsNixbMCwwLCJcXG1hdGhjYWx7RX1fKyhTKFxcbWF0aGNhbHtFfV8rLEEpKSJdLFsxLDAsIlxcbWF0aGNhbHtFfV8rKEEpIl0sWzIsMCwiQSJdLFsyLDEsIlxcbWF0aGNhbHtVfV97XFxtYXRoY2Fse0V9fShcXG92ZXJsaW5le0F9KSJdLFsxLDEsIlMoXFxtYXRoY2Fse0V9WzFdLEEpIl0sWzAsMSwiUyhTKFxcbWF0aGNhbHtFfVsxXSxBKSxBKSJdLFswLDEsIiIsMCx7Im9mZnNldCI6LTF9XSxbMCwxLCIiLDIseyJvZmZzZXQiOjF9XSxbMSwyXSxbNSw0LCIiLDIseyJvZmZzZXQiOi0xfV0sWzUsNCwiIiwwLHsib2Zmc2V0IjoxfV0sWzQsM10sWzMsMiwiRiIsMix7Im9mZnNldCI6MSwic3R5bGUiOnsiYm9keSI6eyJuYW1lIjoiZGFzaGVkIn19fV0sWzQsMSwiRl8xIiwyLHsib2Zmc2V0IjoxfV0sWzUsMCwiRl8yIiwyLHsib2Zmc2V0IjoxfV0sWzAsNSwiR18yIiwyLHsib2Zmc2V0IjoxfV0sWzEsNCwiR18xIiwyLHsib2Zmc2V0IjoxfV0sWzIsMywiRyIsMix7Im9mZnNldCI6MSwic3R5bGUiOnsiYm9keSI6eyJuYW1lIjoiZGFzaGVkIn19fV1d
        \[\begin{tikzcd}
        	{\mathcal{E}_+(S(\mathcal{E}_+,A))} & {\mathcal{E}_+(A)} & A \\
        	{S(S(\mathcal{E}[1],A),A)} & {S(\mathcal{E}[1],A)} & {\mathcal{U}_{\mathcal{E}}(\overline{A})}
        	\arrow[shift left, from=1-1, to=1-2]
        	\arrow[shift right, from=1-1, to=1-2]
        	\arrow[from=1-2, to=1-3]
        	\arrow[shift left, from=2-1, to=2-2]
        	\arrow[shift right, from=2-1, to=2-2]
        	\arrow[from=2-2, to=2-3]
        	\arrow["f"', shift right, dashed, from=2-3, to=1-3]
        	\arrow["{f_1}"', shift right, from=2-2, to=1-2]
        	\arrow["{f_2}"', shift right, from=2-1, to=1-1]
        	\arrow["{g_2}"', shift right, from=1-1, to=2-1]
        	\arrow["{g_1}"', shift right, from=1-2, to=2-2]
        	\arrow["g"', shift right, dashed, from=1-3, to=2-3]
        \end{tikzcd}\]
        where for $i=1,2$, $f_i$ and $g_i$ are chain homotopy equivalences. Thinking of $A$ and $\env(\overline{A})$ as colimits, we get morphisms $f$ and $g$ and we can check that they are also chain homotopy equivalences. We'll deal with the case $\mathcal{C}=Ch(\mathbb{F})^{\pGM}$ in subsection \ref{subsec:thm_BV_Hoch_bup}.
        
        For ii), notice that for any $k\in\mathbb{N}$, $As(k)$ is isomorphic to the degree 0 component of $\mathcal{E}(k)$. This induces a morphism of operads $As\to \mathcal{E}$. Thus, by proposition \ref{prop:functo_envelop}, we have a morphism $A\otimes A^{op} \simeq \mathcal{U}_{As}(\overline{A})\to \env(\overline{A})$. 
        
        The last point is proved in the next subsection. 
    \end{proof}
    \begin{rem}
        In \cite[Appendix A]{FG20}, Fresse and Guerra proved that we have a quasi-isomorphism $A\vee B\to A\otimes B$ between the coproduct and the tensor product of two $\mathcal{E}_+$-algebras in $Ch(\mathbb{F})$. Point i) can also be deduced from their result. 
    \end{rem}
        \subsubsection{The mirror morphism}    
    Let $(\mathcal{C}, \otimes, \mathcal{I})$ be a symmetric monoidal category over $Ch(\mathbb{F})$. We will denote by $As$ the associative operad in $Ch(\mathbb{F})$. Notice that if $A$, an object in $\mathcal{C}$, is an algebra over $As$ then, we have isomorphisms of algebras (monoids in $\mathcal{C}$)
    \[\mathcal{U}_{As}(A) \simeq A_+\otimes A_+^{op} \simeq  A_+^{op}\otimes A_+ \simeq \mathcal{U}_{As}(A)^{op}\]
    where $A_+\simeq A \oplus \mathcal{I}$.
    This implies that we have an equivalence between the category of left $\mathcal{U}_{As}(A)$-modules and the category of right $\mathcal{U}_{As}(A)$-modules. In an unpublished paper, David Chataur notices that we also have a similar morphism for the envelopping algebra over $\mathcal{E}$, \emph{the mirror morphism}, which he denotes by $\mathfrak{m}$. Furthermore, it fits in the following commutative diagram of algebras when $A$ is an $\mathcal{E}$-algebra.
    \[
    \begin{tikzcd}
    \mathcal{U}_{\mathcal{E}}(A) \arrow[r, "\mathfrak{m}"] & \mathcal{U}_{\mathcal{E}}(A)^{op}   \\
    \mathcal{U}_{As}(A) \arrow[r, "T_{op}"] \arrow[u]     & \mathcal{U}_{As}(A)^{op}. \arrow[u]
    \end{tikzcd}
    \]
    We first describe the isomorphism $T_{op}$.
    \begin{proposition}
       Let $(\mathcal{C}, \otimes, \mathcal{I})$ be a symmetric monoidal category over $Ch(\mathbb{F})$ and let $A$ be an object in $\mathcal{C}$ endowed with and algebra structure over $As$. We have a commutative diagram of algebra isomorphisms
        \[
        \begin{tikzcd}
        A_+\otimes A_+^{op} \arrow[r, "T"] \arrow[d, "\simeq"] & A_+^{op}\otimes A_+  \arrow[d, "\simeq"] \\
        \mathcal{U}_{As}(A) \arrow[r, "T_{op}"]      & \mathcal{U}_{As}(A)^{op}
        \end{tikzcd}
        \]        
        where $T(a\otimes b)=(-1)^{\Real{a}\Real{b}}b\otimes a$ for $a\otimes b\in A_+\otimes A_+^{op}$. 
    \end{proposition}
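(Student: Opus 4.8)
The plan is to obtain the two vertical identifications from the enveloping-algebra computation already recorded for the operad $As$, to recognise $T$ as the symmetry isomorphism of the ambient category, and then to \emph{define} $T_{op}$ as the composite forced by the square, so that commutativity holds by construction and all four maps are algebra isomorphisms. For the left vertical arrow I would apply the isomorphism $\mathcal{U}_{As}(\overline{B})\simeq B\otimes B^{op}$ of the example above to $B=A_+$; since $\overline{A_+}=A$, this gives an algebra isomorphism $A_+\otimes A_+^{op}\xrightarrow{\simeq}\mathcal{U}_{As}(A)$. For the right vertical arrow I would take opposite algebras: applying $(-)^{op}$ to the left isomorphism yields $\mathcal{U}_{As}(A)^{op}\simeq(A_+\otimes A_+^{op})^{op}$, and the identity $(X\otimes Y)^{op}\simeq X^{op}\otimes Y^{op}$ — valid for any two algebras in a symmetric monoidal category, the two multiplications agreeing because the only discrepancy is an even cross term — specialises to $(A_+\otimes A_+^{op})^{op}\simeq A_+^{op}\otimes(A_+^{op})^{op}=A_+^{op}\otimes A_+$, producing the isomorphism $A_+^{op}\otimes A_+\xrightarrow{\simeq}\mathcal{U}_{As}(A)^{op}$.

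Next I would check that $T$ is an algebra isomorphism. The formula $T(a\otimes b)=(-1)^{\Real{a}\Real{b}}b\otimes a$ is precisely the symmetry isomorphism $\tau_{A_+,A_+^{op}}$ of $\mathcal{C}$, which is invertible; what must be verified is that it is multiplicative for the tensor-product algebra structures on source and target. This is an instance of the general fact that in a symmetric monoidal category the braiding $\tau_{X,Y}\colon X\otimes Y\to Y\otimes X$ is a morphism of monoids, taken here with $X=A_+$ and $Y=A_+^{op}$. Concretely, comparing $T((a_1\otimes b_1)(a_2\otimes b_2))$ with $T(a_1\otimes b_1)\,T(a_2\otimes b_2)$, the two accumulated Koszul signs differ only by a factor coming from a term $2\Real{a_2}\Real{b_1}$, which vanishes modulo $2$.

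Finally, I would define $T_{op}$ to be the composite $(\text{right vertical})\circ T\circ(\text{left vertical})^{-1}\colon\mathcal{U}_{As}(A)\to\mathcal{U}_{As}(A)^{op}$. Being a composite of algebra isomorphisms it is itself an algebra isomorphism, and the square commutes tautologically. The only point requiring care throughout is the bookkeeping of the various $op$-conventions together with their induced Koszul signs — in particular the sign check that makes $T$ multiplicative and the identification $(A_+\otimes A_+^{op})^{op}\cong A_+^{op}\otimes A_+$ — but once these signs are pinned down no essential obstacle remains.
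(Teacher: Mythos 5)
Your argument is correct and proves the proposition as literally stated: the vertical arrows do come from the example $\mathcal{U}_{As}(\overline{B})\simeq B\otimes B^{op}$ applied to $B=A_+$ (together with $(X\otimes Y)^{op}\simeq X^{op}\otimes Y^{op}$, whose sign discrepancy is indeed even), the braiding is an algebra map by the standard Koszul computation you sketch (the discrepancy $2\Real{a_2}\Real{b_1}$ is right), and defining $T_{op}$ as the conjugate makes the square commute by fiat. The paper's proof shares this skeleton --- it too obtains $T_{op}$ as $\phi\circ T\circ\phi^{-1}$ --- but the bulk of its argument is the part you skip: an element-by-element computation of that composite on $\sigma(a_1,\ldots,a_k)\in\mathcal{U}_{As}(A)$, using the coinvariance relations in the enveloping algebra, which produces the closed formula $T_{op}(\sigma(a_1,\ldots,a_k))=(-1)^{\eps_i\eta_i}\sigma^{op}(a_1,\ldots,a_k)$ with $\sigma^{op}=(\sigma(i+1),\ldots,\sigma(k),1,\sigma(1),\ldots,\sigma(i-1))$ and $i=\sigma^{-1}(1)$. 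That explicit operation $\sigma\mapsto\sigma^{op}$ on the permutation operad is the actual payload of the proposition: the next lemma proves $(\sigma\circ_1\omega)^{op}=\omega^{op}\circ_1\sigma^{op}$, and Proposition \ref{prop:env_alg_BE_equiv_module_cats} extends $op$ degreewise to the Barratt--Eccles operad to build the mirror morphism $\mathfrak{m}:\mathcal{U}_{\mathcal{E}}(A)\to\mathcal{U}_{\mathcal{E}}(A)^{op}$, which has no analogue of your formal definition because $\mathcal{U}_{\mathcal{E}}(A)$ is not a tensor product of two algebras. So your route is cleaner and fully adequate for this statement in isolation, but anyone following your proof would still have to carry out the paper's computation before the rest of the section could proceed.
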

    \begin{proof}
        The isomorphism of unital algebras $\phi: A_+\otimes A_+^{op}\to \mathcal{U}_{As}(A)$ maps the unit $1_A\otimes 1_A$ to $1_1(t)$. An element $a\otimes b\in A\otimes A^{op}$ is mapped to $(2, 1, 3)(t, a, b)$. Conversely, $\phi^{-1}$ sends an element $\sigma(a_1, \ldots a_k)\in \mathcal{U}_{As}(A)$ (with $\sigma\in\mathfrak{S}_k$, $a_i\in A$ for $i\in\{2, \ldots k\}$ and $a_1$ a variable) to $a_{\sigma(1)}.a_{\sigma(2)}\ldots a_{\sigma(i-1)}\otimes a_{\sigma(i+1)}\ldots a_{\sigma(k)}$ where $i=\sigma^{-1}(1)$. The composite $\phi\circ T\circ \phi^{-1}$ sends $\overline{a}_\sigma=\sigma(a_1, \ldots a_n)\in \mathcal{U}_{As}(A)$ to $(2, 1, 3)(-1)^{\eps_i\eta_i}(a_1, a_{\sigma(i+1)}\ldots a_{\sigma(k)}, a_{\sigma(1)}.a_{\sigma(2)}\ldots a_{\sigma(i-1)})$ where $\eps_i=\sum_{j=1}^{i-1}\Real{a_{\sigma(j)}}$ and $\eta_i=\sum_{j=i+1}^{n}\Real{a_{\sigma(j)}}$. Note that the product $a_1.a_2\ldots a_j$ for $j\in\{1,\ldots, k\}$ corresponds to $(1,2,\ldots, j)(a_1, a_2,\ldots, a_j)$, this implies that 
        \[(2, 1, 3)(a_1, a_{\sigma(i+1)}\ldots a_{\sigma(k)}, a_{\sigma(1)}\ldots a_{\sigma(i-1)})=\qquad\] \[(2, 1, 3)(a_1, (1, \ldots, k-i)(a_{\sigma(i+1)},\ldots, a_{\sigma(k)}), (1, \ldots, i-1)(a_{\sigma(1)},\ldots, a_{\sigma(i-1)})).\]
        Using the composition product found in the permutation operad, this term corresponds to 
        \[(2, \ldots, k-i+1, 1, k-i+2,\ldots, k)(a_1, a_{\sigma(i+1)},\ldots, a_{\sigma(k)}, a_{\sigma(1)},\ldots, a_{\sigma(i-1)}).\]
        We denote this element by $\overline{a}^{op}_\sigma$. We have $\overline{a}_\sigma\in \mathcal{U}_{As}(A)(k-1)$, it's an element which is invariant under the action of $\mathfrak{S}_{k-1}$ or equivalently the action of elements $\omega \in \mathfrak{S}_k$ such that $\omega(1)=1$. We consider $\omega=(1, \sigma(i+1), \ldots, \sigma(k), \sigma(1), \ldots, \sigma(i-1))$. The action of $\omega$ on $\overline{a}^{op}_\sigma$ gives the element $\omega.\overline{a}^{op}_\sigma$ which is equal to
        \[(\sigma(i+1), \ldots, \sigma(k), 1, \sigma(1), \ldots, \sigma(i-1))(a_1,\ldots, a_k).\]
        Indeed when $\omega$ permutes the element of $\overline{a}^{op}_\sigma$, it sends an element $a_{\sigma(j)}$ to the $\sigma(j)^{\text{th}}$-position. Finally we define $T_{op}$ for $\sigma(a_1, \ldots a_k)\in \mathcal{U}_{As}(A)$ by setting 
        \[T_{op}(\sigma(a_1, \ldots a_k)):=(-1)^{\eps_i\eta_i}\sigma^{op}(a_1, \ldots a_k)\] 
        where $\sigma^{op}=(\sigma(i+1), \ldots, \sigma(k), 1, \sigma(1), \ldots, \sigma(i-1))$, $i=\sigma^{-1}(1)$, $\eps_i=\sum_{j=1}^{i-1}\Real{a_{\sigma(j)}}$ and $\eta_i=\sum_{j=i+1}^{k}\Real{a_{\sigma(j)}}$. 
    \end{proof}
    In the previous proof, we've defined an operation on the permutation operad: 
    \begin{align*}
        \mathfrak{S} &\to \mathfrak{S} \\
        \sigma &\mapsto \sigma^{op}.
    \end{align*}
    We will use this operation to define an isomorphism of algebras between $\env(A)$ and $\env(A)^{op}$. To do so, we will need the following result.
    \begin{lemma}
            For $k\in\mathbb{N}^\ast$ and for any $\sigma, \omega \in \mathfrak{S}(k)$, we have the following relation
            \[(\sigma\circ_1 \omega)^{op}=\omega^{op}\circ_1\sigma^{op}.\]
    \end{lemma}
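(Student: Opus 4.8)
The plan is to prove this purely combinatorial identity by unfolding both sides completely in one-line notation and checking that the resulting sequences coincide. The two ingredients are elementary: the operation $\sigma\mapsto\sigma^{op}$ is the reflection of the word $(\sigma(1),\ldots,\sigma(k))$ about the distinguished letter $1$, and the partial composite $-\circ_1-$ substitutes the second permutation into the slot occupied by that letter $1$. First I would fix $i=\sigma^{-1}(1)$ and $m=\omega^{-1}(1)$ and record the sub-words lying to the left and right of the distinguished letter, writing $L_\sigma=(\sigma(1),\ldots,\sigma(i-1))$ and $R_\sigma=(\sigma(i+1),\ldots,\sigma(k))$, and likewise $L_\omega,R_\omega$ for $\omega$. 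With this notation the definition of $op$ reads simply $\sigma^{op}=(R_\sigma,1,L_\sigma)$ and $\omega^{op}=(R_\omega,1,L_\omega)$.

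Next I would write $\sigma\circ_1\omega$ explicitly from the substitution description of the composition product in the permutation operad: the letter $1$ of $\sigma$, sitting at position $i$, is replaced by the entire word of $\omega$, while the remaining letters of $\sigma$ are relabelled upward; schematically $\sigma\circ_1\omega=(L_\sigma,L_\omega,1,R_\omega,R_\sigma)$, since $\omega$'s left block $L_\omega$ lands immediately to the left of the new distinguished letter and $R_\omega$ immediately to its right. Applying $op$ then amounts to locating the distinguished $1$ (now inside the inserted copy of $\omega$, at its $m$-th place) and reflecting, which yields the five-block concatenation $(R_\omega,R_\sigma,1,L_\sigma,L_\omega)$. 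Computing the right-hand side the same way, substituting $\sigma^{op}=(R_\sigma,1,L_\sigma)$ into the distinguished slot of $\omega^{op}=(R_\omega,1,L_\omega)$ interleaves the blocks in exactly the same order and produces $(R_\omega,R_\sigma,1,L_\sigma,L_\omega)$ as well, so the two sides agree block for block. Conceptually this is just the assertion that word-reversal about the marked position is an anti-automorphism for "insertion at the marked position'': reflecting a substitution reverses the order of the two factors, which is precisely the anti-homomorphism property needed for the mirror morphism.

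The step I expect to be the main obstacle is the bookkeeping of the labels rather than of the positions. The positional (block) structure of the two sides is manifestly the same, but one must verify that the relabelling carried out by $-\circ_1-$ (the upward shift of the non-distinguished letters and the labels $\{1,\ldots,k\}$ assigned to the inserted block) is compatible on both sides, and that the distinguished letter $1$ ends up at the same index; these are exactly the delicate points, and they are most transparent when the whole computation is carried out in one-line notation with the substitution rule for $\circ_1$ and the reflection rule for $op$ both written out explicitly, rather than through the abstract block-permutation formula. Once this label matching is checked, the equality $(\sigma\circ_1\omega)^{op}=\omega^{op}\circ_1\sigma^{op}$ follows immediately from the block comparison above.
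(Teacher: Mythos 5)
Your overall strategy is the same as the paper's: unfold both sides in one-line notation and compare them word for word. The block analysis is correct --- both sides do have the shape $(R_\omega, R_\sigma, 1, L_\sigma, L_\omega)$ --- but the step you defer, the matching of labels, is precisely where the argument breaks, and it does not break as mere bookkeeping to be verified: it genuinely fails. On the left, $\circ_1$ inserts $\omega$ with its labels $\{1,\dots,k\}$ intact and shifts the remaining letters of $\sigma$ up by $k-1$; since $op$ only rearranges a word without changing its values, in $(\sigma\circ_1\omega)^{op}$ the blocks $R_\omega, L_\omega$ carry the small labels and $R_\sigma, L_\sigma$ the shifted ones. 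On the right the roles are reversed: $\sigma^{op}$ is the inserted factor, so $R_\sigma, L_\sigma$ keep the small labels and $R_\omega, L_\omega$ are shifted. Concretely, for $k=2$ and $\sigma=\omega=(2,1)$ one gets $\sigma\circ_1\omega=(3,2,1)$, hence $(\sigma\circ_1\omega)^{op}=(1,3,2)$, whereas $\sigma^{op}=\omega^{op}=(1,2)$ and $\omega^{op}\circ_1\sigma^{op}=(1,2,3)$; the two sides are distinct elements of $\mathfrak{S}_3$. So the ``label matching'' you postpone cannot be checked, because it is false, and your assertion that the two sides ``agree block for block'' holds for positions but not for values.

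What rescues the statement --- and what the paper's own proof supplies after writing out both words with their explicit shifts --- is the observation that the two permutations have identical block order and differ only by a relabelling of the non-distinguished letters, i.e. by left multiplication by a permutation $\rho$ with $\rho(1)=1$. This discrepancy is exactly what is quotiented out when the permutations are used as operations $\pi(t,a_1,\dots,a_{2k-2})$ on the enveloping algebra: the coinvariants defining $\mathcal{U}_{\mathcal{E}}(\overline{A})$ identify $\rho.\pi$ applied to a tuple with $\pi$ applied to the correspondingly permuted tuple, for any $\rho$ fixing the distinguished slot. That is the only sense in which the identity is needed for the mirror morphism, and it is how the paper concludes (``identified in $\mathfrak{S}$'' via the equivariance relation). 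To complete your proof you must therefore (a) actually carry out the label computation, (b) record that the two words are not equal on the nose, and (c) add the argument that they coincide modulo the equivariance relation in the coinvariants where the lemma is applied; without (c) the proof does not close.
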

    \begin{proof}
        Let $k\in\mathbb{N}^\ast$, we consider to permutations $\sigma, \omega \in \mathfrak{S}(k)$ with $\sigma=(\sigma(1), \ldots, \sigma(k))$ and $\omega=(\omega(1), \ldots, \omega(k))$. Wet set $i=\sigma^{-1}(1)$ and $j=\omega^{-1}(1)$. On one side, we have
        \begin{multline*}
        \sigma\circ_1\omega=\sigma(\omega , 1_1, \ldots 1_1)=(\sigma(1)+k-1, \ldots, \sigma(i-1)+k-1, \\
        \omega(1), \ldots,\omega(j-1), 1, \omega(j+1), \ldots, \omega(k),\sigma(i+1)+k-1, \ldots, \sigma(k)+k-1).
        \end{multline*}
        Hence, the term is equal to
        \begin{multline*}
        (\sigma\circ_1\omega)^{op}=(\omega(j+1), \ldots, \omega(k),\sigma(i+1)+k-1, \ldots, \sigma(k)+k-1, 1, \\
         \sigma(1)+k-1, \ldots, \sigma(i-1)+k-1, \omega(1), \ldots,\omega(j-1)).  
        \end{multline*}
        On the other side, since $\sigma^{op}=(\sigma(i+1), \ldots, \sigma(k), 1, \sigma(1),\ldots \sigma(i-1))$ and $\omega^{op}=(\omega(j+1), \ldots, \omega(k+1), 1, \omega(1),\ldots \omega(j-1))$, we get that $\omega^{op}\circ_1\sigma^{op}$ is equal to
        \begin{multline*}
            \omega^{op}\circ_1\sigma^{op}=(\omega(j+1)+k-1, \ldots, \omega(k)+k-1,\sigma(i+1), \ldots, \sigma(k), 1,\\ 
            \sigma(1), \ldots, \sigma(i-1), \omega(1)+k-1, \ldots,\omega(j-1)+k-1).
        \end{multline*}
        Using the equivariance relation in the permutation operad, we can see that $(\sigma\circ_1\omega)^{op}$ and $\omega^{op}\circ_1\sigma^{op}$ are identified in $\mathfrak{S}$.
    \end{proof}
    We can now prove the result we mentionned at the beginning of this subsection.
    \begin{proposition}\label{prop:env_alg_BE_equiv_module_cats}
     Let $(\mathcal{C}, \otimes, \mathcal{I})$ be a symmetric monoidal category over $Ch(\mathbb{F})$ and let $A$ be an object in $\mathcal{C}$. If $A$ is an $\mathcal{E}$-algebra then, we have a commutative diagram of isomorphisms of algebras
    \[
    \begin{tikzcd}
    \mathcal{U}_{\mathcal{E}}(A) \arrow[r, "\mathfrak{m}"] & \mathcal{U}_{\mathcal{E}}(A)^{op}   \\
    \mathcal{U}_{As}(A) \arrow[r, "T_{op}"] \arrow[u]     & \mathcal{U}_{As}(A)^{op}. \arrow[u]
    \end{tikzcd}
    \]
    \end{proposition}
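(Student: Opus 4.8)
The plan is to obtain $\mathfrak{m}$ as the lift to the enveloping algebra of the involution $\sigma\mapsto\sigma^{op}$ of the permutation operad $\mathfrak{S}$, and then to read off the commutativity of the square by restricting to the degree-zero part. Recall that $\mathcal{E}(n)_d$ is spanned by chains $\sigma_0\otimes\cdots\otimes\sigma_d$ with $\sigma_i\in\mathfrak{S}_n$. First I would extend $(-)^{op}$ to $\mathcal{E}$ termwise, setting $(\sigma_0\otimes\cdots\otimes\sigma_d)^{op}:=\sigma_0^{op}\otimes\cdots\otimes\sigma_d^{op}$. A short computation (applying the definition of $(-)^{op}$ twice) shows that $\sigma\mapsto\sigma^{op}$ is an involution of each $\mathfrak{S}_n$, hence the termwise operation is a degreewise bijection of $\mathcal{E}(n)$; in particular it preserves the condition that consecutive entries differ, so it is well defined on $\mathcal{E}(n)$. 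Since $\mathcal{U}_{\mathcal{E}}(A)$ is spanned by symbols $\sigma(t,a_1,\ldots,a_k)$ with $\sigma\in\mathcal{E}(k+1)$, I would then set
\[\mathfrak{m}\bigl(\sigma(t,a_1,\ldots,a_k)\bigr):=(-1)^{\eps\eta}\,\sigma^{op}(t,a_1,\ldots,a_k),\]
where for each constituent permutation $\sigma_j$ (with $i_j=\sigma_j^{-1}(1)$) the Koszul sign is the one dictated by $T_{op}$, namely the sign created when the block of arguments whose slots lie to the left of the distinguished $t$-slot is commuted past the block lying to its right.

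Next I would verify that $\mathfrak{m}$ respects the defining identifications of the enveloping algebra and commutes with the differential. Compatibility with the relations $p\circ_{1+s}q(t,\ldots)\equiv p(t,\ldots,q(a_s,\ldots),\ldots)$ reduces, after applying $(-)^{op}$ termwise, to the behaviour of $(-)^{op}$ under partial composition at the level of $\mathfrak{S}$, i.e.\ to termwise variants of the preceding lemma. That $\mathfrak{m}$ is a chain map follows because $(-)^{op}$ fixes the chain degree $d$ and commutes entrywise with the Barratt--Eccles differential (which merely deletes an entry), while the internal differential on the $a_i$ is absorbed by the Koszul signs exactly as in the handling of the homotopy $h$ in the proof of Proposition~\ref{prop:algBE_envalg}. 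The one genuinely new ingredient is to upgrade the preceding lemma $(\sigma\circ_1\omega)^{op}=\omega^{op}\circ_1\sigma^{op}$ from $\mathfrak{S}$ to $\mathcal{E}$: since $\circ_1$ on $\mathcal{E}$ is a signed sum over monotone paths in a $d\times e$ grid, I would match each path from $(0,0)$ to $(d,e)$ occurring in $(\Sigma\circ_1\Omega)^{op}$ with its reflection across the diagonal, a path from $(0,0)$ to $(e,d)$ occurring in $\Omega^{op}\circ_1\Sigma^{op}$, and check that the two path-signatures agree under this reflection.

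The main step, and the one I expect to be the chief obstacle, is that $\mathfrak{m}$ is a morphism into the opposite algebra, i.e.\ an anti-homomorphism. Using the product $p(t,\vec a)\cdot q(t,\vec b)=(p\circ_1 q)(t,\vec b,\vec a)$ together with the extended lemma $(p\circ_1 q)^{op}=q^{op}\circ_1 p^{op}$, one gets $\mathfrak{m}(p(t,\vec a)\cdot q(t,\vec b))=\pm (q^{op}\circ_1 p^{op})(t,\vec b,\vec a)$, whereas $\mathfrak{m}(q(t,\vec b))\cdot\mathfrak{m}(p(t,\vec a))=\pm (q^{op}\circ_1 p^{op})(t,\vec a,\vec b)$. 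These differ only by the block transposition interchanging the $\vec a$- and $\vec b$-arguments, which the equivariance relations of the enveloping algebra absorb at the cost of a Koszul sign; the crux is to confirm that this sign is exactly the discrepancy between the two $(-1)^{\eps\eta}$ factors, so that the identity holds on the nose. This is the same sign bookkeeping that makes $T_{op}$ multiplicative in the $As$-case, now carried out one chain-entry at a time, and is where the reflection-of-paths signature check from the previous paragraph is indispensable. Finally, because $(-)^{op}$ is an involution and the two Koszul signs appearing in $\mathfrak{m}\circ\mathfrak{m}$ coincide (the left and right blocks simply swap roles), one has $\mathfrak{m}^2=\id$, so $\mathfrak{m}$ is an isomorphism of algebras.

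It then remains to identify $\mathfrak{m}$ with $T_{op}$ through the vertical maps. These arise from the operad morphism $As\to\mathcal{E}$ identifying $As(n)$ with the degree-zero part $\mathcal{E}(n)_0=\mathfrak{S}_n$, which induces $\mathcal{U}_{As}(A)\to\mathcal{U}_{\mathcal{E}}(A)$ by Proposition~\ref{prop:functo_envelop}. On the degree-zero part the termwise operation $(-)^{op}$ is literally the permutation operation $\sigma\mapsto\sigma^{op}$ and the Koszul sign is precisely $(-1)^{\eps_i\eta_i}$, so the restriction of $\mathfrak{m}$ along these inclusions is $T_{op}$ by construction. This yields the commuting square and completes the proof.
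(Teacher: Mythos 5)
Your proposal follows essentially the same route as the paper: extend $\sigma\mapsto\sigma^{op}$ termwise to $\mathcal{E}$, upgrade the relation $(\sigma\circ_1\omega)^{op}=\omega^{op}\circ_1\sigma^{op}$ from $\mathfrak{S}$ to $\mathcal{E}$ via the bijection of lattice paths in the $d\times e$ and $e\times d$ diagrams, define $\mathfrak{m}$ with the Koszul sign $(-1)^{\eps_i\eta_i}$, and recover $T_{op}$ by restriction to the degree-zero part. You are in fact somewhat more careful than the paper on the points it leaves implicit (well-definedness on the defining relations of the enveloping algebra, the path-signature signs, and the equivariance sign reconciling $(t,\vec b,\vec a)$ with $(t,\vec a,\vec b)$ in the anti-homomorphism check), so the proposal is correct.
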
    
    \begin{proof}
        We can extend the definition of the operator $op$ to the the Barratt-Eccles operad. For $k\in\mathbb{N}$, we consider the map $op$ defined for $l\in \mathbb{N}$ by
        \[\begin{array}{rccc}
            op: & \mathcal{E}(k)_l & \to & \mathcal{E}(k)_l  \\
             & \sigma_0\otimes \ldots \otimes \sigma_l & \mapsto & \sigma_0^{op}\otimes \ldots \otimes \sigma_l^{op}.
        \end{array}\]
        This gives a chain map on $\mathcal{E}(k)$. Let $\sigma=\sigma_0\otimes \ldots \otimes \sigma_l\in\mathcal{E}(k)_l$. Recall that the differential $\mathcal{E}(k)$ on is given by
        \[d(\sigma_0\otimes \ldots\otimes \sigma_l)=\sum_{i=0}^l (-1)^i \sigma_0\otimes \ldots\otimes \sigma_{i-1} \otimes \sigma_{i+1}\ldots\otimes \sigma_l.\]
        We have $d(op(\sigma)= \sum_{i=0}^l (-1)^i\sigma_0^{op}\otimes \ldots \sigma_{i-1}^{op}\otimes \sigma_{i+1}^{op} \otimes \sigma_l^{op}=d(\sigma)^{op}$.
        We can show that for any $\sigma, \omega \in \mathcal{E}$, we have
        \[op(\omega)\circ_1 op(\sigma)=op(\sigma \circ_1 \omega).\]
        Indeed, recall that in the Barratt-Eccles operad, the $1$-partial composition product of $\sigma=(\sigma_0, \ldots, \sigma_d)\in\mathcal{E}(k)_d$ and $\omega=(\omega_0, \ldots, \omega_e)\in\mathcal{E}(l)_e$ is defined as follows
        \[\sigma\circ_1\omega:=\sum_{(x_\ast, y_\ast)}\pm (\sigma_{x_0}\circ_1 \omega_{y_0}, \ldots, \sigma_{x_{d+e}}\circ_1 \omega_{y_{d+e}})\]
        where the sum ranges over the set of paths from $(0,0)$ to $(d,e)$ in a $d\times e$ diagram. By the previous lemma, we have 
        \[op(\sigma\circ_1\omega)=\sum_{(x_\ast, y_\ast)}\pm (\omega_{y_0}^{op}\circ_1 \sigma_{x_0}^{op}, \ldots, \omega_{y_{d+e}}^{op}\circ_1 \sigma_{x_{d+e}}^{op}).\]
        On the other hand,
        \[op(\omega)\circ_1 op(\sigma)=\sum_{(a_\ast, b_\ast)}\pm (\omega_{a_0}^{op}\circ_1 \sigma_{b_0}^{op}, \ldots, \omega_{a_{e+d}}^{op}\circ_1 \sigma_{b_{e+d}}^{op})\]
        where the sum ranges over the set of paths from $(0,0)$ to $(e,d)$ in a $e\times d$ diagram which is in bijection with paths in a $d\times e$ diagram. Hence, we get the sought equality.
        We define $\mathfrak{m}:\env(A)\to \env(A)^{op}$ by setting for $\omega(t, a_1, \ldots, a_k)\in \env(A)$
        \[\mathfrak{m}(\omega(t, a_1, \ldots, a_k)):=(-1)^{\eps_i \eta_i}op(\omega)(t, a_1, \ldots, a_k))\]
         where $i=\sigma^{-1}(1)$, $\eps_i=\sum_{j=1}^{i-1}\Real{a_{\sigma(j)}}$ and $\eta_i=\sum_{j=i+1}^{k}\Real{a_{\sigma(j)}}$. This is a morphism of algebras because of the identity verified by the $op$ operator. One easily checks that it makes the diagram given in the proposition commute. 
    \end{proof}
    
    \subsection{Proof of Theorem \ref{thm:BV-Hoch-via-operads}}\label{subsec:Thm_BV_sing_proof}
        In this section, we explain why the following statement is true.
        \begin{thm*}[\textbf{A}]
            Let $\mathcal{M}$ be a compact, oriented, smooth manifold. Then, there exists a Batalin-Vilkovisky algebra structure on $HH^\ast(C^\ast(\mathcal{M}; \mathbb{F}))$, the Hochschild cohomology of the singular cochains of $\mathcal{M}$ with coefficients in a field $\mathbb{F}$.
        \end{thm*}
        We actually show a more general result. We get Theorem \ref{thm:BV-Hoch-via-operads} as a consequence of Poincaré duality, the following proposition and using the fact that the singular cochains of a topological space is endowed with the structure of an $\mathcal{E}_+$-algebra (Corollary \ref{coro:BE alg on norm cochains}). 
        \begin{proposition}\label{prop:BV_HH_Alg_BE}
            Let $A$ be an algebra over the Barratt-Eccles operad $\mathcal{E}_+$ in $Ch(\mathbb{F})$. If there exists a quasi-isomorphism of $A$-modules between $A$ and its linear dual $DA$ then one can endow the Hochschild cohomology $HH^\ast(A)$ with a Batalin-Vilkovisky algebra structure. 
        \end{proposition}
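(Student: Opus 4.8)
The plan is to reduce the statement to Menichi's criterion recalled in Proposition~\ref{prop:12_Menichi}: it suffices to show that the hypothesis promotes $A$ into a \emph{derived Poincaré duality algebra}, i.e. that the given one-sided quasi-isomorphism $\theta : A \xrightarrow{\simeq} DA$ of $A$-modules can be realized as an isomorphism $A \simeq DA$ in $\mathscr{D}(A^e)$, the derived category of $A$-bimodules (up to the relevant degree shift). Set $E := \env(\overline{A})$, which is a DGA. By Proposition~\ref{prop:algBE_envalg}~i) there is a quasi-isomorphism of left $E$-modules $f : E \xtwoheadrightarrow{\simeq} A$, the map of Corollary~\ref{coro:morph_env_alg}, where $A$ carries the left $E$-module structure of Lemma~\ref{lem:mod_struc_uo_oalg}. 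Dually, $DA$ is a right $E$-module, and the mirror morphism $\mathfrak{m} : E \simeq E^{op}$ of Proposition~\ref{prop:env_alg_BE_equiv_module_cats} (point iii)) makes it a left $E$-module as well.

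Next I would construct the right-hand map. Since $E$ is free of rank one as a left module over itself (cf. Lemma~\ref{lem:morph_uo_char_1}), there is a unique morphism of left $E$-modules $h : E \to DA$ with $h(1_1(t)) = \theta(1_A)$. To see it is a quasi-isomorphism, recall from the proof of Proposition~\ref{prop:algBE_envalg}~i) the section $s : A \to E$, $a \mapsto (1,2)(t,a)$, which is a left $A$-module map satisfying $f \circ s = \id_A$; as $f$ is a quasi-isomorphism, so is $s$. One then checks $s(1_A) = 1_1(t)$, so that $h \circ s$ and $\theta$ are left $A$-module maps $A \to DA$ agreeing on $1_A$, whence $h \circ s = \theta$. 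Since $\theta$ and $s$ are quasi-isomorphisms, $h$ is one by two-out-of-three. We thus obtain morphisms of left $E$-modules
\[A \xleftarrow{\simeq} E \xrightarrow{\simeq} DA,\]
with left-hand map $f$ and right-hand map $h$.

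By Proposition~\ref{prop:algBE_envalg}~ii), restriction of scalars along $A\otimes A^{op}\to E$ turns every left $E$-module into an $A$-bimodule, so $f$ and $h$ become morphisms of $A$-bimodules. The technical heart of the argument—which I expect to be the main obstacle—is to verify that the bimodule structures obtained this way on $A$ and on $DA$ are the \emph{standard} ones (the regular bimodule and its dual); this is exactly where the commutativity-up-to-homotopy encoded by $\mathcal{E}_+$, together with the compatibility of the mirror morphism with $T_{op}$ established in Proposition~\ref{prop:env_alg_BE_equiv_module_cats}, is indispensable, associativity alone being insufficient. Granting this, I would choose a cofibrant resolution $P \xtwoheadrightarrow{\simeq} A$ of $A$ over $A^e$; since $f$ is a surjective quasi-isomorphism of bimodules, hence an acyclic fibration, cofibrancy of $P$ provides a lift $\widetilde f : P \to E$ over $P \to A$, and $\widetilde f$ is a quasi-isomorphism by two-out-of-three. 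Composing yields quasi-isomorphisms of $A$-bimodules
\[A \xleftarrow{\simeq} P \xrightarrow{\simeq} DA,\]
the right-hand map being $h\circ\widetilde f$. This exhibits the isomorphism $A \simeq DA$ in $\mathscr{D}(A^e)$.

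It remains to feed this into Menichi's result. The derived bimodule isomorphism $A \simeq DA$ makes $A$ a derived Poincaré duality algebra, provided the cyclicity/symmetry condition of Proposition~\ref{prop:12_Menichi} is met; this self-duality is precisely what the isomorphism $E \simeq E^{op}$ of Proposition~\ref{prop:algBE_envalg}~iii) guarantees, as it renders the duality compatible with the involution interchanging the two sides. Invoking Proposition~\ref{prop:12_Menichi} then endows $HH^\ast(A)$ with a Batalin--Vilkovisky algebra structure. To summarize, the homotopical lifting is formal once all maps are known to be bimodule morphisms; the genuine difficulty is the identification of the operadically-induced $A$-bimodule structures on $A$ and $DA$ with the classical ones, so that the equivalence indeed takes place in $\mathscr{D}(A^e)$.
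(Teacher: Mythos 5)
Your overall strategy coincides with the paper's: use $E=\env(\overline{A})$ to produce a zigzag of $A$-bimodule quasi-isomorphisms $A\xleftarrow{\simeq}P\xrightarrow{\simeq}DA$ and then invoke Menichi's criterion. However, two steps are genuinely broken or missing.

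First, your proof that $h:E\to DA$ is a quasi-isomorphism rests on the claim that the section $s:a\mapsto (1,2)(t,a)$ is a morphism of left $A$-modules, and it is not. Under the identification of the degree-zero part with $A\otimes A^{op}$, one has $s(ba)=(1,2)(t,ba)\equiv 1\otimes ba$, whereas $b\cdot s(a)=(2,1)(t,b)\cdot(1,2)(t,a)\equiv b\otimes a$; these differ for a noncommutative $A$. So the identity $h\circ s=\theta$ does not follow from agreement on $1_A$, and the two-out-of-three argument collapses. (Nor can you replace it by $h=\theta\circ f$, since $\theta$ is only an $A$-module map, not an $E$-module map, so Lemma \ref{lem:morph_uo_char_1} does not apply to $\theta\circ f$.) The paper sidesteps this entirely: it never proves directly that the map $E\to DA$ is a quasi-isomorphism. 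Instead it only records that the composite $c:P\to DA$ satisfies $\eval([c])=[\Gamma]$ with $\Gamma=\theta(1_A)$, and then Proposition \ref{prop:11_Menichi} — whose hypothesis that $a\mapsto a.\eval([c])$ is an isomorphism on homology is exactly the assumed quasi-isomorphism $\theta$ — upgrades $c$ to a quasi-isomorphism. You should route your argument through Proposition \ref{prop:11_Menichi} in the same way.

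Second, you do not verify the hypothesis $\Delta(1)=0$ of Proposition \ref{prop:12_Menichi}, which is the actual cyclicity condition; the isomorphism $E\simeq E^{op}$ of Proposition \ref{prop:algBE_envalg} iii) is used to put a left $E$-module structure on $DA$, but it does not by itself imply that $B^\vee$ kills the class $\phi(1)=[c]$. The paper proves this by an explicit computation: taking $P=\mathbb{B}(A)$, it writes down a specific lift $q_{\mathcal{U}}$ supported on bar elements of length $0$, deduces that the resulting cocycle $c$ satisfies $c(a[a_1,\ldots,a_k]b)=0$ for $k>0$, transports $c$ to $D(HC_\ast(A))$, and checks directly that $B^\vee(c)=0$. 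Without some such computation your appeal to Proposition \ref{prop:12_Menichi} is unjustified, so the BV structure is not yet established.
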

        We will need some preliminary results in order to prove this proposition. 

        Let $A$ be a DGA. Assuming there exists a quasi-isomorphism of $A$-modules between $A$ and its linear dual $DA$, Menichi gives in \cite{Men09} sufficient conditions to endow the Hochschild cohomology $HH^\ast(A)$ with a BV-algebra structure. We recall these results below.
        \begin{defi}[{\cite[Definition 4.4]{Abb15}}]\label{def:DPDA-Abb}
            A differential graded algebra $A$ is a \emph{derived Poincaré duality algebra} (DPDA) if $A$ is isomorphic to its linear dual $DA$ (eventually up to a shift) in the derived category of $A$-bimodules. 
        \end{defi}    
        Let $ev(1_A):\Ext_A(A, DA)^\ast\to H^\ast(DA)$ be the evaluation at the unit $1_A$ of $A$ and let $i_A:A\hookrightarrow A\otimes A^{op}$ be the inclusion in the first factor. We set $\eval:=ev(1_A)\circ Ext_{i_A}(A, DA)$. The following result tells us that if there exists a section to $\Ext_{i_A}(A, DA)$, then that morphism preserves quasi-isomorphisms. 
        % https://q.uiver.app/?q=WzAsMyxbMCwwLCJIKERBKV9cXGJ1bGxldCJdLFsxLDAsIkV4dF97QX0oQSwgREEpX1xcYnVsbGV0Il0sWzIsMCwiSEhfXFxidWxsZXQoQSwgREEpOj1FeHRfe0FeZX0oQSwgREEpX1xcYnVsbGV0Il0sWzEsMCwiSChldigxX0EpKSJdLFsyLDEsImV4dF97aV9BfShBLCBEQSkiLDAseyJjdXJ2ZSI6LTN9XSxbMSwyLCJleHRfe1xcbXV9KEEsIERBKSIsMCx7ImN1cnZlIjotM31dXQ==
        \[\begin{tikzcd}
    	{H^\ast(DA)} & {\Ext_{A}(A, DA)^\ast} & {HH^\ast(A, DA):=\Ext_{A^e}(A, DA)^\ast}
    	\arrow["{ev(1_A)}"', "\simeq", from=1-2, to=1-1]
    	\arrow["{\Ext_{i_A}(A, DA)}", curve={height=-18pt}, from=1-3, to=1-2]
    	\arrow["{?}", curve={height=-18pt}, dashed, from=1-2, to=1-3]
        \end{tikzcd}\]
        
        \begin{proposition}[{\cite[Proposition 11]{Men09}}]\label{prop:11_Menichi}
        Let $A$ be a DGA and let $c\in HC^\ast(A,DA)$ be a cocycle such that
        the morphism of $H(A)$-modules
        \begin{align*}
            H(A) &\xrightarrow{\simeq} H(DA) \\
            a &\mapsto a.\eval([c])    
        \end{align*}
        is an isomorphism.
        Then $c$ is a quasi-isomorphism and hence, $A$ is a derived Poincaré dualité algebra.
        \end{proposition}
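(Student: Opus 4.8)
The plan is to realize the cohomology class $[c]\in HH^\ast(A,DA)=\Ext^\ast_{A^e}(A,DA)$ as an honest chain map and then to identify the map it induces on homology with the map appearing in the hypothesis. Concretely, I would first fix a semifree (cofibrant) resolution $P\xrightarrow{\simeq}A$ of $A$ in the category of $A$-bimodules, so that $HH^\ast(A,DA)=H^\ast(\Hom_{A^e}(P,DA))$. The cocycle $c$ then corresponds to a morphism of $A$-bimodules $\hat c\colon P\to DA$, well defined up to homotopy (and up to the degree shift implicit when $c$ has positive degree). The assertion ``$c$ is a quasi-isomorphism'' means precisely that $\hat c$ is a quasi-isomorphism; once this is known, $\hat c$ becomes an isomorphism $A\cong DA$ in $\mathscr{D}(A^e)$, which is exactly the statement that $A$ is a derived Poincaré duality algebra. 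So the whole proposition reduces to showing that $H(\hat c)\colon H(P)=H(A)\to H(DA)$ is an isomorphism.

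The key step is to compute $H(\hat c)$ explicitly and recognize it as the map $a\mapsto a\cdot\eval([c])$. For this I would use that $\hat c$ is in particular left $A$-linear (being a bimodule map), so $H(\hat c)$ is a morphism of left $H(A)$-modules. Restricting scalars along $i_A\colon A\hookrightarrow A^e$ turns $P$ into a resolution of $A$ as a left $A$-module and $\hat c$ into the left-linear map whose class in $\Ext_A(A,DA)$ is $\Ext_{i_A}(A,DA)([c])$; applying the evaluation isomorphism $ev(1_A)\colon\Ext_A(A,DA)\xrightarrow{\simeq}H(DA)$ reads off $\hat c$ on the generator, giving $H(\hat c)([1_A])=\eval([c])$ in $H(DA)$. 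Because $H(P)=H(A)$ is free of rank one as a left $H(A)$-module, generated by the class of the unit, left $H(A)$-linearity forces $H(\hat c)(a)=H(\hat c)(a\cdot[1_A])=a\cdot H(\hat c)([1_A])=a\cdot\eval([c])$ for every $a\in H(A)$. Thus $H(\hat c)$ is exactly the map in the hypothesis.

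With this identification the conclusion is immediate: by assumption $a\mapsto a\cdot\eval([c])$ is an isomorphism $H(A)\to H(DA)$, hence $H(\hat c)$ is an isomorphism, hence $\hat c\colon P\to DA$ is a quasi-isomorphism of $A$-bimodules, so $c$ is an isomorphism $A\cong DA$ in $\mathscr{D}(A^e)$, proving that $A$ is a DPDA.

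I expect the main obstacle to be the careful bookkeeping in the middle step: verifying that the composite $ev(1_A)\circ\Ext_{i_A}(A,DA)$ really returns the value of the left-linear map $\hat c$ on the generator $[1_A]$ of $H(P)$, and that the left $H(A)$-module structure used to form $a\cdot\eval([c])$ coincides with the one induced by the left action on $H(DA)$. One must also keep track of signs and of the degree shift when $c$ is not of degree $0$, so that ``free of rank one'' is understood up to suspension. Once these compatibilities are in place, the left-linearity argument does all the work.
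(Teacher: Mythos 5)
The paper states this proposition purely as a citation of \cite[Proposition 11]{Men09} and supplies no proof of its own, so there is nothing internal to compare against; your argument is correct and is essentially Menichi's original one. The key mechanism --- representing $[c]$ by a bimodule map $\hat c\colon P\to DA$, noting that $H(\hat c)$ is left $H(A)$-linear, that $H(A)$ is free of rank one on the class of the unit, and that $ev(1_A)\circ\Ext_{i_A}(A,DA)$ reads off $H(\hat c)([1_A])=\eval([c])$ --- is exactly the intended proof, and the sign and degree-shift caveats you flag do not affect the isomorphism conclusion.
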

        The next result gives an additional condition (a sort of cyclicity condition) that needs to be verified by a DPDA to endow $HH^\ast(A)$ with a BV algebra structure.
  
        \begin{proposition}[{\cite[Proposition 12]{Men09}}]\label{prop:12_Menichi}
        Let $A$ be a derived Poincaré duality algebra and let $\phi:HH^\ast(A)\to HH^\ast(A,DA)$ be the associated isomorphism of $HH^\ast(A)$-bimodules. We define a map $\Delta: HH^\ast(A) \to HH^{\ast-1}(A)$ by setting 
        \[\Delta=\phi^{-1}\circ B^\vee \circ \phi\]
        where $B^\vee$ is the \emph{dual of Connes boundary map}. If $\Delta(1)=0$, then the Gerstenhaber algebra $HH^\ast(A)$ equipped with $\Delta$ becomes a BV-algebra.
        \end{proposition}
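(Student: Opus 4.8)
\textbf{Plan of proof for Proposition \ref{prop:BV_HH_Alg_BE}.}
The strategy is to verify the two hypotheses of Menichi's Proposition \ref{prop:12_Menichi}, namely that $A$ is a derived Poincaré duality algebra (DPDA) and that the induced operator $\Delta$ satisfies $\Delta(1)=0$. Once both are established, Proposition \ref{prop:12_Menichi} produces the desired BV algebra structure on $HH^\ast(A)$ directly. The key input that is special to the Barratt--Eccles operad, and which replaces the simple-connectivity assumption in Menichi's original argument, is Proposition \ref{prop:algBE_envalg}: the enveloping algebra $\env(\overline A)$ simultaneously resolves $A$ as a module, receives a morphism from $A\otimes A^{op}$, and is isomorphic to its opposite.

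First I would use the quasi-isomorphism $A\xrightarrow{\simeq}DA$ of $A$-modules together with the three properties of $\env(\overline A)$ to lift Poincaré duality into the derived category of $A$-bimodules. Concretely, by i) of Proposition \ref{prop:algBE_envalg} there is a quasi-isomorphism of left $\env(\overline A)$-modules $\env(\overline A)\twoheadrightarrow A$; composing with the given $A\xrightarrow{\simeq}DA$ yields a map $\env(\overline A)\to DA$. The linear dual $DA$ is a priori only a right $\env(\overline A)$-module (equivalently a right $A$-module), but the isomorphism $\env(\overline A)\simeq\env(\overline A)^{op}$ of iii) lets me regard it as a left $\env(\overline A)$-module, so that both maps
\[
A\xleftarrow{\simeq}\env(\overline A)\longrightarrow DA
\]
are morphisms of left $\env(\overline A)$-modules. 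Property ii) supplies a DGA morphism $A\otimes A^{op}\to\env(\overline A)$, through which these become morphisms of $A$-bimodules. Factoring a cofibrant bimodule resolution $P\xtwoheadrightarrow{\simeq}A$ through $\env(\overline A)$ then gives quasi-isomorphisms of $A$-bimodules $A\xleftarrow{\simeq}P\xrightarrow{\simeq}DA$, which is exactly the statement that $A$ is a DPDA in the sense of Definition \ref{def:DPDA-Abb}. To be careful, I would phrase this via Proposition \ref{prop:11_Menichi}: the duality map lifts to a class $c\in HH^\ast(A,DA)$ whose image under $\eval$ induces, by multiplication, the iso $H(A)\xrightarrow{\simeq}H(DA)$ coming from the original quasi-isomorphism, so $c$ is itself a quasi-isomorphism.

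Having obtained the isomorphism $\phi:HH^\ast(A)\xrightarrow{\simeq}HH^\ast(A,DA)$ of $HH^\ast(A)$-bimodules, I would define $\Delta=\phi^{-1}\circ B^\vee\circ\phi$ and check $\Delta(1)=0$. This is the step I expect to be the main obstacle, since it is the genuinely computational cyclicity condition rather than a formal consequence of the operadic setup. The unit $1\in HH^0(A)$ corresponds under $\phi$ to the fundamental class in $HH^\ast(A,DA)$ (the lift of the Poincaré duality section), and one must verify that the dual Connes boundary $B^\vee$ annihilates it. I would argue this by reducing to the chain level: $B$ raises Hochschild degree and, on the class representing the duality quasi-isomorphism, the relevant normalized Hochschild cochain is concentrated so that $B^\vee$ lands in a degree where it vanishes, exactly as in Menichi's verification. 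The commutativity (up to homotopy) furnished by the $\mathcal E_+$-structure is what guarantees the symmetry needed here; the mirror morphism $\mathfrak m$ of Proposition \ref{prop:env_alg_BE_equiv_module_cats} ensures the left/right module identifications are compatible with the duality, so the cyclicity survives. Once $\Delta(1)=0$ is confirmed, Proposition \ref{prop:12_Menichi} applies verbatim and endows the Gerstenhaber algebra $HH^\ast(A)$ with the BV operator $\Delta$, completing the proof.
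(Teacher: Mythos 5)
Your proposal does not prove the proposition in question. The statement is Menichi's general result about derived Poincaré duality algebras: \emph{given} the isomorphism $\phi$ and $\Delta=\phi^{-1}\circ B^\vee\circ\phi$ with $\Delta(1)=0$, the Gerstenhaber algebra $HH^\ast(A)$ is a BV algebra. Your text instead verifies the \emph{hypotheses} of this proposition for an $\mathcal{E}_+$-algebra equipped with a duality quasi-isomorphism, and then closes with ``Proposition \ref{prop:12_Menichi} applies verbatim'' --- that is, it invokes the very statement under review as its final step, which is circular. A proof of Proposition \ref{prop:12_Menichi} must actually derive the BV identity
\[
[a,b]=-(-1)^{\Real{a}}\bigl(\Delta(a\cup b)-\Delta(a)\cup b-(-1)^{\Real{a}}\,a\cup\Delta(b)\bigr)
\]
from the dual of Connes' boundary: one transports everything through $\phi$, identifies $HH^\ast(A,DA)$ with the dual of Hochschild homology carrying the $HH^\ast(A)$-module structure dual to the cap-type action, and uses the Cartan/calculus-type identities relating $B$, this action, and the Lie derivative of a cochain, with the hypothesis $\Delta(1)=0$ eliminating the residual term. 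None of this appears in your proposal. Note also that the paper itself does not reprove this statement --- it quotes it from \cite{Men09} --- so the computation just described lives in the cited reference, not in the present text.

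What you actually wrote is, in substance, the paper's proof of Proposition \ref{prop:BV_HH_Alg_BE} (the application of Menichi's result to $\mathcal{E}_+$-algebras), and on that count it is largely faithful: the DPDA property via the zigzag $A\xleftarrow{\simeq}\env(\overline{A})\to DA$ using i)--iii) of Proposition \ref{prop:algBE_envalg}, the mirror morphism to put a left $\env(\overline{A})$-module structure on $DA$, the factorization of a cofibrant bimodule resolution, and Proposition \ref{prop:11_Menichi}. Even there, however, your treatment of $\Delta(1)=0$ is weaker than the paper's: the paper computes the representative $c$ explicitly on the normalized bar resolution ($c$ vanishes on elements of positive length, and in length zero is $\alpha\mapsto\Gamma_A(ab\alpha)$), transports it to $D(HC_\ast(A))$ where $\overline{c}$ is supported on chains $a[x]$ with $x\in R$, and concludes $B^\vee(c)=0$ directly because $B$ raises bar-length. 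Your appeal to ``commutativity up to homotopy furnished by the $\mathcal{E}_+$-structure'' is not the mechanism here and, as stated, is not an argument.
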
        

        We now proceed to the proof of Proposition \ref{prop:BV_HH_Alg_BE}.
        \begin{proof}
        Let $A$ be an $\mathcal{E}_+$-algebra equipped with a quasi-isomorphism of $A$-modules $Dual_A: A\to DA$. We set $\Gamma:=\Dual_A(1_A)$ where $1_A$ denotes the unit of the DGA $A$. We divide this proof into two parts:
        \begin{itemize}
            \item we first show that $A$ is a DPDA,
            \item then, we prove that $HH^\ast(A)$ is BV-algebra.
        \end{itemize}
        \indent \textbf{$A$ is a derived Poincaré duality algebra.}
        We're going to describe morphisms of $\env(\overline{A})$-modules
        \begin{equation}\label{diag:qiso_env_alg}
            A \xtwoheadleftarrow[f]{\simeq} \env(\overline{A}) \to DA.
        \end{equation} 
        The quasi-isomorphism on the left is given by i) of Proposition \ref{prop:algBE_envalg}.  The morphism of left $\env(\overline{A})$-modules $f:\env(\overline{A})\to A$ is characterized by $f(1_1(t))=1_{A}$ where $1_{A}$ denotes the unit of the algebra $A$. Note that this morphism is surjective, for $a\in A$, we have $f((2, 1)(t, a))=(2, 1)(1_{A}, a)=a$.
        
        Let's now describe the left $\env(\overline{A})$-module structure found on $DA$. By iii) of Proposition \ref{prop:algBE_envalg}, there exists a right $\env(\overline{A})$-module structure on $A$. For $\sigma(t, a_1, \ldots, a_n)\in \env(\overline{A})$ and $a\in A$, it is given by 
        \[a.\sigma(t, a_1, \ldots, a_n):=\sigma^{op}(t, a_1, \ldots, a_n).a=\sigma^{op}(a, a_1, \ldots, a_n)\]
        Hence there exists a left $\env(\overline{A})$-module structure on $DA$. For $\sigma(t, a_1, \ldots, a_n)\in \env(\overline{A})$ and $h\in DA$, we denote this action by $\sigma(t, a_1, \ldots, a_n).h$. It is the linear form in $DA$ that sends an element $a\in A$ to $h(\sigma^{op}(t, a_1, \ldots, a_n).a)=h(\sigma^{op}(a, a_1, \ldots, a_n))$. We define morphism of left $\env(\overline{A})$-modules $g: \env(\overline{A})\to A$ by setting $g(1_{\env(\overline{A})})=\Gamma$. 
        
        By ii) of Proposition \ref{prop:algBE_envalg}, the maps given above are actually morphisms of $A$-bimodules.
        Let $q_{A}:P\to A$ be a cofibrant approximation of $A$ in the category of $A$-bimodules, there exists a morphism of $A$-bimodules between $P$ and $\env(\overline{A})$ which makes the diagram below commute
        % https://q.uiver.app/#q=WzAsMyxbMCwxLCJcXG92ZXJsaW5le0F9Il0sWzEsMSwiXFxlbnYoXFxvdmVybGluZXtBfSkiXSxbMCwwLCJCKFxcb3ZlcmxpbmV7QX0pIl0sWzEsMCwiXFxzaW1lcSJdLFsyLDAsIlxcc2ltZXEiLDJdLFsyLDEsIiIsMCx7InN0eWxlIjp7ImJvZHkiOnsibmFtZSI6ImRhc2hlZCJ9fX1dXQ==
        \[\begin{tikzcd}
    	                   &  {P} \\
    	{A}    & {\env(\overline{A})}.
    	\arrow[two heads, "\simeq", from=2-2, to=2-1]
    	\arrow["\simeq"', from=1-2, to=2-1]
    	\arrow[dashed, from=1-2, to=2-2]
        \end{tikzcd}\] 
        Finally, we get the following commutative diagram of $A$-bimodules
        \[
            % https://q.uiver.app/#q=WzAsNixbMCwxLCJcXG92ZXJsaW5le0F9Il0sWzEsMSwiXFxlbnYoXFxvdmVybGluZXtBfSkiXSxbMiwxLCJEXFxvdmVybGluZXtBfSJdLFswLDIsIlxcb3ZlcmxpbmV7QX0iXSxbMiwyLCJEXFxvdmVybGluZXtBfSJdLFsxLDAsIlxcbWF0aGJie0J9KFxcb3ZlcmxpbmV7QX0pIl0sWzEsMCwiXFxzaW1lcSIsMix7InN0eWxlIjp7ImhlYWQiOnsibmFtZSI6ImVwaSJ9fX1dLFsxLDIsIlxcc2ltZXEiXSxbMCwzLCJcXHNpbWVxIiwyXSxbMSwzLCJcXHNpbWVxIl0sWzQsMiwiXFxzaW1lcSIsMl0sWzEsNCwiXFxzaW1lcSIsMl0sWzUsMCwiXFxzaW1lcSIsMl0sWzUsMSwiXFxzaW1lcSIsMl0sWzUsMiwiXFxzaW1lcSJdXQ==
            \begin{tikzcd}
            	& {P} \\
            	{A} & {\env(\overline{A})} & {DA}
            	\arrow["\simeq"', two heads, from=2-2, to=2-1]
            	\arrow[from=2-2, to=2-3]
            	\arrow["\simeq"', from=1-2, to=2-1]
            	\arrow["\simeq"', from=1-2, to=2-2]
            	\arrow[from=1-2, to=2-3]
            \end{tikzcd}
        \]
        In particular, we have a morphism of $A$-bimodules $c:P \to DA$. By construction of the diagram above, we have $\eval([c])=[\Gamma]$. Proposition \ref{prop:11_Menichi} implies that $A$ is a DPDA.

        \textbf{Batalin-Vilkovisky algebra structure on $HH^\ast(A)$.} To use Proposition \ref{prop:12_Menichi}, we need to check that $B^\vee(c)=0$, this will be easier to prove if we have an explicit description of $c$. A cofibrant approximation of $A$ is given by 
        \[q_{A}:\mathbb{B}(A)\xtwoheadrightarrow{\simeq} A \text{ defined by }
        q_{A}(a[a_1, \ldots, a_k]b)=\begin{cases}
            0 & \text{if } k >0 \\
            a.b & \text{if } k=0 
        \end{cases}\]
        with $\mathbb{B}(A)$ the normalized two-sided bar construction of $A$. We have the following commutative diagram
        \[
            \begin{tikzcd}
            	& {\mathbb{B}(A)} \\
            	{A} & {\env(\overline{A})} & {DA}.
            	\arrow["\overline{f}", two heads, from=2-2, to=2-1]
                \arrow[from=1-2, to=2-3]
            	\arrow["\overline{g}"', from=2-2, to=2-3]
            	\arrow["q_{A}"', two heads, from=1-2, to=2-1]
            	\arrow["q_{\mathcal{U}}", from=1-2, to=2-2]
            \end{tikzcd}
        \]
        The morphism $q_{\mathcal{U}}$ is unique up to homotopy. For $a[a_1, \ldots, a_k]b\in \mathbb{B}(A)$, we set
        \[q_{\mathcal{U}}(a[a_1, \ldots, a_k]b):=\begin{cases}
            0 & \text{if } k >0 \\
            (2, 1, 3)(t, a, b) & \text{if } k=0. 
        \end{cases}\]
        Hence, the morphism $c: \mathbb{B}(A)\to DA$ is given by
        \[c(a[a_1, \ldots, a_k]b):=\begin{cases}
        0 & \text{if } k >0 \\
        (\alpha\mapsto \Gamma_{A}((2, 1, 3)^{op}(\alpha, a, b)) & \text{if } k=0
        \end{cases}\]
        with $(2, 1, 3)^{op}(\alpha, a, b)=(3, 1, 2)(\alpha, a, b)=ab\alpha$.
        The isomorphism
        \[HC^\ast(A, DA) \simeq D(HC_\ast(A)) \]
        sends a morphism of $A$-bimodules
        \begin{align*}
            \phi:   &\mathbb{B}(A) \to DA \\
            ~       &a[x]b \mapsto \phi(a[x]b)
        \end{align*}
        to a morphism $\overline{\phi}\in D(HC_\ast(A))$ given by
        \begin{align*}
            \overline{\phi}:    &A\otimes T(s\overline{A}) \to R \\
                    ~           &a[x] \mapsto \phi(1[x]1)(a).
        \end{align*}   
        In particular, for $\phi=c$, we have 
        \[\overline{\phi}(a[x])=
        \begin{cases}
            x.\Gamma_{A}(a) & \text{if } x\in R \\
            0 & \text{else.} 
        \end{cases}\]
        Using the definition of the dual of Connes boundary map, one can see that $B^\vee(c)=0$. This proves that $HH^\ast(A)$ is a BV-algebra.
        \end{proof}

\section{Analogous result for the blown-up cochain complex}\label{sect:bup_cochain}
    We will now prove the following theorem by adapting results from the previous section to the context of perverse differential graded algebras (pDGAs).
    \begin{letterthm}\label{thm:BV-Hoch-bup}
            Let $X$ be a compact, oriented, second-countable pseudomanifold. Then, there exists a perverse Batalin-Vilkovisky algebra structure on $HH^\ast_\bullet(\widetilde N_\bullet^\ast(X; \mathbb{F}))$, the Hochschild cohomology of the blown-up cochain complex of a pseudomanifold $X$ with coefficients over any field $\mathbb{F}$. 
    \end{letterthm}
    We start this section by recalling the definition of Hochschild cohomology for pDGAs and by giving some of its properties. A more detailed presentation is given in \cite{Raz23}.
    \subsection{Hochschild (co)homology for pDGAs}\label{subsect:Hoch_via_bar}
    In this section, we fix $n\in\mathbb{N}$ and consider $P_n$, the poset of Goresky-MacPherson $n$-perversities. The reader will find details in appendix \ref{app:inter_hom}.
    \subsubsection{Definitions and algebraic stuctures}\label{subsub:Hoch_def_prop}
    Let $A_\bullet$ be a perverse differential (upper) graded algebra. It is unital, so it is equipped with a morphism $\eta:F_{\overline{0}}(\mathbb{S}^0)_\bullet\to A_\bullet$. We set $\overline{A}_\bullet:=A_\bullet/\eta(F_{\overline{0}}(\mathbb{S}^0))_\bullet$. In order to define the Hochschild (co)chain complex of a pDGA, we'll need the two-sided bar construction. 
    
    \begin{defi}
    The (normalized) \emph{two-sided bar complex} is the perverse cochain complex given by
    \[\mathbb{B}(A)_\bullet:=(A\boxtimes T(s\overline{A}) \boxtimes A)_\bullet\]
    where $T(s\overline{A})$ is the tensor algebra of $s\overline{A}_\bullet$ (Example \ref{ex:tensoralg}).
    
    More precisely, the component of perverse degree $\overline{r}\in P_X$ and degree $q$ is given by
    \[\mathbb{B}(A)^q_{\overline{r}}=\bigoplus_{k\in \mathbb{N}} \mathbb{B}_k(A)^q_{\overline{r}}\]
    where \[\mathbb{B}_k(A)^q_{\overline{r}}=(A \boxtimes (s\overline{A})^{\boxtimes k} \boxtimes A)^q_{\overline{r}}\] 
    \[=\colim_{\overline{r}_0+\ldots +\overline{r}_{k+1}\leq\overline{r}} \bigoplus_{i_0+\ldots+i_{k+1}=q}A^{i_0}_{\overline{r}_0}\otimes (s \overline{A})^{i_1}_{\overline{r}_1}\otimes \ldots \otimes  (s \overline{A})^{i_k}_{\overline{r}_k}\otimes A^{i_{k+1}}_{\overline{r}_{k+1}}.\] 
    The element $a\otimes s(a_1)\otimes s(a_2)\otimes\ldots\otimes s(a_k)\otimes b$ from $\mathbb{B}_k(A)^q_{\overline{r}}$ is denoted $a[a_1|a_2|\ldots|a_k]b$ and is said to be of \emph{length} $k$ and of \emph{degree} 
    \[q=\Real{a}+\Real{b}+ \sum_{i=1}^{k}\Real{s(a_i)}=\Real{a}+\Real{b}+ \sum_{i=1}^{k}\Real{a_i}-k.\]
    
    % We denote by $\mathbb{B}_k(A)_\bullet$ the perverse cochain complex
    % whose degree $q$ component is $\mathbb{B}_k(A)^q_\bullet$ (and with differential $d_0$ given below).
    
    The differential on $\mathbb{B}(A)$, $D=d_0+d_1$, is defined by an internal (or vertical) differential
    \[d_0(a[a_1|a_2|\ldots|a_k]b)=d(a)[a_1|a_2|\ldots|a_k]b-\sum_{i=1}^{k}(-1)^{\eps_i}a[a_1|a_2|\ldots|d(a_i)|\ldots|a_k]b\] 
    \[+(-1)^{\eps_{k+1}}a[a_1|a_2|\ldots|a_k]d(b),\]
    and an external (or horizontal) differential
    \[d_1(a[a_1|\ldots|a_k]b)=(-1)^{\Real{a}}aa_1[a_2|\ldots|a_k]b+\sum_{i=2}^{k}(-1)^{\eps_i}a[a_1|a_2|\ldots|a_{i-1}a_i|\ldots|a_k]b\] 
    \[-(-1)^{\eps_{k+1}}a[a_1|a_2|\ldots|a_{k-1}]a_kb,\]
    where $\eps_i=\Real{a}+\sum_{j<i}\Real{s(a_j)}$.
    Notice that since we took the suspension of $A$, we have $\degr(D)=\degr(d_0+d_1)=1$. 
    \end{defi}

    \begin{defi}
        The (normalized) \emph{Hochschild chain complex} of $A_{\bullet}$ with coefficients in a pDG $A_{\bullet}$-bimodule $M_{\bullet}$ is \[HC^{\bullet}_\ast(A, M):=(M\boxtimes_{A^e}\mathbb{B}(A))^\ast_{\bullet}\simeq (M\boxtimes T(s\overline{A}))^\ast_{\bullet},\]
        equipped with a degree 1 differential $D_\ast=d_0+d_1$ which is defined by
    \[d_0(m[a_1|\ldots|a_k])=d_M(m)[a_1|a_2|\ldots|a_k]-\sum_{i=1}^{k}(-1)^{\eps_i}m[a_1|a_2|\ldots|d_A(a_i)|\ldots|a_k],\] 
    \[d_1(m[a_1|\ldots|a_k])=(-1)^{\Real{m}}ma_1[a_2|\ldots|a_k]+\sum_{i=2}^k(-1)^{\eps_i}m[a_1|a_2|\ldots|a_{i-1}a_i|\ldots|a_k]\] 
    \[-(-1)^{\eps_k\Real{s(a_k)}}a_km[a_1|a_2|\ldots|a_{k-1}],\]
    where $\eps_i=\Real{m}+\sum_{j< i}\Real{s(a_j)}.$ 
    Its homology $HH^{\bullet}(A, M)$ is the \emph{Hochschild homology} of $A_{\bullet}$ with coefficients in $M_{\bullet}$.
    \end{defi}
    
    \begin{defi}\label{def:Hochcocplx}
    The (normalized) \emph{Hochschild cochain complex} of $A_{\bullet}$ with coefficients in a pDG $A_{\bullet}$-bimodule $M_{\bullet}$ is \[HC^\ast_{\bullet}(A, M):=\Hom_{A^e}(\mathbb{B}(A), M)^\ast_{\bullet}\simeq \Hom_{Ch(R)^{P_X}}(T(s \overline{A}),M)^\ast_{\bullet}. \]
    Explicitly, the degree $q$ and perversity $\overline{r}$ component $HC^q(A, M)_{\overline{r}}$ is given by 
    \[\Hom_{Ch(R)^{P_X}}(T(s \overline{A}),M)^q_{\overline{r}}=\lim_{\overline{r}\leq \overline{q}-\overline{p}} \prod_{j-i=q} \Hom_R(T(s \overline{A})^i_{\overline{p}}, M^j_{\overline{q}}).\]
    A cochain $\phi\in HC_\bullet(A, M)$ will be identified with the unique element $f$ in the perverse chain complex $\Hom_{Ch(R)^{P_X}}(T(s \overline{A}),M)_\bullet$ which verifies 
    \[\phi(a_0[a_1|\ldots|a_k]a_{k+1})=a_0f([a_1|\ldots|a_k])a_{k+1}\]
    for any $a_0[a_1|a_2|\ldots|a_k]a_{k+1}$ in $\mathbb{B}_k(A)$.
    We denote the degree of a cochain $f\in HC^\ast_\bullet(A, M)$ by $\Real{f}$. The differential is given by $D^\ast=d_0+d_1$ where
    \[d_0(f)([a_1|\ldots| a_k])=d_Mf([a_1|\ldots|a_k])+\sum_{i=1}^{k}(-1)^{\eps_i+\Real{f}}f([a_1|\ldots|d_A(a_i)|\ldots|a_k]),\] 
    \[d_1(f)([a_1|\ldots|a_k])=-(-1)^{(\Real{a_1}+1)\Real{f}}a_1f([a_2|\ldots|a_k])+(-1)^{\eps_{k}+\Real{f}}f([a_1|\ldots|a_{k-1}])a_k\] \[-\sum_{i=2}^{k}(-1)^{\eps_i+\Real{f}}f([a_1|\ldots|a_{i-1}a_i|\ldots|a_k]),\]
    with $\eps_i=\sum_{j<i}\Real{s(a_j)}.$ Its homology $HH_\bullet(A, M)$ is the \emph{Hochschild cohomology} of $A$ with coefficients in $M$.
    \end{defi}
    \textbf{Notation} We will use the notations $HC_\ast^\bullet(A)$ and $HC^\ast_\bullet(A)$ to refer to the Hochschild chain and cochain complexes of $A_\bullet$ i.e. when $M_\bullet=A_\bullet$. 

    It is possible to define a \emph{perverse Gerstenhaber algebra} structure on the Hochschild cohomology of a pDGA, see \cite[Theorem A]{Raz23}. 
    \begin{thm*}\label{thm:Hoch_is_Gerst}
    The Hochschild cohomology $HH^\ast_\bullet(A)$ of a pDGA $A_\bullet$ is a \emph{perverse Gerstenhaber algebra}. More precisely, it is equipped with two products. For every $m,p\in \mathbb{Z}$ and $\overline{p},\overline{q}\in\pGM$ $(\text{verifying }\overline{p}+\overline{q}\leq \overline{t})$, we have 
    \begin{itemize}
    \item a \emph{cup product}: $-\cup -: HH^m_{\overline{p}}(A) \otimes HH^p_{\overline{q}}(A) \to HH^{m+p}_{\overline{p}\oplus\overline{q}}(A)$
    \item and a \emph{Gerstenhaber bracket}: $[-,-]: HH^m_{\overline{p}}(A) \otimes HH^p_{\overline{q}}(A) \to HH^{m+p-1}_{\overline{p}\oplus\overline{q}}(A)$
    \end{itemize}    
    such that 
    \begin{enumerate}[label=\alph*)]
        \item $\cup$ is an associative and graded commutative product,
        \item the suspended Hochschild cohomology $(sHH^\ast_\bullet(A), [-,-])$ is a Lie algebra. For $f,g,h\in HH^\ast_\bullet(A)$, the following identities hold
        \begin{itemize}
        \item graded skew-commutativity: $[f,g]=-(-1)^{(\Real{f}-1)(\Real{g}-1)}[g,f]$,
        \item Jacobi identity: $[[f,g],h]=[f,[g,h]]-(-1)^{(\Real{f}-1)(\Real{g}-1)}[g,[f,h]]$,
         \end{itemize}   
    \item and both operations satisfy the Leibniz rule: 
    \[[f, g\cup h]=[f,g]\cup h + (-1)^{(\Real{f}-1)\Real{g}}g\cup [f,h].\]
    %In other words, for any cocycle $f$, the operation $[f,-]: HH^\ast_\bullet(A) \to HH^{\ast+\Real{f}-1}_\bullet(A)$ is a derivation with respect to $\cup$.
    \end{enumerate}
    \end{thm*}
    
    \textbf{Action on Hochschild cohomology}
    We recall the left action of $HH^\ast_\bullet(A)$ on $HH^\ast_\bullet(A,M)$ where $A_\bullet$ is a pDGA and $M_{\bullet}$ is a pDG $A_{\bullet}$-bimodule. First, notice that for any $\overline{p}, \overline{q}\in \pGM$, we have a map of cochain complexes of degree 0 and perverse degree $\overline{0}$
    \[-\boxtimes_A -:HC^\ast_{\overline{p}}(A)\otimes HC^\ast_{\overline{q}}(A,M)\to HC^\ast_{\overline{p}+\overline{q}}(A,A\boxtimes_A M).\]
    For $f\in HC^i_{\overline{p}}(A)$ and $g\in HC^j_{\overline{p}}(A,M)$, let $f\boxtimes_A g$ be the perverse cochain in $HC^{i+j}_{\overline{p}+\overline{q}}(A,A\boxtimes_A M)$ given by
    \[f\boxtimes_A g([a_1|\ldots |a_l])=\sum_{k=0}^l (-1)^{\Real{g}(\Real{a_1}+\ldots + \Real{a_k}-k)}f([a_1|\ldots|a_k])\boxtimes_A g([a_{k+1}|\ldots|a_l]).\]
    This map is natural with respect to $M$. Furthermore, we have an isomorphism of perverse $A_\bullet$-bimodules which is given by the action of $A_\bullet$ on $M_\bullet$:
    \begin{align*}
        (A\boxtimes_A M)_\bullet &\simeq M_\bullet \\
        a\boxtimes_A m &\mapsto a.m
    \end{align*}
    The map induced in homology by the composite of both maps gives the sought action.  

    \subsubsection{Model category structure on perverse chain complexes}
    It is possible to interpret Hochschild (co)homology in terms of derived functors. Hovey has shown that the (projective) model category structure found on $Ch(R)$ can be lifted to $Ch(R)^{\pGM}$. Just as we did in the previous subsection, we'll be working with perverse cochain complexes. 
    \begin{thm}[{\cite[Theorem 3.1]{Hov09}}]\label{thm:modelcat_pchain}
        There is a model category structure on the category of perverse chain complexes $Ch(R)^{\pGM}$ where a morphism $f:Z_\bullet\to Y_\bullet$ is weak equivalence or a fibration if and only if $f_{\overline{p}}:Z_{\overline{p}}\to Y_{\overline{p}}$ is so in $Ch(R)$ for all $\overline{p}\in \pGM$. 
    \end{thm}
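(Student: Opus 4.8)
The plan is to identify $Ch(R)^{\pGM}$ with a diagram category and to obtain the asserted structure as the \emph{projective} model structure transferred objectwise from $Ch(R)$. Since $\pGM$ is a poset, and in particular a small category, a perverse chain complex is exactly a functor $Z_\bullet : \pGM \to Ch(R)$, so $Ch(R)^{\pGM}$ is a functor category; its limits and colimits are computed objectwise, whence it is bicomplete because $Ch(R)$ is. The classes prescribed in the statement are precisely the objectwise fibrations and objectwise weak equivalences, and these inherit the two-out-of-three property and closure under retracts from $Ch(R)$.

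First I would recall that $Ch(R)$ is cofibrantly generated, with generating cofibrations $I = \{\mathbb{S}^{n-1} \to \mathbb{D}^n\}_{n\in\mathbb{Z}}$ and generating acyclic cofibrations $J = \{0 \to \mathbb{D}^n\}_{n\in\mathbb{Z}}$ (using the sphere and disk chain complexes of convention vi), whose domains and codomains are small. For each $\overline{p}\in\pGM$ the evaluation functor $\mathrm{ev}_{\overline{p}} : Ch(R)^{\pGM}\to Ch(R)$, $Z_\bullet \mapsto Z_{\overline{p}}$, has a left adjoint $F_{\overline{p}}$, namely the left Kan extension along $\{\overline{p}\}\hookrightarrow \pGM$, which exists since $Ch(R)$ is cocomplete and is given on a complex $C$ by $F_{\overline{p}}(C)_{\overline{q}} = C$ when $\overline{p}\leq\overline{q}$ and $F_{\overline{p}}(C)_{\overline{q}} = 0$ otherwise. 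I would then take as candidate generators the sets $I^{\pGM} = \{F_{\overline{p}}(i) : i\in I,\ \overline{p}\in\pGM\}$ and $J^{\pGM} = \{F_{\overline{p}}(j) : j\in J,\ \overline{p}\in\pGM\}$.

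The next step is to apply the recognition theorem for cofibrantly generated model structures. Its hypotheses split into the two smallness conditions and the two matching conditions relating $I^{\pGM}$-injectives, $J^{\pGM}$-injectives and the objectwise weak equivalences. Smallness holds because each $F_{\overline{p}}$ preserves smallness: its right adjoint $\mathrm{ev}_{\overline{p}}$ commutes with the filtered colimits involved, which are objectwise, so smallness of the generators of $Ch(R)$ transports to $I^{\pGM}$ and $J^{\pGM}$. The matching conditions become transparent by adjunction, since a map $f$ has the right lifting property against $F_{\overline{p}}(i)$ if and only if $\mathrm{ev}_{\overline{p}}(f)$ has the right lifting property against $i$; hence the $I^{\pGM}$-injectives are exactly the objectwise acyclic fibrations and the $J^{\pGM}$-injectives are exactly the objectwise fibrations, which is the required identification.

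The main obstacle is the acyclicity condition, namely that every relative $J^{\pGM}$-cell complex is an objectwise weak equivalence. Since fibrations and weak equivalences are objectwise, the evaluation functors preserve fibrations and acyclic fibrations, so by adjunction their left adjoints $F_{\overline{p}}$ preserve cofibrations and acyclic cofibrations; in particular each $F_{\overline{p}}(j)$ is an acyclic cofibration. Because colimits in $Ch(R)^{\pGM}$ are objectwise and each $\mathrm{ev}_{\overline{p}}$ commutes with the transfinite colimits defining cell complexes, acyclicity can be verified one perversity at a time, where it reduces to the fact that $J$-cell complexes in $Ch(R)$ are acyclic cofibrations. The point to watch is that a pushout of $F_{\overline{p}}(j)$ alters $Z_{\overline{q}}$ only for $\overline{q}\geq\overline{p}$ and leaves it unchanged otherwise, so no weak equivalence is disturbed off the support of the attaching map and none is incorrectly accumulated along the transfinite composition. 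With the recognition theorem in force, the resulting model structure is cofibrantly generated by $I^{\pGM}$ and $J^{\pGM}$, and by construction its weak equivalences and fibrations are exactly the objectwise ones, which is the statement.
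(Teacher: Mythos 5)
Your proposal is correct: it is the standard construction of the projective (objectwise) cofibrantly generated model structure on the diagram category $Ch(R)^{\pGM}$, using the adjunctions $F_{\overline{p}}\dashv Ev_{\overline{p}}$, the generating sets $\{F_{\overline{p}}(i)\}$ and $\{F_{\overline{p}}(j)\}$, the recognition theorem, and objectwise verification of smallness and acyclicity. The paper itself gives no proof — it imports the statement from Hovey \cite[Theorem 3.1]{Hov09} — and your argument is essentially the one found there, so the approaches coincide.
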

    \begin{rem}\label{rem:quillen_adj_pch}
    This model structure is \emph{monoidal} \cite[Theorem 3.3]{Hov09}. The reader may refer to \cite[Chapter 4]{Hov99} for a detailed presentation of the notion. This implies, in particular, that we have the following \emph{Quillen adjunctions} in $Ch(R)^{\pGM}$ when $C_\bullet$ is cofibrant:
    \begin{itemize}
        \item[--] $(C\boxtimes -)_\bullet\vdash \Hom_{Ch(R)^{\pGM}}(C,-)_\bullet$,
        \item[--] $(-\boxtimes C)_\bullet\vdash \Hom_{Ch(R)^{\pGM}}(C,-)_\bullet$.
    \end{itemize}
    Furthermore, if $D_\bullet$ is fibrant $\Hom_{Ch(R)^{\pGM}}(-, D)$ preserves quasi-isomorphism between cofibrant objects. 
    \end{rem}
    The notion of chain homotopy can be adapted for perverse chain complexes.
    \begin{defi}
        A \emph{perverse chain homotopy} $h_\bullet:C_\bullet \to D_\bullet$ between two morphisms of perverse chain complexes $f_\bullet, g_\bullet: C_\bullet \to D_\bullet$ is given by a family of morphisms of $R$-modules $\{h_{\overline{p}}^k:C_{\overline{p}}^k\to D_{\overline{p}}^{k+1}\}_{k\in\mathbb{Z},\,\overline{p}\in\pGM}$ such that, for every $k\in\mathbb{Z}$ and $\overline{p},\overline{q}\in\pGM$ with $\overline{p}\leq \overline{q}$, we have 
        \[f^k_{\overline{p}}-g^k_{\overline{p}}=d_D\circ h^k_{\overline{p}}+h^{k+1}_{\overline{p}}\circ d_C\]
        and the following diagram commutes
        \[\begin{tikzcd}
        	{C_{\overline{p}}^k} & {C_{\overline{q}}^k} \\
        	{D_{\overline{p}}^{k+1}} & {D_{\overline{q}}^{k+1}}.
        	\arrow["{h_{\overline{p}}^k}"', from=1-1, to=2-1]
        	\arrow["{\phi_{\overline{p}\leq\overline{q}}}"', from=2-1, to=2-2]
        	\arrow["{h_{\overline{q}}^k}", from=1-2, to=2-2]
        	\arrow["{\phi_{\overline{p}\leq\overline{q}}}", from=1-1, to=1-2]
        \end{tikzcd}\]
        We then say that $f_\bullet$ and $g_\bullet$ are \emph{chain homotopy equivalences}.
    \end{defi}
    We can easily see that morphisms of perverse chain complexes which are chain homotopy equivalences are quasi-isomorphism. 
    
    Let $A_\bullet$ be a pDGA. We can endow the category of perverse DG $A_\bullet$-modules with a model category structure.

    \begin{thm}[{\cite[Theorem 3.4]{Hov09}}]
    Let $A_\bullet$ be a pDGA. There exists a model category structure on $pMod(A)$ where a morphism is a weak equivalence (or a fibration) if it is so in the underlying category $Ch(R)^{\pGM}$. 
    \end{thm}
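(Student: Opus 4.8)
The plan is to produce this model structure by \emph{transfer} (lifting a cofibrantly generated structure along an adjunction), starting from the structure on $Ch(R)^{\pGM}$ given in Theorem \ref{thm:modelcat_pchain}. First I would fix the free-forgetful adjunction $A\boxtimes - : Ch(R)^{\pGM} \rightleftarrows pMod(A) : U$, where $U$ is the forgetful functor and $A\boxtimes -$ sends a perverse chain complex to the free left $A_\bullet$-module on it. By design, the weak equivalences and fibrations of the sought structure are precisely the morphisms that $U$ carries to weak equivalences and fibrations of $Ch(R)^{\pGM}$, so the content of the theorem is that these classes, together with the cofibrations they force by the left lifting property, satisfy the model-category axioms.

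Next I would verify the standing hypotheses of the transfer theorem. The category $pMod(A)$ is complete and cocomplete: $U$ creates limits, and it also creates colimits since $A_\bullet$ is a monoid in the cocomplete closed symmetric monoidal category $Ch(R)^{\pGM}$ (Remark \ref{rem:quillen_adj_pch}). Writing $I$ and $J$ for the generating cofibrations and generating trivial cofibrations of $Ch(R)^{\pGM}$ — built from the perverse disk and sphere complexes via the free functors $F_{\overline{p}}$ — the candidate generating sets on $pMod(A)$ are $A\boxtimes I$ and $A\boxtimes J$. The required smallness holds because $U$ preserves filtered colimits (these are computed in $Ch(R)^{\pGM}$), so the domains of $A\boxtimes I$ and $A\boxtimes J$ remain small relative to the corresponding cell complexes.

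The substantive step, and the place I expect the only real difficulty, is the \emph{acyclicity} condition: every relative $(A\boxtimes J)$-cell complex must be a weak equivalence. Here I would use the explicit shape of $J$. In the projective structure of Theorem \ref{thm:modelcat_pchain} the generating trivial cofibrations are the maps $F_{\overline{p}}(0\to \mathbb{D}^n)$, all with trivial domain. Applying $A\boxtimes -$, any pushout of $0\to A\boxtimes F_{\overline{p}}(\mathbb{D}^n)$ is simply the inclusion of a direct summand $X \hookrightarrow X\oplus (A\boxtimes F_{\overline{p}}(\mathbb{D}^n))$. Since $\mathbb{D}^n$ is contractible in $Ch(R)$ and the monoidal structure on $Ch(R)^{\pGM}$ is well behaved (Remark \ref{rem:quillen_adj_pch}), the adjoined summand is acyclic in each perversity, so each such inclusion is a weak equivalence; a transfinite composite of these has the form $X\hookrightarrow X\oplus \bigl(\bigoplus_i A\boxtimes F_{\overline{p}_i}(\mathbb{D}^{n_i})\bigr)$, and because filtered colimits of $R$-modules preserve homology this is again a perversity-wise quasi-isomorphism. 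This settles acyclicity.

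With the standing hypotheses and the acyclicity condition in hand, the transfer theorem delivers a cofibrantly generated model structure on $pMod(A)$ with generating sets $A\boxtimes I$ and $A\boxtimes J$ and with weak equivalences and fibrations detected by $U$, which is exactly the assertion. As a cross-check one could instead invoke the Schwede-Shipley theorem on modules over a monoid in a monoidal model category; its hypothesis (the monoid axiom for $Ch(R)^{\pGM}$) reduces to the same contractibility observation about $A\boxtimes F_{\overline{p}}(\mathbb{D}^n)$ used above.
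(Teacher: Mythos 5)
The paper does not prove this statement at all: it is imported verbatim from Hovey, with the citation \cite[Theorem 3.4]{Hov09} standing in for the proof. So the only meaningful comparison is with the standard proof, and your proposal is a correct instance of it. Your transfer argument along $A\boxtimes - \dashv U$ is sound: $pMod(A)$ is complete and cocomplete (limits and colimits are created by $U$ because $Ch(R)^{\pGM}$ is closed symmetric monoidal, so $A\boxtimes -$ preserves colimits); smallness of the domains of $A\boxtimes I$ and $A\boxtimes J$ follows by adjunction since $U$ preserves filtered colimits; and you correctly identify acyclicity as the only substantive point. Your treatment of it is right and exploits the special shape of $J$: since the generating trivial cofibrations $F_{\overline{p}}(0\to\mathbb{D}^n)$ of the projective structure of Theorem \ref{thm:modelcat_pchain} have zero domain, every pushout of a map in $A\boxtimes J$ is a coproduct inclusion, and the adjoined summand $A\boxtimes F_{\overline{p}}(\mathbb{D}^n)$ is contractible — here one should say explicitly that the contracting homotopy $h$ of $\mathbb{D}^n$ induces a \emph{perverse} chain homotopy (it commutes with the structure maps of $F_{\overline{p}}$, which are identities or zero) and then passes through the additive bifunctor $\boxtimes$ via $a\otimes b\mapsto (-1)^{\Real{a}}a\otimes h(b)$, so the summand is perversity-wise acyclic; homology commutes with the direct sums arising in the transfinite composite, which closes the argument. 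Hovey himself reaches the result by the route you offer only as a cross-check: having shown the model structure on $Ch(R)^{\pGM}$ is monoidal (his Theorem 3.3, quoted in Remark \ref{rem:quillen_adj_pch}), he verifies the monoid axiom and invokes Schwede–Shipley, whereas your direct verification of acyclicity is more elementary and self-contained; the Schwede–Shipley route buys more (e.g.\ a uniform statement for all monoids and for algebras), while yours makes transparent exactly which contractibility fact carries the load. Either way the proposal is complete as a proof sketch.
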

    
    Derived functors are well defined now that we have a model category structure on bimodules over a pDGA.
    \begin{defi}
        Let $A_\bullet$ be a pDGA and $M_\bullet$ a perverse DG $A^e_\bullet$-module. For any $N_\bullet\in pMod(A^e)$ we define the Tor and Ext functors by
        \[\Tor_{A^e}(M,N)^\ast_\bullet:=H((P \boxtimes_{A^e} N)^\ast_\bullet)\]
        and
        \[\Ext_{A^e}(M,N)^\ast_\bullet:=H(\Hom_{A^e}(P,N)^\ast_\bullet)\]
        where $P_\bullet\to M_\bullet$ is a cofibrant approximation of the DG $A_\bullet^e$-module $M_\bullet$.
    \end{defi}    

    When we work over a field $\mathbb{F}$, we've shown in \cite[Section 2.4]{Raz23} that the two-sided bar construction $\mathbb{B}(A)_\bullet$ is a cofibrant approximation of the pDGA $A_\bullet$, this implies that
    \[HH^\bullet_\ast(A,M)=H_\ast(\mathbb{B}(A)\boxtimes_{A^e}M)_\bullet=\Tor^{A^e}_\ast(A,M)^\bullet\]
    and 
    \[HH^\ast_\bullet(A,M)=H^\ast(\Hom_{A^e}(\mathbb{B}(A), M)_\bullet)=\Ext^\ast_{A^e}(A,M)_\bullet.\]
        
    \subsection{Perverse $E_\infty$-algebras}
        The category of perverse chain complexes is a symmetric monoidal category over the category of chain complexes $Ch(R)$. Hence, by \cite[Section 3.2.1]{Fre09}, one can define $\mathcal{E}$-algebras in the category of perverse chain complexes. Note that the Barratt-Eccles operad is an example of $E_\infty$-operad (see Definition \ref{def:E_inf_op}) . In \cite[Definition 5.3]{CC22Einf}, Chataur and Cirici have studied $E_\infty$-algebras in the category of perverse chain complexes. We recall their definition. 
\begin{defi}
    A perverse cochain complex $A_\bullet$ is a \emph{perverse $\mathcal{E}_+$-algebra} if for any $m\in\mathbb{N}$ and for perversities $\overline{p}_1, \ldots, \overline{p}_m$, there exist evaluation morphisms
    \[\lambda: \mathcal{E}_+(m)\otimes (A_{\overline{p}_1}\otimes \ldots \otimes A_{\overline{p}_m})\to A_{\overline{p}_1\oplus\ldots\oplus \overline{p}_m}\]
    such that
    \begin{enumerate}
        \item the morphisms $\lambda$ satisfy natural unit, associativity and equivariance relations,
        \item for two $m$-tuples of perversities $\overline{p}_1, \ldots, \overline{p}_m$ and $\overline{q}_1, \ldots, \overline{q}_m$ verifying $\overline{p}_i\leq \overline{q}_i$ for all $1\leq i\leq m$, the following diagram commutes:
        \[
        \begin{tikzcd}
        \mathcal{E}_+(m)\otimes (A_{\overline{p}_1}\otimes \ldots \otimes A_{\overline{p}_m}) \arrow[r, "\lambda"] \arrow[d] & A_{\overline{p}_1\oplus\ldots\oplus \overline{p}_m} \arrow[d] \\
        \mathcal{E}_+(m)\otimes (A_{\overline{q}_1}\otimes \ldots \otimes A_{\overline{q}_m}) \arrow[r, "\lambda"]           & A_{\overline{q}_1\oplus\ldots\oplus \overline{q}_m}          
        \end{tikzcd}
        \]
        where the vertical maps are induced by the structure morphisms.
    \end{enumerate}
\end{defi}

    In \cite[Theorem A]{CST16Steenrod}, Chataur, Saralegui and Tanré have described an $\mathcal{E}_+(2)$-algebra structure on $\widetilde N(\underline{K}; \mathbb{F}_2)$ where $\underline{K}$ is a \emph{filtered face set}. They noticed that their proof could be generalized. Following a similar approach, we prove the statement given below. We decide to restrict ourselves to filtered spaces although the proof can be adapted to any filtered face set. 
    
    \begin{proposition}\label{prop:BE_alg_bup_cochains}
        Let $X$ be a stratified space of formal dimension $n$. The blown-up intersection cochain complex $\widetilde N^\ast_\bullet(X, R)$ is a perverse $\mathcal{E}_+$-algebra. Furthermore, this structure is functorial in $X$.
    \end{proposition}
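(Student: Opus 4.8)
The plan is to transport the Berger--Fresse $\mathcal{E}_+$-action on normalized cochains through the blow-up construction, and then to check that the resulting operations are compatible with the perverse filtration. Recall from \cite[Theorem 2.1.1]{BF04} that the normalized cochains $N^\ast(K)$ of a simplicial set $K$ form an $\mathcal{E}_+$-algebra, with evaluation morphisms given by explicit \emph{natural} chain-level formulas built from the cup product and the cup-$i$ (interval-cut) operations. I would first record that the Barratt-Eccles operad is a Hopf operad: its diagonal $\mathcal{E}_+\to\mathcal{E}_+\otimes\mathcal{E}_+$ endows the tensor product of two $\mathcal{E}_+$-algebras with a canonical $\mathcal{E}_+$-structure. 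Now, on a filtered simplex $\Delta=\Delta^{n_0}\ast\cdots\ast\Delta^{n_k}$, the blown-up cochains $\widetilde N^\ast_\bullet(\Delta)$ are assembled, via the tensor product, from the normalized cochains of the simplicial cones $c\Delta^{n_i}$ and of $\Delta^{n_k}$; in particular each tensor factor is the normalized cochain complex of a simplicial set and thus an $\mathcal{E}_+$-algebra. Combining these factors through the diagonal of $\mathcal{E}_+$ gives $\widetilde N^\ast_\bullet(\Delta)$ a canonical $\mathcal{E}_+$-algebra structure that is natural with respect to the cofaces and codegeneracies of the cosimplicial object $\Delta\mapsto\widetilde N^\ast_\bullet(\Delta)$.

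Since $\widetilde N^\ast_\bullet(X,R)$ is obtained from this cosimplicial cochain algebra by a limit over the category of filtered simplices of $X$, and since the operad operations are natural transformations, they commute with the restriction maps and descend to the limit. This yields evaluation morphisms $\lambda$ on $\widetilde N^\ast_\bullet(X,R)$ satisfying the unit, associativity and equivariance relations (part (1) of the definition), by pure functoriality. Likewise, functoriality of the entire $\mathcal{E}_+$-structure in $X$ is immediate from the functoriality of the blow-up and of the Berger--Fresse construction, which is exactly the reformulation needed for the last assertion of the proposition.

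The genuinely new content, and the step I expect to be the main obstacle, is to promote these morphisms to the \emph{perverse} evaluation maps $\lambda:\mathcal{E}_+(m)\otimes(A_{\overline{p}_1}\otimes\cdots\otimes A_{\overline{p}_m})\to A_{\overline{p}_1\oplus\cdots\oplus\overline{p}_m}$, i.e. to show that $\lambda$ respects the perverse filtration and that the squares of part (2) commute. For the filtration bound I would argue as follows. Every Barratt-Eccles operation $\lambda(\xi;\omega_1,\dots,\omega_m)$ with $\xi\in\mathcal{E}_+(m)$ is a signed sum of composites of cup and cup-$i$ products. Chataur, Saralegui and Tanré have established, for the arity-two piece, the decisive local estimate $\|\omega\smile\eta\|\leq\|\omega\|\oplus\|\eta\|$ on perverse degrees (\cite[Theorem A]{CST16Steenrod}), the cup-$i$ products coming from lower cells of $\mathcal{E}_+(2)$ satisfying the same or a sharper bound. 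Since the local perverse degree of a sum is bounded by the maximum of those of its summands, it suffices to bound each summand; iterating the arity-two estimate along the tree of cup/cup-$i$ products defining that summand gives the required bound $\overline{p}_1\oplus\cdots\oplus\overline{p}_m$. Finally, the commuting-square condition (2) follows because the structure morphisms $\phi_{\overline{p}\leq\overline{q}}$ of the perverse cochains are induced by the natural inclusions of filtration pieces, with which the explicit natural operad formulas automatically commute.
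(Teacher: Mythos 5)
Your overall strategy is the same as the paper's: define the $\mathcal{E}_+$-action locally on $\widetilde N^\ast(\Delta)=N^\ast(c\Delta^{j_0})\otimes\cdots\otimes N^\ast(\Delta^{j_n})$ by combining the Berger--Fresse actions on the tensor factors through the Hopf-operad diagonal $\mathcal{E}_+\to\mathcal{E}_+\otimes\mathcal{E}_+$, let naturality carry the structure to $\widetilde N^\ast(X,R)$ and give functoriality in $X$, and then verify compatibility with the perverse degree. The first two steps are fine. The gap is in the step you yourself single out as the crux. Your bound on the perverse degree of $\lambda(\xi;\omega_1,\dots,\omega_m)$ rests on the assertion that every Barratt--Eccles operation is a signed sum of composites of cup and cup-$i$ products, i.e.\ that the action factors through the suboperad of $\mathcal{E}_+$ generated in arity $\leq 2$. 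That suboperad is proper, so the assertion fails. A dimension count already shows this in arity $3$: $\dim\mathcal{E}(3)_2=6\cdot 5^2=150$, while the arity-$3$, degree-$2$ vectors obtained as partial composites of arity-$\leq 2$ elements, together with their $\mathfrak{S}_3$-translates, span at most $6\cdot 2\cdot 4\cdot 3=144$ dimensions. Equivalently, the interval-cut operations attached to higher-arity surjections (those responsible, for instance, for the odd-primary Steenrod operations) are genuinely new operations, not composites of cup-$i$'s. Iterating the arity-two estimate of \cite{CST16Steenrod} therefore does not control all the operations you need.

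The bound is nevertheless true, and the paper proves it directly rather than by reduction to arity $2$: restricting along $\tilde\iota_{n-k}:N^\ast(c\Delta^{j_{n-k}})\to N^\ast(\Delta^{j_{n-k}}\times\{1\})$ and using naturality, each component of $\widetilde\lambda(\omega;g_1,\dots,g_m)$ in the tensor decomposition coming from the diagonal of $\mathcal{E}_+$ has the form $\widetilde\lambda_l(\omega_l;g^s_{1,l},\dots,g^s_{m,l})$ with $\omega_l$ of non-positive cohomological degree, so its degree is at most $\sum_i\lvert g^s_{i,l}\rvert$; assembling the link factors gives $\lVert\widetilde\lambda(\omega;g_1,\dots,g_m)\rVert_k\leq\sum_i\lVert g_i\rVert_k$ and hence the bound by $\overline{p}_1\oplus\cdots\oplus\overline{p}_m$. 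This is the argument you would need to substitute for your arity-two reduction. You should also record the analogous estimate for $d\,\widetilde\lambda(\omega;g_1,\dots,g_m)$ via the Leibniz rule, since membership in $\widetilde N^\ast_{\overline{p}}$ requires allowability of the cochain \emph{and} of its differential; your write-up omits this point, which the paper checks explicitly.
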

    In the proof, we wil use the fact that the tensor product of two $\mathcal{E}_+$-algebras is also an $\mathcal{E}_+$-algebra. This was known by Berger and Fresse, it is deduced from the morphism of operads $\mathcal{E}_+\to\mathcal{E}_+\otimes \mathcal{E}_+$ they considered \cite{BF04}. 
    \begin{proof}
    We start by describing an $\mathcal{E}_+$-algebra structure on the blown-up cochain complex $\widetilde N^\ast(X, R)$. Recall that a cochain $c\in\widetilde N^\ast(X, R)$ associates to any regular simplex, $\sigma:\Delta=\Delta^{j_0}\ast\ldots\ast\Delta^{j_n}\to X$, an element $c_\sigma \in \widetilde N^\ast(\Delta)=N^\ast(c\Delta^{j_0})\otimes\ldots\otimes N^\ast(c\Delta^{j_{n-1}})\otimes N^\ast(\Delta^{j_n})$. We already know that there is an $\mathcal{E}_+$-algebra structure on $\widetilde N^\ast(\Delta)$:
    \[
    \begin{array}{rrcl}
    \widetilde\lambda:&\mathcal{E}_+(m)\otimes \widetilde N^\ast(\Delta)^{\otimes m}&\to& \widetilde N^\ast(\Delta)\\
    ~&\omega\otimes g_1\otimes\ldots\otimes g_m     &\mapsto&  \widetilde\lambda(\omega; g_1,\ldots,g_m)
    \end{array}
    \]
    We define evaluation morphisms on $\widetilde N^\ast(X, R)$
    \[\lambda:\mathcal{E}_+(m)\otimes \widetilde N^\ast(X, R)^{\otimes m}\to \widetilde N^\ast(X, R)\]
    by setting
    \[\lambda(\omega; c_1,\ldots,c_m)_\sigma:=\widetilde\lambda(\omega; c_{1,\sigma},\ldots,c_{m,\sigma})\]
    for $\omega\in \mathcal{E}_+(m)$, $c_1,\ldots, c_m\in \widetilde N^\ast(X, R)$ and for any regular simplex, $\sigma:\Delta=\Delta^{j_0}\ast\ldots\ast\Delta^{j_n}\to X$. The functoriality of the $\mathcal{E}_+$-algebra structure on a tensor product and the functoriality of the association $X\to \widetilde N^\ast(X, R)$ ensures that the $\mathcal{E}_+$-algebra structure is functorial in $X$. 
    
    We're left with checking that this structure preserves the perverse degree. Note that it is a local notion. Let $\overline{p}_1, \ldots, \overline{p}_m$ be GM-perversities. For $1\leq i \leq m$, we consider elements $c_i\in \widetilde N^\ast_{\overline{p}_i}(X, R)$. We need to check that $\norm{\lambda(\omega; c_1,\ldots,c_m)}\leq \overline{p}_1\oplus\ldots\oplus\overline{p}_m$. Recall that the perverse degree of a cochain $\omega\in N^\ast(X, R)$ along a singular stratum $S$ is given by
    \[\norm{\omega}_S:=\sup\{\norm{\omega_\sigma}_{\codim S} \,|\, \sigma: \Delta \to X \text{ regular such that } \sigma(\Delta)\cap S \neq  \emptyset \}.\]
    Hence, checking that the $\mathcal{E}_+$-algebra structure preserves the perverse degree amounts to checking that for any regular simplex $\sigma:\Delta=\Delta^{j_0}\ast\ldots\ast\Delta^{j_n}\to X$, for any family of elements $g_1,\ldots,g_m\in \widetilde N^\ast(\Delta)$ with $\norm{g_1}\leq \overline{p}_i$ we have
    \[\norm{\widetilde\lambda(\omega; g_1,\ldots,g_m)}\leq \overline{p}_1\oplus\ldots \oplus \overline{p}_m\]
    We take $k\in\{1,\ldots, n\}$ such that $\Delta^{j_{n-k}}\neq \emptyset$. The inclusion $\Delta^{j_{n-k}}\times\{1\}\hookrightarrow c\Delta^{j_{n-k}}$ induces a cochain map $\iota_{n-k}:N^\ast(c\Delta^{j_{n-k}})\to N^\ast(\Delta^{j_{n-k}}\times\{1\})$ and by extension, we get a morphism of cochain complexes
    \[\tilde\iota_{n-k}:\widetilde N^\ast(\Delta)\to N^\ast(c\Delta^{j_0})\otimes\ldots N^\ast(\Delta^{j_{n-k}}\times \{1\})\otimes\ldots \otimes N^\ast(c\Delta^{j_{n-1}}) \otimes N^\ast(\Delta^{j_n}.)\]
    For $1\leq i\leq m$, we decompose $g_i$ in $g_i=\sum_{s=0}^{n_i}g_{i,0}^s\otimes \ldots \otimes g_{i,n}^s$. Similarly, we can write $\tilde\iota_{n-k}(g_i)=\sum_{s=0}^{n_i}g_{i,0}^s\otimes \ldots\otimes \iota_{n-k}(g_{i,n-k}^s)\otimes\ldots \otimes g_{i,n}^s$. Recall that
    \[\norm{g_i}_k=\sup_s\{\Real{g_{i,n-l+1}^s\otimes\ldots\otimes g_{i,n}^s} \text{ such that }g_{i,0}^s\otimes \ldots\otimes \iota_{n-k}(g_{i,n-k}^s)\neq 0 \}.\]
    Using the naturality of the $\mathcal{E}_+$-algebra structure, we have
    \[\tilde\iota_{n-k}(\widetilde\lambda(\omega; g_1,\ldots,g_m))=\widetilde\lambda(\omega; \tilde\iota_{n-k}(g_1),\ldots,\tilde\iota_{n-k}(g_m)).\]
    If there is an element $g_i$ such that $\tilde\iota_{n-k}(g_i)=0$ then $\widetilde\lambda(\omega; \tilde\iota_{n-k}(g_1),\ldots,\tilde\iota_{n-k}(g_m))=0$ and hence, $\norm{\tilde\iota_{n-k}(\widetilde\lambda(\omega; g_1,\ldots,g_m))}=-\infty$. We now suppose that for every $1\leq i\leq m$, $\tilde\iota_{n-k}(g_i)\neq 0$. Using the definition of the $\mathcal{E}_+$-algebra structure on a tensor product (using Berger and Fresse's diagonal map mentioned above), we can see that $\widetilde\lambda(\omega; \tilde\iota_{n-k}(g_1),\ldots,\tilde\iota_{n-k}(g_m))$ is a sum of tensor products whose components are of the following form:
    \begin{itemize}
        \item $\widetilde\lambda_l(\omega_l; g_{1, l}^s,\ldots,g_{m, l}^s))$ when $l\neq n-k$
        \item $\widetilde\lambda_{n-k}(\omega_l; \tilde\iota_{n-k}(g_{1, n-k}^s),\ldots,\tilde\iota_{n-k}(g_{m, n-k}^s))$ else
    \end{itemize}
    where $\widetilde\lambda_l$ denotes the following action of the Barratt-Eccles operad
    \[
    \begin{array}{rrcl}
    \widetilde\lambda_l:&\mathcal{E}_+(m)\otimes \widetilde N^\ast(c\Delta^{j_l})^{\otimes m}&\to& \widetilde N^\ast(c\Delta^{j_l})\\
    ~&\omega\otimes g_{1,l}\otimes\ldots\otimes g_{m,l}     &\mapsto&  \widetilde\lambda_l(\omega; g_{1,l},\ldots,g_{m, l}).
    \end{array}
    \]
    Furthermore, $\Real{\widetilde\lambda_l(\omega_l; g_{1, l}^s,\ldots,g_{m, l}^s))}= \sum_{t=0}^m \Real{g_{t, l}^s}+\Real{\omega_l}\leq \sum_{i=0}^m \Real{g_{i, l}^s}$
    since $\Real{\omega_l}\leq 0$. This implies that
    \[\norm{\widetilde\lambda(\omega; g_1,\ldots,g_m)}_k\leq \sum_{i=1}^m \norm{g_i}_k\]
    and hence, we have
    \[\norm{\widetilde\lambda(\omega; g_1,\ldots,g_m)}\leq \sum_{i=1}^m \norm{g_i}\leq \overline{p}_1\oplus\ldots\oplus\overline{p}_m.\]
    Furthermore, using the Leibniz rule on the differential, we notice that
    \[\norm{d(\widetilde\lambda(\omega; g_1,\ldots,g_m))}\leq \max\left(\sum_{i=1}^m \norm{g_i}, \sum_{\substack{i=1 \\ i\neq j}}^m \norm{g_i}+\norm{d(g_j)} \text{ for } 1\leq j\leq m\right)\leq \overline{p}_1\oplus\ldots\oplus\overline{p}_m\]
    if $\norm{d(g_i))}\leq \overline{p}_i$. This proves that the Barratt-Eccles action on the blown-cochain complex preserves the perverse degree, hence, $\widetilde N^\ast_\bullet(X, R)$ is a perverse $\mathcal{E}_+$-algebra.
\end{proof}
    
        \subsection{Proof of Theorem \ref*{thm:BV-Hoch-bup}}\label{subsec:thm_BV_Hoch_bup}
        In this subsection, we explain why Theorem \ref{thm:BV-Hoch-bup} is true. The procedure in similar to the one given in subsection \ref{subsec:Thm_BV_sing_proof}. We recall the statement of Proposition \ref{prop:algBE_envalg} which was given in the first section and prove that we have a quasi-isomorphism when $\mathcal{C}=Ch(\mathbb{F})^{\pGM}$.
        \begin{proposition*}[\textbf{\ref*{prop:algBE_envalg}}]
        Let $\mathcal{C}$ be a symmetric monoidal category over $Ch(\mathbb{F})$. If an object $A$ in $\mathcal{C}$ is an $\mathcal{E}_+$-algebra then
        \begin{enumerate}[label=\roman*)]         
            \item we have a morphism $\env(\overline{A})\twoheadrightarrow A$ of $\env(\overline{A})$-modules,
            \item there is a morphism $A\otimes A^{op}\to \env(\overline{A})$,
            \item and there exists an isomorphism $\env(\overline{A}) \simeq \env(\overline{A})^{op}$
        \end{enumerate}
        where the last two morphisms are morphism of $As$-algebras in $\mathcal{C}$. Furthermore, if $\mathcal{C}=Ch(\mathbb{F})$ or $\mathcal{C}=Ch(\mathbb{F})^{\pGM}$, the morphism of i) is a quasi-isomorphism. 
        \end{proposition*}
        \begin{proof}
        Recall that the map of i) is given by
        \[\begin{array}{lccc}
        f_\bullet:&\env(\overline{A})_\bullet & \to & A_\bullet \\
         ~&\sigma(t, a_1, \ldots, a_k)& \mapsto & \sigma(1_A, a_1, \ldots, a_k)
        \end{array}\]
        where $\sigma(1_A, a_1, \ldots, a_k) \in A_{\overline{p}_1\oplus\ldots\oplus \overline{p}_k}$ if $a_i\in A_{\overline{p}_i}$ for $1\leq i \leq k$.
        If $\sigma\in \mathcal{E}(k+1)$, we have $f(\sigma(t, a_1, \ldots, a_k))=F_k(\sigma)(a_1, \ldots, a_k)$ with $F_k$ being the composite 
        \[\mathcal{E}(k+1)\hookrightarrow \mathcal{E}_+(k+1)\otimes \mathcal{E}_+(0)\otimes \mathcal{E}_+(1)\otimes \ldots \otimes \mathcal{E}_+(1) \xrightarrow{\gamma} \mathcal{E}_+(k)=\mathcal{E}(k)\]
        where $\gamma$ refers to the composition product in $\mathcal{E}_+$. The morphism $f$ admits a right inverse 
        \[\begin{array}{lccc}
        g_\bullet:& A_\bullet & \to &\mathcal{U}_{\mathcal{E}}(\overline{A})_\bullet \\
         ~&a& \mapsto & (1,2)(t,a). 
        \end{array}\]
        We're going to show that $g \circ f$ and $\id_{\env(\overline{A})}$ are chain homotopic when $\mathcal{C}=Ch(\mathbb{F})^{\pGM}$. We only prove the result for free $\mathcal{E}_+$-algebras. The general result is deduced from the fact that any $\mathcal{E}_+$-algebra can be expressed as the coequalizer of free algebras. We suppose that $A_\bullet=\mathcal{E}_+(V_\bullet)$ for some perverse chain complex $V_\bullet$. We have $\mathcal{E}_+(V_\bullet):=\oplus_{k\geq 0}(\mathcal{E}_+(k)\otimes V_\bullet^{\otimes k})_{\mathfrak{S}_k}=\mathcal{E}(0)\oplus\mathcal{E}(V_\bullet)$. Let $a=\sigma(v_1, \ldots, v_k)\in A$, we have $g(\sigma(v_1, \ldots, v_k))=G_k(\sigma)(t, v_1, \ldots, v_k)$ where 
        \[
        \begin{array}{lccc}
             G_k:& \mathcal{E}(k) &\to& \mathcal{E}(k+1)  \\
             ~&  \sigma &\mapsto& (1, 2)\circ_2 \sigma.
        \end{array}
        \]
        We have $F_k\circ G_k=\id_{\mathcal{E}(k)}$ and there exists a chain homotopy $H:\mathcal{E}(k+1)\to \mathcal{E}(k+1)$ (of degree +1) such that
        \[d_{\mathcal{E}(k+1)}\circ H + H\circ d_{\mathcal{E}(k+1)} = G_k\circ F_k - \id_{\mathcal{E}(k+1)}.\]
         For every $\overline{p}\in\pGM$, we can lift $H$ into a map $h_{\overline{p}}:\env(\overline{A})_{\overline{p}}\to \env(\overline{A})_{\overline{p}}$ where $h_{\overline{p}}(\sigma(t, a_1, \ldots, a_k)):=H(\sigma)(t, a_1, \ldots, a_k)$ for $\sigma(t, a_1, \ldots, a_k)\in\env(\overline{A})_{\overline{p}}$. Note that by construction, this family of maps indexed by $\pGM$ commutes with the structure morphisms. 
         Using the equality given above we can show that $d_{\env(\overline{A})}\circ h_{\overline{p}} + h_{\overline{p}}\circ d_{\env(\overline{A})} = (g\circ f)_{\overline{p}} - \id_{{\env(\overline{A})}}$. This proves that $f_\bullet$ and $g_\bullet$ are chain homotopy equivalences, whence they are quasi-isomorphisms of perverse chain complexes.
        \end{proof}
        As a corollary, we get the next result which is analogous to corollary \ref{coro:algBE_cochain_envalg}.
        \begin{corollaire}
        Let $\mathbb{F}$ be a field and $n\in\mathbb{N}$. There exists a functor $E: (\nstrat)^{op} \to pDGA$ from stratified spaces of formal dimension $n$ to the category of pDGAs such that for any stratified space $X$ the following properties are verified:
        \begin{enumerate}[label=\roman*)]
            \item we have a quasi-isomorphism of left $E(X)$-modules $E(X)\xtwoheadrightarrow{\simeq} \widetilde N^\ast_\bullet(X; \mathbb{F})$,            
            \item there is a pDGA morphism $\widetilde N^\ast_\bullet(X; \mathbb{F})\otimes \widetilde N^\ast_\bullet(X; \mathbb{F})^{op}\to E(X)$,
            \item and there exists an isomorphism of pDGAs $E(X) \simeq E(X)^{op}$.
        \end{enumerate}
        \end{corollaire}
        \begin{proof}
            For $X$ a stratified space, we set $E(X)=\env(\overline{\widetilde N^\ast_\bullet(X; \mathbb{F})})$. The functorialy of $E$ is deduced from the functorialy of the blown-up intersection cochain complex and the enveloping algebra. The rest of the result follows from the previous proposition.
        \end{proof}
        
        Theorem \ref{thm:BV-Hoch-bup} follows from Poincaré duality (Proposition \ref{prop:dual-qiso-bup}), the structure of an $\mathcal{E}_+$-algebra on the blown-up intersection cochains of a stratified space (Proposition \ref{prop:BE_alg_bup_cochains}) and the next result.
        \begin{proposition}\label{prop:BV_HH_pAlg_BE}
            Let $A_\bullet$ be an algebra over the Barratt-Eccles operad $\mathcal{E}_+$ in $Ch(\mathbb{F})^{\pGM}$. If there exists a quasi-isomorphism of $A$-modules between $A$ and its linear dual $DA$ then one can endow the Hochschild cohomology $HH^\ast_\bullet(A)$ with a perverse Batalin-Vilkovisky algebra structure. 
        \end{proposition}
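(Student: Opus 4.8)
The plan is to transpose the argument of Proposition~\ref{prop:BV_HH_Alg_BE} to the category of perverse chain complexes, which is possible because every tool used there is now available in $Ch(\mathbb{F})^{\pGM}$: Proposition~\ref{prop:algBE_envalg} holds for $\mathcal{C}=Ch(\mathbb{F})^{\pGM}$ (as just shown), Hovey's model structure of Theorem~\ref{thm:modelcat_pchain} supplies cofibrant approximations, and the two-sided bar construction $\mathbb{B}(A)_\bullet$ is a cofibrant approximation of $A_\bullet$ by \cite[Section~2.4]{Raz23}. One first has to check that Menichi's criteria (Propositions~\ref{prop:11_Menichi} and~\ref{prop:12_Menichi}) admit perverse analogues; since the Hochschild (co)chain complexes, the dual of Connes' boundary map $B^\vee$, and the evaluation map $\eval$ all carry a compatible perverse grading (everything being built perversity by perversity and compatibly with the structure morphisms $\phi_{\overline{p}\leq\overline{q}}$), these statements carry over, and I would record their perverse versions before starting.

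First I would show that $A_\bullet$ is a perverse derived Poincaré duality algebra. Writing $\Gamma := \Dual_A(1_A)$ for the image of the unit under the given quasi-isomorphism $\Dual_A : A_\bullet \to DA_\bullet$, I would build a zig-zag of left $\env(\overline{A})_\bullet$-modules
\[
A_\bullet \xtwoheadleftarrow[f]{\simeq} \env(\overline{A})_\bullet \to DA_\bullet,
\]
where $f$ is the quasi-isomorphism of i) in Proposition~\ref{prop:algBE_envalg} and the right-hand map sends $1_{\env(\overline{A})}$ to $\Gamma$. The left $\env(\overline{A})_\bullet$-module structure on $DA_\bullet$ comes from iii), exactly as in the non-perverse case, and by ii) both maps are morphisms of $A_\bullet$-bimodules. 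Factoring a cofibrant approximation $P_\bullet \twoheadrightarrow A_\bullet$ through $\env(\overline{A})_\bullet$ produces a cocycle $c : P_\bullet \to DA_\bullet$ with $\eval([c]) = [\Gamma]$; the perverse version of Proposition~\ref{prop:11_Menichi} then yields that $c$ is a quasi-isomorphism, i.e.\ $A_\bullet$ is a perverse DPDA.

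For the Batalin-Vilkovisky structure I would invoke the perverse analogue of Proposition~\ref{prop:12_Menichi}, which reduces everything to checking that $B^\vee(c)=0$. To make this computable I would replace the abstract resolution $P_\bullet$ by the bar construction $\mathbb{B}(A)_\bullet$, push $f$, $g$ and $c$ through it as in the non-perverse proof, and obtain the explicit formula $c(a[\,]b)(\alpha)=\Gamma(ab\alpha)$, vanishing on bar elements of positive length. Using the perverse duality isomorphism
\[
HC^\ast_\bullet(A,DA)\simeq D\bigl(HC_\ast^\bullet(A)\bigr),
\]
the cochain $c$ corresponds to $\overline{\phi}(a[x])=x\cdot\Gamma(a)$ for $x\in\mathbb{F}$ and $0$ otherwise, on which the definition of $B^\vee$ gives zero. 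This verifies the hypothesis $\Delta(1)=0$ of the perverse Proposition~\ref{prop:12_Menichi}, so that the perverse Gerstenhaber algebra $HH^\ast_\bullet(A)$ (cf.\ \cite[Theorem~A]{Raz23}) equipped with $\Delta=\phi^{-1}\circ B^\vee\circ\phi$ becomes a perverse BV-algebra.

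The main obstacle is not any single calculation — the Connes-boundary computation is formally identical to the manifold case once the explicit form of $c$ is in hand — but rather the bookkeeping needed to guarantee that every construction respects the perverse filtration and commutes with the structure morphisms $\phi_{\overline{p}\leq\overline{q}}$. In particular, one must verify that the module structures transported via Proposition~\ref{prop:algBE_envalg}, the factorisation through $\env(\overline{A})_\bullet$, and the duality isomorphism are all morphisms of perverse (co)chain complexes, so that the resulting $\Delta$ genuinely refines to the family $\{HH^\ast_{\overline{p}}(A)\}_{\overline{p}\in\pGM}$ and defines a perverse BV-algebra compatible with the perverse Gerstenhaber structure.
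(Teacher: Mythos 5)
Your proposal is correct and follows essentially the same route as the paper: the paper explicitly omits this proof as being identical to that of Proposition \ref{prop:BV_HH_Alg_BE}, relying on the perverse case of Proposition \ref{prop:algBE_envalg}, Hovey's model structure, the bar construction as cofibrant approximation, and the perverse analogues of Menichi's two criteria (which the paper records, citing \cite[Section 3]{Raz23}). Your write-up simply spells out the transposition, including the zig-zag through $\env(\overline{A})_\bullet$, the explicit cocycle $c$ on $\mathbb{B}(A)_\bullet$, and the verification $B^\vee(c)=0$, together with the appropriate attention to compatibility with the structure morphisms $\phi_{\overline{p}\leq\overline{q}}$.
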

        We omit the proof of this result as it is identical to the one of Proposition \ref{prop:BV_HH_Alg_BE}. We'll need just the following preliminary results which are analogous to statements given in subsection \ref{subsec:Thm_BV_sing_proof}. The demonstrations can be found in \cite[Section 3]{Raz23}.

        \begin{defi}
            We say that a pDGA $A_\bullet$ is a \emph{perverse derived Poincaré duality algebra} (pDPDA) if it isomorphic to its dual $DA[k]_\bullet$ in the derived category of pDG $A_\bullet$-bimodules up to a shift in degree by $k\in\mathbb{Z}$.
        \end{defi}
        
        \begin{rem}\label{rem:pDPDA}
            In other words, if $A_\bullet$ is pDPDA then there exist, in the category of a $A_\bullet$-bimodules, a cofibrant approximation $q_A: P_\bullet\xrightarrow{\simeq} A_\bullet$ and a quasi-isomorphism of $c:P_\bullet\xrightarrow{\simeq} DA[k]_\bullet$ that fit in the following zigzag:
            \[A_\bullet \xleftarrow[\simeq]{q_A} P_\bullet \xrightarrow[\simeq]{c} DA[k]_\bullet.\]
            In particular, we can take the bar construction $P_\bullet=\mathbb{B}(A)_\bullet$ (when it is cofibrant as a pDG $A_\bullet$-bimodule). We have the following commutative diagram where the vertical maps are isomorphisms
            \[\adjustbox{scale=0.8}{\begin{tikzcd}[column sep = huge]
        	{\Hom_{A^e}(\mathbb{B}(A),A)_\bullet} & {\Hom_{A^e}(\mathbb{B}(A),\mathbb{B}(A))_\bullet} & {\Hom_{A^e}(\mathbb{B}(A),DA[k])_\bullet} \\
        	{\Hom_{Ch(R)^{\pGM}}(T(s\overline{A}),A)_\bullet} & {\Hom_{Ch(R)^{\pGM}}(T(s\overline{A}),\mathbb{B}(A))_\bullet} & {\Hom_{Ch(R)^{\pGM}}(T(s\overline{A}),DA[k])_\bullet}
        	\arrow["{\Hom_{A^e}(\mathbb{B}(A),q_A)}"', from=1-2, to=1-1]
        	\arrow["{\Hom_{A^e}(\mathbb{B}(A),c)}", from=1-2, to=1-3]
        	\arrow["{\Hom_{Ch(R)^{\pGM}}(T(s\overline{A}),c)}"', from=2-2, to=2-3]
        	\arrow["{\Hom_{Ch(R)^{\pGM}}(T(s\overline{A}),q_A)}.", from=2-2, to=2-1]
        	\arrow["\simeq"{marking}, draw=none, from=2-1, to=1-1]
        	\arrow["\simeq"{marking}, draw=none, from=2-3, to=1-3]
        	\arrow["\simeq"{marking}, draw=none, from=2-2, to=1-2]
            \end{tikzcd}}\]
            Since $Ch(R)^{\pGM}$ is a monoidal model category, by Remark \ref{rem:quillen_adj_pch}, the bottom arrows are all quasi-isomorphisms. Hence, when we take homology, we get an isomorphism:
            \[HH^\ast_\bullet(A) \xleftarrow[\simeq]{HH^\ast(A, q_A)} HH^\ast_\bullet(A, P) \xrightarrow[\simeq]{HH^\ast(A,c)} HH^\ast_\bullet(A, DA)[k].\]
            Notice that $[q_A]$, which is the unit of the left $HH^\ast_\bullet(A)$-module $HH^\ast_\bullet(A)$, is sent to $[Id_P]$ and finally to $[c]$. Hence, this isomorphism corresponds to the action of $HH^\ast_\bullet(A)$ on $[c]$ (given at the end of subsection \ref{subsub:Hoch_def_prop}). 
            %If $[c]\in HH^{j}_{\overline{q}}(A, DA)$, it is an isomorphism of $HH^\ast_\bullet(A)$-modules of degree $j$ and of perverse degree $\overline{q}$.
        \end{rem}
        \begin{rem}
            In what follows, we will omit the shift $k$ to lighten the notation. 
        \end{rem}

        Let $ev(1_A):\Ext_A(A, DA)_\bullet\to H(DA)_\bullet$ be the evaluation at the unit $1_A$ of $A_\bullet$ and let $i_A:A_\bullet\hookrightarrow (A\boxtimes A^{op})_\bullet$ be the inclusion in the first factor. We denote by $\eval$ the composite
            \[eval:HH^\ast_\bullet(A,DA):=\Ext^\ast_{A^e}(A,DA)_\bullet\xrightarrow{\Ext_{i_A}(A, DA)} \Ext_A^\ast(A, DA)_\bullet\simeq H(DA)_\bullet.\]
       
        \begin{proposition}
        Let $A$ be a pDGA and let $c\in HC^\ast_\bullet(A,DA)$ be a cocycle such that
        the morphism of $H(A)_\bullet$-modules
        \begin{align*}
            H(A)_\bullet &\xrightarrow{\simeq} H(DA)_\bullet \\
            a &\mapsto a.\eval([c])    
        \end{align*}
        is an isomorphism.
        Then $c$ is a quasi-isomorphism and hence, $A_\bullet$ is a perverse derived Poincaré dualité algebra.
        \end{proposition}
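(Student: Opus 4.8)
The plan is to transcribe the proof of the non-perverse analogue (Proposition~\ref{prop:11_Menichi}) into the perverse setting, exploiting the fact that in $Ch(\mathbb{F})^{\pGM}$ weak equivalences are detected perversity by perversity (Theorem~\ref{thm:modelcat_pchain}). The whole point is that being a quasi-isomorphism is a property of the underlying perverse chain map, so it does not matter whether we remember the full $A_\bullet$-bimodule structure, only a one-sided module structure, or nothing at all.

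First I would fix a representative of $[c]$. Working over a field, $\mathbb{B}(A)_\bullet$ is a cofibrant resolution of $A_\bullet$ as a perverse bimodule, so $[c]\in HH^\ast_\bullet(A,DA)=\Ext^\ast_{A^e}(A,DA)_\bullet$ is represented by an honest morphism of perverse $A_\bullet$-bimodules $c\colon \mathbb{B}(A)_\bullet \to DA_\bullet$. Proving that ``$c$ is a quasi-isomorphism'' then means exactly that this map induces an isomorphism $H(\mathbb{B}(A))_{\overline p}\to H(DA)_{\overline p}$ for every perversity $\overline p$.

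Next I would unwind $\eval$. The functor $\Ext_{i_A}(A,DA)$ is restriction of scalars along $i_A$, which forgets the right action and keeps $\mathbb{B}(A)_\bullet$ as a resolution of $A_\bullet$ in left perverse $A_\bullet$-modules. Since $A_\bullet$ is free on one generator as a left module over itself, $\Ext^\ast_A(A,DA)_\bullet\simeq H(DA)_\bullet$, the isomorphism being the evaluation $ev(1_A)$ at the unit $1_A\in A_{\overline 0}$. Thus $\eval([c])\in H(DA)_\bullet$ is the homology class of the value of $c$ at the unit. The key identification is that the hypothesised map $a\mapsto a.\eval([c])$ is precisely the homology of $c$ regarded as a morphism of left perverse $A_\bullet$-modules: a left-module map out of the free module $A_\bullet$ is determined by where it sends the unit, and on homology it sends $a$ to $a.\eval([c])$.

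Finally, the hypothesis says this homology map is an isomorphism, i.e.\ $c$ is a quasi-isomorphism of left perverse $A_\bullet$-modules. Since weak equivalences in $Ch(\mathbb{F})^{\pGM}$ ignore the module structure and are checked in each perversity, $c\colon \mathbb{B}(A)_\bullet \to DA_\bullet$ is a quasi-isomorphism of perverse chain complexes, hence of bimodules, and we obtain the zigzag
\[
A_\bullet \xleftarrow[\simeq]{q_A} \mathbb{B}(A)_\bullet \xrightarrow[\simeq]{c} DA_\bullet
\]
of perverse $A_\bullet$-bimodules. By Remark~\ref{rem:pDPDA} this is exactly the assertion that $A_\bullet$ is a perverse derived Poincaré duality algebra. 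The step I expect to require the most care is the identification $\Ext^\ast_A(A,DA)_\bullet\simeq H(DA)_\bullet$: one must check in Hovey's perverse model structure that $A_\bullet$ really is cofibrant as a left module over itself (equivalently, that the restriction of $\mathbb{B}(A)_\bullet$ along $i_A$ computes $\Ext_A$) and keep careful track of the perversity $\overline 0$ in which the unit lives. Everything else is formal.
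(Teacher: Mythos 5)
Your proof is correct and follows the route the paper intends: the paper itself omits the argument for this proposition (deferring to Section 3 of the author's earlier paper [Raz23]), but what you write is exactly the perverse transcription of Menichi's Proposition 11 that is being invoked --- represent $[c]$ by an honest bimodule map $\mathbb{B}(A)_\bullet\to DA_\bullet$, restrict along $i_A$ to left modules, identify the induced map on homology with $a\mapsto a.\eval([c])$ by evaluation at the unit, and conclude perversity by perversity using Hovey's model structure. The only phrase worth tightening is ``the homology of $c$ regarded as a morphism of left modules'': $c$ is a map out of $\mathbb{B}(A)_\bullet$ rather than out of $A_\bullet$, so the determined-by-the-unit argument really applies to the $H(A)_\bullet$-linear composite $H(c)\circ H(q_A)^{-1}$, which is what you in fact use.
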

  
        \begin{proposition}
        Let $A_\bullet$ be a perverse derived Poincaré duality algebra and let $\phi:HH^\ast_\bullet(A)\to HH^\ast_\bullet(A,DA)$ be the associated isomorphism of $HH^\ast_\bullet(A)$-bimodules. We define a map $\Delta: HH^\ast_\bullet(A) \to HH^{\ast-1}_\bullet(A)$ by setting 
        \[\Delta=\phi^{-1}\circ B^\vee \circ \phi.\]
        If $\Delta(1)=0$, then the perverse Gerstenhaber algebra $HH^\ast_\bullet(A)$ equipped with $\Delta$ becomes a perverse BV-algebra.
        \end{proposition}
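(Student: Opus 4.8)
The plan is to adapt Menichi's proof of \cite[Proposition 12]{Men09} by promoting the noncommutative \emph{calculus} relating Hochschild cohomology and homology to the category $Ch(\mathbb{F})^{\pGM}$, and then transporting the dual Connes boundary $B^\vee$ through the Poincaré duality isomorphism $\phi$. Since $HH^\ast_\bullet(A)$ is already a perverse Gerstenhaber algebra (\cite[Theorem A]{Raz23}), it suffices to check two things about $\Delta=\phi^{-1}\circ B^\vee\circ\phi$: that $\Delta^2=0$, and that $\Delta$ fails to be a derivation of the cup product by exactly the Gerstenhaber bracket, i.e. the \emph{BV-identity}
\[[f,g]=\Delta(f\cup g)-\Delta(f)\cup g-(-1)^{\Real{f}}f\cup\Delta(g)\]
(up to the sign conventions of \cite[Theorem A]{Raz23}). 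These two identities, together with the perverse Gerstenhaber structure, are precisely the axioms of a perverse BV-algebra.

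Next I would set up the perverse calculus. Recall from the end of subsection \ref{subsub:Hoch_def_prop} that $HH^\ast_\bullet(A,DA)$ is a module over $HH^\ast_\bullet(A)$; write $\iota_f(\xi):=f\cdot\xi$ for this action, so that $\iota_f\circ\iota_g=\iota_{f\cup g}$. On $HH^\ast_\bullet(A,DA)$ we have the degree $-1$ operator $B^\vee$ with $(B^\vee)^2=0$. As in the non-perverse case one introduces a \emph{Lie derivative} $L_f$ and establishes the two structural relations
\[L_f=B^\vee\circ\iota_f-(-1)^{\Real{f}}\iota_f\circ B^\vee\]
(the Cartan homotopy formula) and $\iota_{[f,g]}=L_f\circ\iota_g-(-1)^{(\Real{f}-1)\Real{g}}\iota_g\circ L_f$. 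Each is witnessed at the level of the perverse bar complex $\mathbb{B}(A)_\bullet$ by the same explicit homotopies as in the classical theory; the only additional point is that every operator involved must be checked to be a morphism of perverse chain complexes, that is, to be compatible with the structure maps $\phi_{\overline{p}\leq\overline{q}}$ and to add perversities as $\cup$ and $\boxtimes_A$ do.

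With this in hand the conclusion is a direct computation. By Remark \ref{rem:pDPDA} the isomorphism $\phi$ is the module map $f\mapsto\iota_f([c])$ with $[c]:=\phi(1)$ the dual fundamental class, so $\Delta^2=\phi^{-1}(B^\vee)^2\phi=0$ immediately, and $\Delta(1)=0$ is equivalent to $B^\vee([c])=0$. Feeding $B^\vee([c])=0$ into the Cartan formula yields $\phi(\Delta f)=L_f([c])$. Then, using $\iota_{f\cup g}=\iota_f\iota_g$ and the Cartan formula twice,
\[\phi(\Delta(f\cup g))=B^\vee\iota_f\iota_g([c])=L_f\iota_g([c])+(-1)^{\Real{f}}\phi(f\cup\Delta g),\]
so that $L_f\iota_g([c])=\phi\big(\Delta(f\cup g)-(-1)^{\Real{f}}f\cup\Delta g\big)$. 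Substituting this together with $\iota_g L_f([c])=\iota_{g\cup\Delta f}([c])=(-1)^{\Real{g}(\Real{f}-1)}\phi(\Delta f\cup g)$ into $\phi([f,g])=\iota_{[f,g]}([c])$, the two copies of the sign $(-1)^{(\Real{f}-1)\Real{g}}$ cancel, and applying $\phi^{-1}$ recovers exactly the BV-identity above. A bookkeeping check shows that when $f\in HH^\ast_{\overline{p}}(A)$ and $g\in HH^\ast_{\overline{q}}(A)$ both sides lie in $HH^{\Real{f}+\Real{g}-1}_{\overline{p}\oplus\overline{q}}(A)$, consistent with the perversity grading of the bracket; hence $(HH^\ast_\bullet(A),\cup,[-,-],\Delta)$ is a perverse BV-algebra.

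\textbf{The main obstacle} is the second step: establishing the Cartan homotopy formula and the relation $\iota_{[f,g]}=[L_f,\iota_g]$ in $Ch(\mathbb{F})^{\pGM}$. In the classical setting these rest on delicate chain homotopies on the (cyclic) bar construction, and here one must verify that every homotopy operator and every intermediate term respects the poset of perversities, i.e. is a genuine morphism of perverse chain complexes that adds perversities in the same way as the cup and cap products. Because all the relevant operations ($B^\vee$, $\cup$, the module action and the bracket) are defined by the same combinatorial formulas on $\mathbb{B}(A)_\bullet$ with perversity degrees merely recorded additively, the classical homotopies should lift verbatim; the genuine work is the perversity estimates and the naturality of these homotopies with respect to $\phi_{\overline{p}\leq\overline{q}}$.
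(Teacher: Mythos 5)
Your proposal is correct and is essentially the intended argument: the paper does not prove this proposition internally but defers to \cite[Section 3]{Raz23}, and that proof --- like Menichi's original \cite[Proposition 12]{Men09} on which it is modelled --- consists of exactly your computation, transporting $B^\vee$ through $\phi(f)=\iota_f([c])$ and invoking the Cartan homotopy formula together with $\iota_{[f,g]}=L_f\circ\iota_g-(-1)^{(\Real{f}-1)\Real{g}}\iota_g\circ L_f$ at the level of the perverse bar complex. Your sign bookkeeping in deriving the BV identity is right, and you correctly isolate the only genuine work as checking that the calculus operators are morphisms of perverse chain complexes; the one refinement to your ``lifts verbatim'' claim is that the perverse linear dual reverses perversities ($DZ_k^{\overline{r}}=\Hom_R(Z_{-k}^{\overline{t}-\overline{r}},R)$), so the compatibility of $B^\vee$ and the module action with the structure maps is not purely additive bookkeeping --- but all operators involved have perverse degree $\overline{0}$, and the verification goes through as you anticipate.
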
        

\appendix
\section{Intersection homology theories}\label{app:inter_hom}
    In this section, we're going to present some intersection homology theories, namely the original theory introduced by Goresky and MacPherson \cite{GM80} and the \emph{blown-up intersection cohomology} studied by Chataur, Saralegui and Tanré. We recall how they are defined and give their main properties. In the last section, they will be studied as \emph{perverse chain complexes} (i.e. functors from a poset of perversities into the category of chain complexes) and some of their properties are given in this formalism. 

    \subsection{Goresky MacPherson intersection homology}\label{subsect: interhom}
    Let's explain what kind of spaces with singularities we will be studying throughout this paper. We will follow \cite[Section 1]{CST18BUP-Alpine}, another good reference for these notions is \cite[Chapter 2]{Fri20}.
    \begin{defi}\label{def:filtered_space}
    A \emph{filtered space} is a Hausdorff space $X$ equipped with a filtration by closed subspaces 
    \[\emptyset = X_{-1} \subseteq X_0 \subseteq \ldots \subseteq X_n=X,\]
    such that $X_n\setminus X_{n-1}$ is non-empty. The \emph{formal dimension} of $X$ is $\dim(X)=n$. 
    
    The \emph{strata} of $X$ are the non-empty connected components of $X_i\setminus X_{i-1}$. 
    The \emph{formal dimension} of a stratum $S\subset X_i\setminus X_{i-1}$ is $\dim(S)=i$. Its \emph{formal codimension} is $\codim(S)=\dim(X)-\dim(S)$.
    The \emph{regular strata} are the strata of dimension $n$ and the other strata are \emph{singular}. The set of strata of $X$ is denoted $\mathcal{S}_X$. 
    \end{defi}        
    
    Continuous maps between filtered spaces will not necessarily induce a morphism on intersection homology, this is why we consider filtered spaces which verify an additional property.

    \begin{defi}
        A filtered space  $X$ is \emph{stratified} if it verifies the \emph{Frontier condition}:
        \begin{center}
        \emph{For any pair $S,T \in \mathcal{S}_X$, if $S\cap \overline{T}\neq \emptyset$ then $S\subset T$.}
        \end{center}
        A continuous map $f:X\to Y$ between stratified spaces is \emph{stratified} if for every $S\in \mathcal{S}_X$ there exists a stratum $T\in \mathcal{S}_Y$ such that
        \[f(S)\subset T \text{ and } \codim(S)\geq \codim(T).\]
        A statified map is a \emph{stratified homeomorphism} if it is an homeomorphism and its inverse map is also stratified. \\        
        We denote by $\strat$ the category whose objects are stratified spaces and whose morphisms are stratified maps.
    \end{defi}
    \begin{defi}
        Let $f,g:X\to Y$ be stratified maps between stratified spaces. They are \emph{stratified homotopic} if there exists a stratified map $H:[0,1]\times X \to Y$ such that $f=H|_{\{0\}\times X}$ and $g=H|_{\{1\}\times X}$. The map $H$ is called a \emph{stratified homotopy}.

        Two stratified spaces $X$ and $Y$ are \emph{stratified homotopy equivalent} if there exist stratified maps $f:X\to Y$ and $g:Y\to X$ such that
        \begin{itemize}
            \item $f\circ g$ and $g\circ f$ are stratified homotopic to $\id_Y$ and $\id_X$ respectively,
            \item for each stratum $S\in \mathcal{S}_X$, we have $\codim(f(S))=\codim(S)$,
            \item and for each stratum $T\in \mathcal{S}_Y$, we have $\codim(g(T))=\codim(T)$.
        \end{itemize}
        We say that the maps $f$ and $g$ are \emph{stratified homotopy equivalences}.
    \end{defi}
    Filtered spaces are enough to define intersection homology but one needs additional structure in order to get Poincaré duality. 
    \begin{defi}
        A filtered space $X$ of dimension $n$, with no codimension one strata, is an \emph{$n$-dimensional pseudomanifold} if for any $i\in\{0,1,\ldots, n\}$, $X_i\setminus X_{i-1}$ is an $i$-dimensional manifold. Furthermore, for any $i\in\{0,1,\ldots, n-1\}$ and for every $x\in X_i\setminus X_{i-1}$, there exist
        \begin{itemize}
            \item an open neighborhood $V$ of $x$ in $X$ endowed with the induced filtration,
            \item an open neighborhood $U$ of $x$ in $X_i\setminus X_{i-1}$,
            \item a \emph{link} i.e. a compact pseudomanifold $L$ of dimension $n-i-1$, whose open cone $\mathring{c}L$ is endowed with the filtration $(\mathring{c}L)_i=\mathring{c}L_{i-1},$
            \item a homeomorphism, $\phi: U \times \mathring{c}L \to V$ such that
                \begin{itemize}
                    \item $\phi(u,v)=u$ for any $u\in U$ and $v$ the apex of $\mathring{c}L$,
                    \item $\phi(U\times \mathring{c}L_j)=V\cap X_{i+j+1}$, for all $j\in\{0, \ldots, n-i-1\}.$
                \end{itemize}
        \end{itemize}
    \end{defi}    
    A pseudomanifold of dimension 0 is a disjoint union of points. Since the link's formal dimension is lower than the formal dimension of $X$, the notion of pseudomanifold is well defined. Such a recursive definition makes it possible to prove some results by induction. 
    \begin{rem}
        A pseudomanifold (more generally, a \emph{CS set}) is a stratified space, see \cite[Theorem G]{CST18ration}.
    \end{rem}
    One can endow every filtered space with a poset of \emph{perversities}. The \emph{intersection chain complex} consists of chains that verify some conditions with respect to these perversities. We now give Goresky and MacPherson's original definition of a \emph{perversity}. 
    \begin{defi}
    Let $n\in \mathbb{N}$, a \emph{Goresky-MacPherson $n$-perversity} (GM-perversity) is a map 
    \[\overline{p}:\{0, 1, 2\ldots, n\}\to \mathbb{N}\] 
    such that $\overline{p}(0)=\overline{p}(1)=\overline{p}(2)=0$ and 
    \[\overline{p}(i)\leq \overline{p}(i+1)\leq \overline{p}(i)+1\] 
    for any $i\in \{1,\ldots, n-1\}$.
    \end{defi}
    \begin{rem}\label{rem:genperv}
        Let $X$ be a filtered space of formal dimension $n$. Note that a Goresky-MacPherson perversity induces a map
        \[\overline{p}:\mathcal{S}_X\to \mathbb{N}\]
        by setting for a singular stratum $S$, $\overline{p}(S)=\overline{p}(\codim(S))$ (and 0 on the regular strata). A \emph{perversity on $X$} will refer to a GM $n$-perversity if  $X$ is a filtered space of formal dimension $n$.\\
        In MacPherson's work \cite{MacP91}, a perversity is defined as a function
        \[\overline{p}:\mathcal{S}_X\to \mathbb{Z}\]
        that takes the value $0$ on the regular strata. This general notion of perversity has been used in papers dealing with non-GM intersection homology. (\cite{Fri20}, \cite{CST18BUP-Alpine} for example).
    \end{rem}
    
    \textbf{Poset of GM-perversities:} For $n\in \mathbb{N}$, we consider the set of GM $n$-perversities, $\pGM$. It is a poset. The partial order is given by $\leq$: 
    \[\overline{p}\leq \overline{q} \Longleftrightarrow \overline{p}(i)\leq \overline{q}(i), ~\forall i\in \{0,\ldots, n\}.\]
    It can be seen as a category where there is at most one morphism between any pair of perversities.\\
    Notice that it has a minimal element the \emph{zero perversity} $\overline{0}$ and a maximal element, the \emph{top perversity} $\overline{t}$, given by 
    \[\overline{t}(0)=\overline{t}(1)=0 \text{ and } \overline{t}(i)=i-2 \text{ for } i\in \{2,\ldots, n\}.\]
    
    \textbf{Partial symmetric monoidal structure:} In general, the point-wise sum of two GM-perversities is not a GM-perversity but one can define a partial symmetric monoidal structure on $\pGM$. For $\overline{p}, \overline{q}\in \pGM$ such that $\overline{p}+\overline{q}\leq \overline{t}$, we denote by $\overline{p}\oplus\overline{q}$ the smallest GM-perversity which is greater or equal to $\overline{p}+\overline{q}$. Dually, when $\overline{p}\leq \overline{q}$, let $\overline{q}\ominus \overline{p}$ be the biggest GM-perversity which is lower or equal to $\overline{q}-\overline{p}$. \\
    The \emph{dual perversity} (or \emph{complementary perversity}) of $\overline{p}$ is $D\overline{p}:=\overline{t}\ominus\overline{p}$. Note that it is exactly $\overline{t}-\overline{p}$. 
    \\ When $\oplus$ and $\ominus$ are defined, they satisfy the properties expected from a closed symmetric monoidal category, see \cite[Section 1]{Hov09}.
    
    Let $X$ be a filtered space of formal dimension $n$ and $\overline{p}\in \pGM$. We will follow Chataur, Saralegui and Tanré's exposition of intersection homology given in \cite{CST18ration}. They present $I^{\overline{p}}C_\ast(X;R)$, the \emph{$\overline{p}$-intersection chain complex} of $X$ and define the \emph{blown-up complex of $\overline{p}$-intersection cochains} $\widetilde{N}^\ast_{\overline{p}}(X ;R)$ (also known as the \emph{Thom-Whitney complex}). The reader may refer to \cite{CST18BUP-Alpine}, \cite{CST18ration} and \cite{Fri20} for explicit constructions, we're just going to give the main properties of these complexes.

    \begin{proposition}
        Let $\overline{p}$ be a GM $n$-perversity. There is a functor
        \[\begin{array}{rrcl}
        I^{\overline{p}} C_\ast: & \nstrat & \to & Ch(R) \\
        ~& X & \mapsto & I^{\overline{p}} C_\ast(X ; R). \\
        \end{array}\]   
        from the category of stratified spaces of formal dimension $n$ to the category of chain complexes. Furthermore, the homology of $I^{\overline{p}} C_\ast(X;R)$, called \emph{$\overline{p}$-intersection homology} and denoted $I^{\overline{p}} H_\ast(X;R)$, verifies the following properties.
        \begin{description}[style=unboxed,leftmargin=0cm]
            \item[Mayer-Vietoris] If $\{U, V\}$ is an open cover of $X$, then there is a long exact sequence
            \[\ldots \to I^{\overline{p}}H_\ast(U\cap V) \to I^{\overline{p}}H_\ast(U)\oplus I^{\overline{p}}H_\ast(V)\to I^{\overline{p}}H_\ast(X)\to I^{\overline{p}}H_{\ast-1}(U\cap V)\to \ldots\]
            \item[Cone formula] If $X$ is compact, we have the \emph{cone formula}. For any $k\in \mathbb{N}$,
            \[I^{\overline{p}}H_k(\mathring{c}X; R)=\begin{cases}
                I^{\overline{p}}H_k(X; R) & \text{if }k\leq n-\overline{p}(n+1)-1, \\
                0 & \text{if } k \geq n-\overline{p}(n+1).
            \end{cases}\]
            \item[Topological invariance] If $X$ and $Y$ are two pseudomanifold of formal dimension $n$ which are homeomorphic (not necessarily stratified homeomorphic) then for every $\overline{p}\in\pGM$ we have an isomorphism
            \[I^{\overline{p}} H_\ast(X;R) \simeq I^{\overline{p}} H_\ast(Y;R).\]
            \item[Invariance under stratified homotopy equivalence] Let $f:X\to Y$ be a stratified homotopy equivalence between $X\in \nstrat$ and $Y\in \mstrat$. Let $\overline{p}$ and $\overline{q}$ be perversities of rank $n$ and $m$ respectively such that for any $S\in \mathcal{S}_X$, we have $\overline{p}(\codim(S))=\overline{q}(\codim(T))$  when $f(S)\subset T$.
            Then, we have an isomorphism
            \[I^{\overline{p}} H_\ast(X;R) \simeq I^{\overline{q}} H_\ast(Y;R).\]
        \end{description}
    \end{proposition}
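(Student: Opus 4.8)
The plan is to construct $I^{\overline{p}}C_\ast(X;R)$, following Goresky--MacPherson as presented by Chataur, Saralegui and Tanré, as the subcomplex of the singular chain complex spanned by $\overline{p}$-\emph{allowable} chains, and then to establish each listed property by the standard techniques of intersection homology, some elementary and some requiring substantial machinery. Recall that a singular simplex $\sigma:\Delta^i\to X$ is $\overline{p}$-allowable when $\dim\sigma^{-1}(S)\leq i-\codim(S)+\overline{p}(\codim(S))$ for every singular stratum $S$, and a chain $\xi$ is allowable when both $\xi$ and $\partial\xi$ are supported on allowable simplices. That this gives a subcomplex $I^{\overline{p}}C_\ast(X;R)\subseteq C_\ast(X;R)$ is immediate (the condition on $\partial\xi$ is exactly what keeps the differential internal). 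Functoriality is the first genuine check: given a stratified map $f:X\to Y$ and a stratum $S$ of $X$ with $f(S)\subset T$, one has $\codim(S)\geq\codim(T)$; since a GM-perversity satisfies $\overline{p}(c+1)\leq\overline{p}(c)+1$, the function $c\mapsto\overline{p}(c)-c$ is non-increasing, so $\overline{p}(\codim S)-\codim S\leq\overline{p}(\codim T)-\codim T$. As $f^{-1}(T)$ is the union of the strata $S$ with $f(S)\subset T$, feeding this inequality into the allowability bound shows $f_\ast$ preserves allowable chains, so $I^{\overline{p}}C_\ast(-;R)$ is a functor on $\nstrat$.

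For \textbf{Mayer--Vietoris} I would mimic the singular-homology argument, the only extra ingredient being that barycentric subdivision preserves $\overline{p}$-allowability, which holds because subdivision never enlarges the preimage $\sigma^{-1}(S)$ of a stratum. Writing $I^{\overline{p}}C^{\mathcal{U}}_\ast(X)$ for the complex of chains small with respect to the cover $\mathcal{U}=\{U,V\}$, iterated subdivision shows that the inclusion $I^{\overline{p}}C^{\mathcal{U}}_\ast(X)\hookrightarrow I^{\overline{p}}C_\ast(X)$ is a quasi-isomorphism; the short exact sequence
\[0\to I^{\overline{p}}C_\ast(U\cap V)\to I^{\overline{p}}C_\ast(U)\oplus I^{\overline{p}}C_\ast(V)\to I^{\overline{p}}C^{\mathcal{U}}_\ast(X)\to 0\]
then produces the long exact sequence in homology by the snake lemma.

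The \textbf{cone formula} is the computational core. Here I would introduce the cone operator sending an allowable chain on $\{t\}\times X$ to its join with the apex of $\mathring{c}X$, analyse precisely when this join remains allowable at the codimension-$(n+1)$ apex, and show that below the threshold $k=n-\overline{p}(n+1)$ every allowable cycle either already comes from $X\times\{t\}$ or bounds via coning, while at or above the threshold the allowability constraint at the apex forces the intersection homology to vanish. The bookkeeping of the dimensional inequality at the apex is exactly what splits the answer into the two stated cases. I expect this to be the most delicate \emph{computation}, though still elementary.

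The two invariance statements I would treat separately. \textbf{Invariance under stratified homotopy equivalence} follows from a stratified prism operator: a stratified homotopy $H:[0,1]\times X\to Y$ produces, on allowable chains, a chain homotopy between its two ends, because the prism construction respects the codimension conditions in the same way $f_\ast$ does above; combined with the codimension-preservation hypotheses on $f$ and $g$ this yields mutually inverse isomorphisms on $I^{\overline{p}}H_\ast$. \textbf{Topological invariance}, by contrast, is the deepest assertion and I do not expect any elementary chain-level proof: the standard route is the sheaf-theoretic characterisation of Goresky and MacPherson, exhibiting $I^{\overline{p}}C_\ast(X;R)$ as computing the hypercohomology of the Deligne sheaf, which depends only on $X$ and $\overline{p}$ and not on the chosen stratification (equivalently, a comparison with the intrinsic filtration). \textbf{This step is the main obstacle}, and since the proposition merely recollects established theory, I would delegate it to the references \cite{GM80}, \cite{CST18ration} and \cite{Fri20} rather than reproduce the Deligne-sheaf machinery here.
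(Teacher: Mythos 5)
Your proposal cannot be checked against an internal argument, because the paper offers none: this proposition sits in Appendix \ref{app:inter_hom} as a recollection of established theory, stated with pointers to \cite{GM80}, \cite{CST18BUP-Alpine}, \cite{CST18ration} and \cite{Fri20} in lieu of proof. Measured against those cited sources, your sketch is sound and reconstructs the standard arguments: the functoriality computation is exactly right (monotonicity of $c\mapsto\overline{p}(c)-c$ for a GM-perversity, combined with $\codim(S)\geq\codim(T)$ and the fact that $f^{-1}(T)$ is a union of strata mapped into $T$); Mayer--Vietoris via allowability-preserving subdivision, the cone-operator computation with threshold $k=n-\overline{p}(n+1)$, the stratified prism operator, and the delegation of topological invariance to sheaf theory all match the treatments in \cite{Fri20} and \cite{GM80}. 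Two fine points deserve flagging. First, your justification of the subdivision lemma (``subdivision never enlarges the preimage'') is too quick: allowability of a simplex of the subdivision is measured against the skeleta of \emph{its own} domain, not the original one, so the lemma is a genuine (if standard) verification, carried out carefully in \cite[Chapter 4]{Fri20}. Second, there is a model mismatch: you build $I^{\overline{p}}C_\ast$ from arbitrary allowable singular simplices (King's model), whereas the paper follows Chataur--Saralegui--Tanré, whose chains are generated by \emph{filtered} simplices $\sigma:\Delta_0\ast\cdots\ast\Delta_n\to X$. In that model, chain-level functoriality is subtler than your argument suggests --- one must check that the composite of a filtered simplex with a stratified map is again filtered, which uses connectedness of the join layers together with the frontier condition (this is precisely why the functor is defined on $\nstrat$, i.e.\ on stratified rather than merely filtered spaces); your model sidesteps this, at the price that the agreement of the two models is itself a theorem for the spaces at hand. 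Neither point affects correctness, and your choice to quote the Deligne-sheaf characterisation for topological invariance (King's sheaf-free argument would serve equally well) is consistent with how the paper itself handles the entire proposition.
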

    \begin{defi}
        Let $X$ be a filtered space of formal dimension $n$ and $\overline{p}\in \pGM$. By taking the linear dual of the $\overline{p}$-intersection chain complex, we define the \emph{dual $\overline{p}$-intersection cochain complex}, $I_{\overline{p}}C^\ast(X;R):=\Hom_R(I^{\overline{p}}C_\ast(X;R), R)$ and its homology is the \emph{dual $\overline{p}$-intersection cohomology} $I_ {\overline{p}}H^\ast(X;R)$.
    \end{defi}

    We can now state \emph{Poincaré duality}, which is the main reason intersection homology was introduced. 
    \begin{thm}[\textbf{(Poincaré duality)} - {\cite[Theorem 8.2.4]{Fri20}}]
        Let $X$ be an $n$-dimensional, compact and oriented pseudomanifold. Then, for every perversity $\overline{p}$ on $X$ there exists an isomorphism
        \[I_{\overline{p}}H^\ast(X; \mathbb{F}) \xrightarrow{\simeq} I^{D\overline{p}}H_{n-\ast}(X; \mathbb{F}).\]
    \end{thm}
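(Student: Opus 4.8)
The plan is to realize the duality isomorphism concretely as cap product with a fundamental class, and then to prove that this map is an isomorphism by an inductive ``bootstrap'' over the stratification of $X$, feeding on exactly the three structural properties recalled above: Mayer--Vietoris, the cone formula, and invariance under stratified homotopy equivalence.

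First I would construct the \emph{fundamental class}. Since $X$ is compact, connected and oriented of dimension $n$ and has no codimension-one strata, the regular part $X\setminus X_{n-1}$ is a connected oriented open $n$-manifold whose complement has codimension at least two; its orientation cycle is an allowable $n$-cycle for \emph{every} perversity, and it determines a generator $[X]$ of $I^{\overline{t}}H_n(X;\mathbb{F})\cong \mathbb{F}$. Cap product with a representative of $[X]$ then descends, for each perversity $\overline{p}$ and each degree $k$, to a map
\[
D_{\overline{p}}:\; I_{\overline{p}}H^k(X;\mathbb{F}) \longrightarrow I^{D\overline{p}}H_{n-k}(X;\mathbb{F}), \qquad \alpha \mapsto \alpha \cap [X].
\]
The perversity bookkeeping is forced by the additivity relation $\overline{p}+D\overline{p}=\overline{t}$: capping a $\overline{p}$-allowable cochain against a $\overline{t}$-allowable fundamental cycle yields a $(\overline{t}-\overline{p})=D\overline{p}$-allowable chain. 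I would first record that $D_{\overline{p}}$ is natural for restriction to open subsets, is compatible (up to the usual sign) with the Mayer--Vietoris connecting maps on both sides, and is compatible with stratified homotopy equivalences that preserve the fundamental class; this packages $D_{\overline{p}}$ as a morphism of Mayer--Vietoris functors.

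Next I would check that $D_{\overline{p}}$ is an isomorphism on the local models. On a chart meeting only the regular strata, $X$ looks like $\mathbb{R}^n$, intersection (co)homology coincides with ordinary (co)homology, and $D_{\overline{p}}$ reduces to classical local Poincaré duality. On a distinguished neighborhood $U\times \mathring{c}L$, invariance under stratified homotopy equivalence (the contraction of the Euclidean factor $U$ and of the cone onto its apex) lets me replace both sides by those of the cone $\mathring{c}L$, and the cone formula then computes them: it truncates $I_{\overline{p}}H^\ast(\mathring{c}L)$ above the value $\overline{p}(\mathrm{codim})$ and truncates $I^{D\overline{p}}H_\ast(\mathring{c}L)$ below the complementary value, the two ranges fitting together exactly because $D\overline{p}(j)=\overline{t}(j)-\overline{p}(j)=j-2-\overline{p}(j)$. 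In this way duality on $\mathring{c}L$ becomes equivalent to duality on the link $L$, which is a compact oriented pseudomanifold of strictly smaller dimension; assuming the theorem for all such, the base case is established on every distinguished neighborhood. Finally I would run the bootstrap: cover the compact $X$ by finitely many distinguished neighborhoods and their overlaps (again of local-model type or of smaller depth), and apply the five lemma to the ladder formed by the two Mayer--Vietoris long exact sequences linked by $D_{\overline{p}}$; induction on the number of covering pieces and on dimension then gives that $D_{\overline{p}}$ is an isomorphism on all of $X$.

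I expect the main obstacle to be the cone computation together with the correct normalization of the cap product. The delicate point is to construct the cap product at the chain level so that it respects allowability \emph{on the nose} (sending $\overline{p}$-allowable cochains and $\overline{t}$-allowable chains to genuinely $D\overline{p}$-allowable chains), rather than only up to homotopy, and to verify that the truncation degrees produced by the cone formula for cohomology and for homology are genuinely complementary. This is precisely where the defining identity $D\overline{p}=\overline{t}-\overline{p}$ of the dual perversity is indispensable. A secondary technical hurdle is the sign-accurate compatibility of $D_{\overline{p}}$ with the Mayer--Vietoris boundary maps, which is what makes the five-lemma ladder commute and hence makes the induction go through.
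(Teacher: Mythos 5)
You should first note that the paper does not prove this statement at all: it is quoted directly from Friedman's book, and your overall strategy (cap product with a fundamental class, local verification via the cone formula, Mayer--Vietoris bootstrap over a finite cover of distinguished neighborhoods, induction on depth) is indeed the strategy of the cited proof. However, as written your argument has a genuine gap at the local step, and it is exactly the point you yourself flag as delicate. The duality map between \emph{ordinary} intersection cohomology and \emph{ordinary} intersection homology is not an isomorphism on the non-compact local models, so the induction cannot get started. Already on a regular chart, $H^0(\mathbb{R}^n;\mathbb{F})=\mathbb{F}$ while $H_{n}(\mathbb{R}^n;\mathbb{F})=0$. The same failure occurs on a cone: if $L$ is a connected compact $(m-1)$-pseudomanifold, the cone formula quoted in the paper shows that $I_{\overline{p}}H^k(\mathring{c}L;\mathbb{F})$ (over a field, the linear dual of $I^{\overline{p}}H_k(\mathring{c}L;\mathbb{F})$) is supported in degrees $k\le m-2-\overline{p}(m)$, whereas $I^{D\overline{p}}H_{m-k}(\mathring{c}L;\mathbb{F})$ is supported in degrees $k\ge 2+D\overline{p}(m)=m-\overline{p}(m)$. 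These two ranges are disjoint, so contrary to your claim they do not ``fit together exactly'': in degree $0$ the source is $\mathbb{F}$ and the target is $0$.

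The missing ingredient is that the local-to-global argument must be run with compactly supported intersection cohomology $I_{\overline{p}}H^\ast_c$ as the source (equivalently, with Borel--Moore intersection homology as the target). This also repairs a second defect of your five-lemma ladder: the Mayer--Vietoris sequence for ordinary cohomology of an open cover runs in the direction opposite to the homology sequence, so the two rows cannot be joined by vertical maps to which the five lemma applies; the compactly supported sequence is covariant for open inclusions and lines up correctly with homology. One then proves $I_{\overline{p}}H^k_c(U;\mathbb{F})\cong I^{D\overline{p}}H_{n-k}(U;\mathbb{F})$ for all open $U$ by induction on depth together with a Mayer--Vietoris and colimit argument (the cone formula for compact supports now places the cohomology of $\mathring{c}L$ in the range complementary to that of the homology), and only at the very end invokes compactness of $X$ to identify $I_{\overline{p}}H^\ast_c(X)$ with $I_{\overline{p}}H^\ast(X)$. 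Your construction of the fundamental class and the perversity bookkeeping via $D\overline{p}=\overline{t}-\overline{p}$ are essentially correct, although the fundamental cycle is most naturally taken to be $\overline{0}$-allowable, as in the paper's chain-level duality for the blown-up complex, rather than merely $\overline{t}$-allowable.
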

    \subsection{Blown-up intersection cohomology}\label{subsect:bupinter}
    There exists several variants to intersection (co)homology. For instance, we have non-GM intersection (co)homology theories which are presented in \cite{Fri20} and the blown-up intersection cohomology which we now consider. Note that the blown-up cochain complex appears in several papers  \cite{CST18BUP-Alpine}, \cite{CST18ration},  \cite{CST18DP}, \cite{CST20Deligne} and \cite{ST20} to name a few. Our main reference for this section is \cite{CST18BUP-Alpine}.
    
        \subsubsection{Blown-up intersection cochain complex}
    For the rest of this subsection, let $X$ be a filtered space of formal dimension $n$ and let $\{X_i\}_{-1\leq i \leq n}$ be the associated filtration. We first need to define filtered simplices.
    \begin{defi}
        Let $k\in \mathbb{N}$. A \emph{Euclidean simplex} of dimension $k$ is the convex hull of $k+1$ affinely independent points in $\mathbb{R}^{k+1}$. The \emph{standard $k$-simplex} $\Delta^k$ is given by
        \[\Delta^k:=\left\{(x_0, \ldots, x_k)\text{ such that } \sum_{i=0}^k x_i=1, x_i\geq 0\right\}\subset \mathbb{R}^{k+1}.\]
        It can also be seen as the convex hull $\Delta^k=[e_0, \ldots, e_k]$ of the points $\{e_i\}_{0\leq i\leq k}$ where $e_i=(0, \ldots, 1, \ldots, 0)$ is the point whose $i^{\text{th}}$ coordinate is a 1.
    \end{defi}
    \begin{defi}
        Let $k\in \mathbb{N}$. A \emph{filtered $k$-simplex} of $X$ is a continuous map $\sigma:\Delta^k\to X$ such that $\sigma^{-1}(X_i)$ is a face of $\Delta^k$ or is empty for all $i\in\{0,\ldots,n\}$.
    \end{defi}      
    We now give a characterization of filtered simplicies which will be useful in what follows. We consider a filtered space $X$ of formal dimension $n$ and let $\{X_i\}_{-1\leq i \leq n}$ be the associated filtration.
    
    \begin{proposition}
         A continuous map $\sigma:\Delta^k\to X$ is a filtered $k$-simplex of $X$ if and only if there exists a decomposition of $\Delta^k$ into joins of simplices (eventually empty) 
            $\Delta^k= \Delta_0\ast\ldots \ast\Delta_n$ such that for all $i\in\{0,\ldots,n\}$, $\Delta_i$ is a Euclidean simplex and $\sigma^{-1}X_i=\Delta_0\ast \Delta_1 \ast \ldots \ast \Delta_i$.
    \end{proposition}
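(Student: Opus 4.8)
The plan is to reduce everything to the combinatorics of the vertex set of $\Delta^k=[e_0,\ldots,e_k]$. The single fact I would use repeatedly is that the faces of $\Delta^k$ are exactly the convex hulls of subsets of $\{e_0,\ldots,e_k\}$ (the empty subset giving the empty face), that one face is contained in another precisely when the corresponding vertex sets are nested, and that a join decomposition $\Delta^k=\Delta_0\ast\ldots\ast\Delta_n$ into Euclidean simplices is the same datum as an ordered partition of $\{e_0,\ldots,e_k\}$ into $n+1$ (possibly empty) blocks $B_0,\ldots,B_n$, where $B_i$ is the vertex set of $\Delta_i$. Because the $e_j$ are affinely independent, the join of the $\Delta_i$ is realized as the honest convex hull of $B_0\cup\ldots\cup B_n$; in particular every partial join $\Delta_0\ast\ldots\ast\Delta_i$ is automatically the face of $\Delta^k$ spanned by $B_0\cup\ldots\cup B_i$.

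For the forward implication, I would start from a filtered simplex $\sigma$. Since the filtration is increasing, the preimages form an increasing chain
\[\emptyset=\sigma^{-1}(X_{-1})\subseteq \sigma^{-1}(X_0)\subseteq\ldots\subseteq\sigma^{-1}(X_n)=\Delta^k,\]
and by hypothesis each $\sigma^{-1}(X_i)$ is a face of $\Delta^k$ or empty. Writing $V_i$ for the vertex set of $\sigma^{-1}(X_i)$ (with $V_{-1}=\emptyset$ and $V_n=\{e_0,\ldots,e_k\}$), the nesting of faces translates into a chain of subsets $V_{-1}\subseteq V_0\subseteq\ldots\subseteq V_n$. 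I would then set $\Delta_i$ to be the convex hull of $V_i\setminus V_{i-1}$ for $0\leq i\leq n$. The successive differences $V_i\setminus V_{i-1}$ partition $\{e_0,\ldots,e_k\}$, so by the fact recalled above $\Delta_0\ast\ldots\ast\Delta_n=\Delta^k$, and the partial join $\Delta_0\ast\ldots\ast\Delta_i$ is the face spanned by $V_i$, which is exactly $\sigma^{-1}(X_i)$. This yields the desired decomposition.

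Conversely, if such a decomposition exists, then each $\sigma^{-1}(X_i)=\Delta_0\ast\ldots\ast\Delta_i$ is a partial join, hence a face of $\Delta^k$ (or empty, in case $\Delta_0,\ldots,\Delta_i$ are all empty). Thus $\sigma$ satisfies the defining condition of a filtered simplex, completing the equivalence.

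I do not expect a serious obstacle: the content is entirely the dictionary ``increasing chain of faces $\leftrightarrow$ ordered partition of the vertex set.'' The only points requiring care are the bookkeeping of empty faces (treating $\emptyset$ as the face with empty vertex set, so that the chain legitimately begins at $V_{-1}=\emptyset$) and verifying that, inside the standard simplex, the join of the $\Delta_i$ genuinely coincides with $\Delta^k$ rather than merely a homeomorphic model. The latter holds because disjoint subsets of $\{e_0,\ldots,e_k\}$ are affinely independent, so the relevant joins are honest convex hulls.
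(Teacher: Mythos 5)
Your argument is correct and complete: the dictionary between increasing chains of faces of $\Delta^k$ and ordered partitions of its vertex set $\{e_0,\ldots,e_k\}$ is exactly the right mechanism, and you handle the two delicate points (empty blocks, and the fact that partial joins of faces spanned by disjoint vertex subsets are honest faces because the $e_j$ are affinely independent) explicitly. Note that the paper states this proposition without proof, recalling it from the literature on filtered simplices (\cite{CST18BUP-Alpine}, \cite{Fri20}), so there is no in-paper argument to compare against; your proof is the standard one and fills that gap correctly.
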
    
    
    \begin{defi}
    Let $\Delta$ be a Euclidean simplex. A \emph{regular filtered simplex} of $X$ is a filtered simplex $\sigma: \Delta \to X$ such that the decomposition of $\Delta$ into joins of simplices $\Delta= \Delta_0\ast\ldots \ast\Delta_n$ is such that $\Delta_n\neq \emptyset$.
    \end{defi}    
    \begin{defi}
    Let $X$ be a simplicial complex. We denote by $(N_\ast(X), \partial)$ the \emph{simplicial chain complex} of $X$ with coefficient in $R$. Its degree $k$ component is given by 
    \[N_k(X)=\left\{\sum_{\text{finite}} \lambda_i\sigma _i \,|\, \lambda_i\in R \text{ and } \sigma_i \text{ $k$-simplices}\right\}.\]
    If $\sigma=[v_0, \ldots, v_k]$ is a $k$-simplex, its differential is defined by
    \[\partial(\sigma)=\sum_{i=0}^k (-1)^i [v_0, \ldots, v_{i-1}, v_{i+1}, \ldots, v_k].\]
    The \emph{simplicial cochain complex} $(N^\ast(X), \delta)$ associated to $X$ is obtained by setting for any $k\in \mathbb{N}$, $N^k(X)=\Hom_R(N_ k(X), R)$ and with $\delta$ the dual map of $\partial$.
    \end{defi}
    \begin{defi}
    Let $\Delta= \Delta_0\ast\ldots \ast\Delta_n$ be a regular Euclidean simplex. The \emph{blown-up cochain complex} $(\widetilde{N}^\ast(\Delta), d)$ associated to $\Delta$ is the cochain complex defined as the following tensor product of complexes
    \[\widetilde{N}^\ast(\Delta)=N^\ast(c\Delta_0)\otimes \ldots \otimes N^\ast(c\Delta_{n-1})\otimes N^\ast(\Delta_n).\]
    The degree $k\in \mathbb{N}$ component of this cochain complex is
    \[\widetilde{N}^k(\Delta)= \bigoplus_{i_0+\ldots+i_n=k} N^{i_0}(c\Delta_0)\otimes \ldots \otimes N^{i_{n-1}}(c\Delta_{n-1})\otimes N^{i_n}(\Delta_n).\]
    The differential $d$ is given for $f_0\otimes \ldots \otimes f_n\in \widetilde{N}^\ast(\Delta)$ by
    \[d(f_0\otimes \ldots \otimes f_n)=\sum_{i=0}^n (-1)^{\eps_i} f_0\otimes \ldots \otimes\delta(f_i)\otimes \ldots \otimes f_n\]
    with $\eps_i=\sum_{0\leq j <i} \Real{f_j}$.
    \end{defi}

    We're going to define a perverse degree for elements in $\widetilde{N}^\ast(\Delta)$ for $\Delta= \Delta_0\ast\ldots \ast\Delta_n$. To do so, we need to explain a few things. Let $k\in \{0, \ldots, n-1\}$ and $\omega \in \widetilde{N}^\ast(\Delta)$. If $\Delta_k\neq \emptyset$, we denote by $\omega_k$ the restriction of $\omega$ to
    \[N^\ast(c\Delta_0)\otimes \ldots \otimes N^\ast(\Delta_{k}\times \{1\}) \otimes \ldots \otimes N^\ast(c\Delta_{n-1})\otimes N^\ast(\Delta_n).\]
    The term $\omega_k$ can be written as $\omega_k=\sum_l \omega_k'(l)\otimes \omega_k''(l)$ where
    \begin{itemize}
        \item $\omega_k'(l)\in N^\ast(c\Delta_0)\otimes \ldots \otimes N^\ast(\Delta_{k}\times \{1\})$
        \item and $\omega_k''(l)\in N^\ast(c\Delta_{k+1}) \otimes \ldots \otimes N^\ast(c\Delta_{n-1})\otimes N^\ast(\Delta_n)$.
    \end{itemize}
    We denote by $\Real{\omega_k''(l)}$ the degree of $\omega_k''(l)$ in $N^\ast(c\Delta_{k+1}) \otimes \ldots \otimes N^\ast(c\Delta_{n-1})\otimes N^\ast(\Delta_n)$.
    \begin{defi}
        Let $\Delta=\Delta_0\ast\ldots\ast \Delta_n$ be a regular Euclidean simplex and $\omega \in \widetilde{N}^\ast(\Delta)$. For $k\in \{1, \ldots, n\}$, we define the \emph{$k$-perverse degree} by
        \[\norm{\omega}_k:=\sup_l\{\Real{\omega_{n-k}''(l)} \text{ such that } \omega_{n-k}'(l)\neq 0\}.\]
        We set $\norm{\omega}_k=-\infty$ if $\omega_{n-k}=0$ or $\Delta_{n-k}=\emptyset$. 
    \end{defi} 

    \begin{defi}
    The \emph{blown-up cochain complex} of a filtered space $X$ is the cochain complex $\widetilde{N}^\ast(X;R)$ made of elements $\omega$ that associate to each regular simplex $\sigma:\Delta\to X$ an element $\omega_\sigma \in \widetilde{N}^\ast(\Delta)$. The differential of $\omega$, $d\omega$ is defined by $(d\omega)_\sigma = d (\omega_\sigma)$. \\
    The \emph{perverse degree of $\omega$ along a singular stratum $S$} is given by
    \[\norm{\omega}_S:=\sup\{\norm{\omega_\sigma}_{\codim S} \,|\, \sigma: \Delta \to X \text{ regular such that } \sigma(\Delta)\cap S \neq  \emptyset \}.\]
    We get a map $\norm{\omega}: \mathcal{S}_X \to \mathbb{N}$ by setting $\norm{\omega}_S=0$ for any regular stratum $S$.
    \end{defi}

    \begin{defi}
    A cochain $\omega\in\widetilde{N}^\ast(X ;R)$ is \emph{$\overline{p}$-allowable} if $\norm{\omega}\leq \overline{p}$ and of \emph{$\overline{p}$-intersection} if $\omega$ and $d\omega$ are $\overline{p}$-allowable. The \emph{blown-up complex of $\overline{p}$-intersection cochains} is denoted $\widetilde{N}^\ast_{\overline{p}}(X ;R)$. Its homology is called the \emph{blown-up $\overline{p}$-intersection cohomology} of $X$ and is denoted $\mathscr{H}^\ast_{\overline{p}}(X ;R)$.   
    \end{defi} 
    The blown-up intersection cohomology can be defined for more general perversities. It coincides with intersection cohomology as it was defined by Goresky and MacPherson. Furthermore, it corresponds to the dual intersection cohomology when we work on a field and for GM-perversities.
    \begin{thm}[{\cite[Theorem B]{CST18ration}}]\label{thm:field_intercoho_iso}
        Suppose that $R$ is a field and let $X$ be a filtered space of formal dimension $n$. For any $\overline{p}\in\pGM$, we have a quasi-isomorphism
        \[\Inter:\widetilde{N}^\ast_{\overline{p}}(X ;R)\to I_{\overline{t}-\overline{p}}C^\ast(X ;R).\]
    \end{thm}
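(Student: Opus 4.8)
The plan is to build an explicit cochain-level comparison map and then show it is a quasi-isomorphism by a local-to-global (cone-plus-Mayer--Vietoris) argument, with the field hypothesis entering only to make a universal-coefficient duality exact. The starting observation is the complementarity $\overline{p}+(\overline{t}-\overline{p})=\overline{t}$, so that $\overline{t}-\overline{p}=D\overline{p}$ and the target $I_{D\overline{p}}C^\ast(X;R)=\Hom_R(I^{D\overline{p}}C_\ast(X;R),R)$ is the linear dual of the $D\overline{p}$-intersection chains. First I would construct a pairing
\[\langle-,-\rangle:\widetilde{N}^\ast_{\overline{p}}(X;R)\otimes I^{D\overline{p}}C_\ast(X;R)\to R\]
by evaluating, on each regular filtered simplex $\sigma:\Delta_0\ast\ldots\ast\Delta_n\to X$, the blown-up cochain $\omega_\sigma\in N^\ast(c\Delta_0)\otimes\ldots\otimes N^\ast(c\Delta_{n-1})\otimes N^\ast(\Delta_n)$ against $\sigma$ through the natural restriction of each cone factor $N^\ast(c\Delta_i)$ to its base $\Delta_i\times\{1\}$; the complementarity of perversities is exactly what guarantees that a $\overline{p}$-allowable cochain pairs well with a $D\overline{p}$-allowable chain, so the pairing descends to the intersection subcomplexes. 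By Hom--tensor adjunction this pairing is the same datum as the map $\Inter$, and I would then check that it is a chain map (matching the blown-up differential $d=d_0+d_1$ against the simplicial boundary) and natural in $X$.

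Next I would show $\Inter$ induces an isomorphism in cohomology by reducing to local models. Both functors $\widetilde{N}^\ast_{\overline{p}}$ and $I_{\overline{t}-\overline{p}}C^\ast$ (as $\overline{p}$ varies) send open covers to Mayer--Vietoris long exact sequences, and $\Inter$ is natural, so the five-lemma lets me propagate the isomorphism over a good cover once it is known on the basic opens $U\times\mathring{c}L$, where $U$ is Euclidean and $L$ a compact link of lower formal dimension. On such a cone I would compare the two cone formulas: intersection homology of $\mathring{c}L$ is a truncation of $I^{D\overline{p}}H_\ast(L)$ in a range governed by $(D\overline{p})(\dim\mathring{c}L)$, while the blown-up cohomology of $\mathring{c}L$ truncates $\mathscr{H}^\ast_{\overline{p}}(L)$ in the complementary range governed by $\overline{p}(\dim\mathring{c}L)$. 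Because $\overline{p}(m)+(D\overline{p})(m)=\overline{t}(m)$ for every $m$, the two truncation ranges are dual to one another, and an induction on the formal dimension (equivalently on the depth of the filtration), using the inductive isomorphism on $L$, shows $\Inter$ is an isomorphism on the cone model; the base case is a single point, where both complexes are $R$ concentrated in degree $0$.

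The role of the field hypothesis is to make the universal coefficient theorem exact: over a field there is no $\Ext$ term, so $H^\ast\bigl(\Hom_R(I^{D\overline{p}}C_\ast,R)\bigr)\cong\Hom_R(I^{D\overline{p}}H_\ast,R)$, and the perfect pairing from the first step turns the homological cone computation into the cohomological one needed for the comparison. This is the only place where $R$ being a field is used, and it is essential: without it the dual complex would not compute the dual of intersection homology, and $\Inter$ could fail to be a quasi-isomorphism.

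The main obstacle is that $X$ is assumed only to be a \emph{filtered space}, not a CS set or a pseudomanifold, so it need not admit local cone neighbourhoods and the clean Mayer--Vietoris-plus-cone-formula bootstrap of the previous paragraph is not literally available. The honest way around this is to work simplicially: both complexes are built from the filtered simplices of $X$, whose model objects are the single regular simplices $\Delta_0\ast\ldots\ast\Delta_n$, and these are themselves cone-like, so the explicit cone computation applies to each model. I would then invoke a method-of-acyclic-models argument, extending the natural transformation $\Inter$ from the models to all of $X$ and showing it is a quasi-isomorphism there. Verifying the acyclicity of the relevant model complexes and the compatibility of $\Inter$ with the filtered face maps while tracking the perverse degree $\norm{\cdot}_k$ is where the bulk of the technical work would lie.
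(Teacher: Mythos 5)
First, a caveat: the paper does not prove this statement --- it is imported verbatim from \cite[Theorem B]{CST18ration} and used as a black box --- so there is no in-paper argument to compare yours against, only the proof in that reference. Measured against that, your overall architecture is the right one: an explicit cochain-level map $\Inter$ defined simplex by simplex, naturality, a Mayer--Vietoris plus cone-formula bootstrap, and the field hypothesis entering only through the universal coefficient theorem. Your check that the two cone formulas are exchanged by $\overline{p}\mapsto\overline{t}-\overline{p}$ is correct and is indeed the heart of the local computation: since $\overline{t}(n+1)=n-1$, the dual of the chain-level truncation at $k\geq n-(\overline{t}-\overline{p})(n+1)$ and the blown-up truncation at $k>\overline{p}(n+1)$ occur at the same degree.

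There are, however, two concrete problems. First, the pairing you write down is not $\Inter$ and does not typecheck. Restricting each factor $N^\ast(c\Delta_i)$ to its base $\Delta_i\times\{1\}$ lands in $N^\ast(\Delta_0)\otimes\ldots\otimes N^\ast(\Delta_n)$, whose top degree is $\sum_i\dim\Delta_i=\dim\Delta-n$, so evaluation against the $\dim\Delta$-dimensional simplex $\sigma$ vanishes identically in the only degree that matters; moreover, restriction to the bases is exactly the operation used in the paper to \emph{define} the perverse degree, so composing all of them discards precisely the cone directions that carry the perversity information. The actual map evaluates $\omega_\sigma$ on a canonical fundamental chain of the blown-up prism $c\Delta_0\times\ldots\times c\Delta_{n-1}\times\Delta_n$, which has the same dimension as $\Delta=\Delta_0\ast\ldots\ast\Delta_n$; getting the perversity bookkeeping of that evaluation right is where the content of the construction lies. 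Second, you correctly identify that a general filtered space has no local cone structure, so the Mayer--Vietoris bootstrap only covers CS sets and pseudomanifolds; but the fix you propose (an acyclic-models argument over the simplicial models) is left entirely unexecuted, and this is where essentially all of the work of \cite{CST18ration} is concentrated --- the theorem is established there at the level of filtered face sets, by induction over skeleta using that each model $\Delta_0\ast\ldots\ast\Delta_n$ is an iterated cone, and then transferred to filtered spaces. As it stands, your proposal is a sound outline of the strategy with an incorrect formula for the comparison map and the hardest step deferred.
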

        \subsubsection{Properties of the blown-up intersection cohomology}
    Just like we did for intersection homology, we list the main properties of the blown-up intersection cohomology. A proof of the following results can be found in \cite{CST18BUP-Alpine}.
    \begin{proposition}\label{prop:bup_prop}
        Let $\overline{p}$ be a GM $n$-perversity. There is a functor
        \[\begin{array}{rrcl}
        \widetilde{N}^\ast_{\overline{p}}: & (\nstrat)^{op} & \to & Ch(R) \\
        ~& X & \mapsto & \widetilde{N}^\ast_{\overline{p}}(X ; R). \\
        \end{array}\]    
        Furthermore, the blown-up $\overline{p}$-intersection cohomology, $\mathscr{H}^{\overline{p}}_\ast(X;R)$, verifies the following properties.
        \begin{description}[style=unboxed,leftmargin=0cm]
            \item[Mayer-Vietoris] Suppose that $X$ is paracompact and has an open cover $\{U, V\}$, then there is a long exact sequence
            \[\ldots \to \mathscr{H}_{\overline{p}}^\ast(X;R) \to \mathscr{H}_{\overline{p}}^\ast(U;R)\oplus \mathscr{H}_{\overline{p}}^\ast(V;R)\to \mathscr{H}_{\overline{p}}^\ast(U\cap V; R)\to \mathscr{H}_{\overline{p}}^{\ast+1}(X;R)\to \ldots \]
            \item[Cone formula] If $X$ is a compact $n$-pseudomanifold, we have,
            \[\mathscr{H}^k_{\overline{p}}(\mathring{c}X; R)=\begin{cases}
                \mathscr{H}^k_{\overline{p}}(X; R) & \text{if }k\leq\overline{p}(n+1), \\
                0 & \text{if } k> \overline{p}(n+1)
            \end{cases}\]
            where $\overline{p}$ is a perversity on $\mathring{c}X$.
            \item[Topological invariance] If $X$ and $Y$ are two pseudomanifolds of formal dimension $n$ which are homeomorphic (not necessarily stratified homeomorphic) then for every $\overline{p}\in\pGM$ we have an isomorphism
            \[\mathscr{H}^\ast_{\overline{p}}(X; R) \simeq \mathscr{H}^\ast_{\overline{p}}(Y; R).\]
            \item[Invariance under stratified homotopy equivalence] Let $f:X\to Y$ be a stratified homotopy equivalence between $X\in \nstrat$ and $Y\in \mstrat$. If $\overline{p}$ and $\overline{q}$ are $f$-compatible perversities, then we have an isomorphism
            \[\mathscr{H}^\ast_{\overline{p}}(X; R) \simeq \mathscr{H}^\ast_{\overline{q}}(Y; R).\]
        \end{description}
    \end{proposition}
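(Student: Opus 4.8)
The plan is to obtain each of the listed properties by transposing the classical machinery of intersection cohomology to the blown-up complex, exactly as is done in \cite{CST18BUP-Alpine}; I indicate the route and where the real work lies. For \textbf{functoriality}, I would first define, for a stratified map $f:X\to Y$ of stratified spaces, the pullback $f^\ast$ on the ambient blown-up complexes by precomposition, $\omega\mapsto(\sigma\mapsto\omega_{f\circ\sigma})$ on regular simplices, and observe that it is a cochain map. The only substantive point is that $f^\ast$ restricts to $\overline{p}$-intersection cochains, i.e.\ that it does not raise the perverse degree. This is forced by the defining inequality of a stratified map: whenever $f(S)\subset T$ one has $\codim(S)\geq\codim(T)$, so a cochain allowable along every singular stratum of $Y$ pulls back to one allowable along every singular stratum of $X$. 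Contravariance and compatibility with $d$ are then immediate, yielding the functor $\widetilde{N}^\ast_{\overline{p}}:(\nstrat)^{op}\to Ch(R)$.

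For \textbf{Mayer--Vietoris and the cone formula}, I would reproduce the classical singular arguments. Given a paracompact $X$ with open cover $\{U,V\}$, a barycentric subdivision performed compatibly with the filtration (so that the join decomposition $\Delta=\Delta_0\ast\cdots\ast\Delta_n$, and hence the perverse degree, is preserved) shows that the restriction maps fit into a short exact sequence of cochain complexes $0\to\widetilde{N}^\ast_{\overline{p}}(X;R)\to\widetilde{N}^\ast_{\overline{p}}(U;R)\oplus\widetilde{N}^\ast_{\overline{p}}(V;R)\to\widetilde{N}^\ast_{\overline{p}}(U\cap V;R)\to 0$, whose associated long exact sequence is the one asserted. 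The cone formula is a direct computation: using the tensor-product description $\widetilde{N}^\ast(\Delta)=N^\ast(c\Delta_0)\otimes\cdots\otimes N^\ast(\Delta_n)$ together with the local structure of $\mathring{c}X$, one builds an explicit contracting homotopy near the apex. The allowability constraint along the singular apex stratum truncates the admissible degrees precisely at $\overline{p}(n+1)$, giving $\mathscr{H}^k_{\overline{p}}(\mathring{c}X;R)\simeq\mathscr{H}^k_{\overline{p}}(X;R)$ for $k\leq\overline{p}(n+1)$ and $0$ above.

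For \textbf{invariance under stratified homotopy equivalence}, I would construct a filtered prism (chain homotopy) operator associated to a stratified homotopy $H:[0,1]\times X\to Y$; since $H$ respects the filtration, the operator does not raise perverse degree, and it shows that stratified homotopic maps induce the same map in blown-up cohomology. Combined with functoriality, a stratified homotopy equivalence between $f$-compatible perversities then provides mutually inverse isomorphisms $\mathscr{H}^\ast_{\overline{p}}(X;R)\simeq\mathscr{H}^\ast_{\overline{q}}(Y;R)$.

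The genuinely hard property is \textbf{topological invariance}, since a homeomorphism of pseudomanifolds need not be stratified, and so none of the above chain-level tools apply directly. The strategy I expect to be forced into is the sheaf-theoretic one of Goresky--MacPherson and King: present $\widetilde{N}^\ast_{\overline{p}}(-;R)$ as the hypercohomology of a complex of sheaves on $X$, verify that this complex satisfies the Deligne axioms characterizing the intersection-cohomology sheaf for a GM perversity, and then invoke the uniqueness of the Deligne sheaf up to quasi-isomorphism — a characterization phrased purely in terms of the underlying topological space and its local cohomological behaviour, with no reference to a chosen stratification. This is exactly where the local cone formula re-enters, as it supplies the input needed to check the attachment (costalk) axioms, and the invariance then follows from the intrinsic-filtration argument. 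This step, carried out in \cite{CST18BUP-Alpine} (see also \cite{CST18DP,CST20Deligne}), is the substantive one; the remaining properties are essentially formal once the perverse-degree bookkeeping is under control.
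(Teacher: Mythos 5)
The paper does not actually prove this proposition: it is imported verbatim from the literature, with the sentence ``A proof of the following results can be found in \cite{CST18BUP-Alpine}'' standing in for a proof. Your roadmap is therefore not competing with an argument in the paper but with the cited references, and as a roadmap it is faithful to them: functoriality via pullback along stratified maps with the perverse degree controlled by $\codim(S)\geq\codim(T)$, Mayer--Vietoris and the cone formula by filtered analogues of the classical singular arguments, stratified homotopy invariance via a filtered prism operator, and topological invariance as the one genuinely deep point, handled either by King-style intrinsic stratifications or by identifying the sheafification of $\widetilde{N}^\ast_{\overline{p}}$ with the Deligne sheaf (the latter is carried out in \cite{CST20Deligne}). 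Two places where your sketch is glibber than the actual constructions deserve flagging. First, the pullback is not literally precomposition: a cochain $\omega$ assigns to a regular simplex $\sigma:\Delta\to X$ an element of $\widetilde{N}^\ast(\Delta)$ computed from the join decomposition $\Delta=\Delta_0\ast\cdots\ast\Delta_n$ induced by the filtration of the \emph{target}, and for $f\circ\sigma:\Delta\to Y$ this decomposition is in general coarser than the one induced by $X$; one needs the explicit comparison maps between the two tensor-product complexes (this is where the codimension inequality is actually used), not just the observation that allowability is preserved. Second, for Mayer--Vietoris the surjectivity of the restriction map onto $\widetilde{N}^\ast_{\overline{p}}(U\cap V;R)$ and the passage to small simplices require a subdivision operator compatible with the blown-up (cone-by-cone) structure, which is a nontrivial construction; the paracompactness hypothesis in the statement signals that the argument in \cite{CST18BUP-Alpine} is run at the level of (pre)sheaves of blown-up cochains rather than by a bare two-set subdivision argument. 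Neither point invalidates your outline, but both are where the real work in the cited proofs lives.
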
    

        \subsubsection{Cup and cap products}
    We now recall results regarding algebraic structures found on the blown-up intersection complex. The construction of the cup and cap products appear in \cite[Section 4 and 6]{CST18BUP-Alpine}.

    \begin{proposition}[{\cite[Proposition 4.2]{CST18BUP-Alpine}}]\label{prop:cup_bup}
        Let $X$ be a filtered space of formal dimension $n$. For any $\overline{p}, \overline{q}\in \pGM$ and $i,j\in \mathbb{N}$, there exists an associative multiplication
        \[-\cup-:\widetilde{N}^i_{\overline{p}}(X ;R)\otimes \widetilde{N}^j_{\overline{q}}(X ;R)\to \widetilde{N}^{i+j}_{\overline{p}\oplus \overline{q}}(X ;R).\]
        It induces a graded commutative multiplication in homology called \emph{intersection cup product}
        \[-\cup-:\mathscr{H}^i_{\overline{p}}(X ;R)\otimes \mathscr{H}^j_{\overline{q}}(X ;R)\to \mathscr{H}^{i+j}_{\overline{p}\oplus \overline{q}}(X ;R).\]
    \end{proposition}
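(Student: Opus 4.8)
The plan is to construct the product simplexwise and then recognize it as part of the operadic structure already available. Recall that on a regular simplex $\Delta=\Delta_0\ast\ldots\ast\Delta_n$ the blown-up complex is the tensor product $\widetilde{N}^\ast(\Delta)=N^\ast(c\Delta_0)\otimes\ldots\otimes N^\ast(c\Delta_{n-1})\otimes N^\ast(\Delta_n)$ of simplicial cochain complexes, each of which carries the Alexander--Whitney cup product (front face against back face). So for $\omega_\sigma=f_0\otimes\ldots\otimes f_n$ and $\eta_\sigma=g_0\otimes\ldots\otimes g_n$ I would set
\[(\omega\cup\eta)_\sigma:=\pm\,(f_0\cup g_0)\otimes\ldots\otimes(f_n\cup g_n),\]
the sign being the Koszul sign obtained by commuting each $g_i$ past the $f_j$ with $j>i$. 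This is exactly the evaluation $\omega\cup\eta:=\lambda(\mu;\omega,\eta)$ of the perverse $\mathcal{E}_+$-algebra structure of Proposition \ref{prop:BE_alg_bup_cochains} on the degree-zero generator $\mu\in\mathcal{E}_+(2)_0$ equal to the image of the associative product under $As\to\mathcal{E}$, since on normalized simplicial cochains the Barratt--Eccles action in arity two and degree zero reproduces the Alexander--Whitney product.

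Next I would check that $\cup$ is a morphism of perverse cochain complexes, hence descends to $\mathscr{H}^\ast_\bullet$. Well-definedness on all of $X$ (not just on one $\Delta$) follows from the naturality of the Alexander--Whitney product, which is the same compatibility used to define $\widetilde{N}^\ast(X;R)$ itself. Since $\mu$ is a degree-zero cycle, $\lambda(\mu;-,-)$ is a chain map, giving the Leibniz rule $d(\omega\cup\eta)=d\omega\cup\eta+(-1)^{\Real{\omega}}\omega\cup d\eta$ with signs matching those of the tensor-product differential. Strict associativity at the chain level comes from associativity of the Alexander--Whitney product on each factor, equivalently from the relation $\mu\circ_1\mu=\mu\circ_2\mu$ holding in $\mathcal{E}(3)_0\cong As(3)$.

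The main obstacle is the perverse-degree estimate $\norm{\omega\cup\eta}\leq\overline{p}\oplus\overline{q}$. The perverse degree along a codimension-$k$ stratum is local and is computed simplexwise through the front/back splitting $\omega_{n-k}=\sum_l\omega'_{n-k}(l)\otimes\omega''_{n-k}(l)$ obtained by restricting to $\Delta_{n-k}\times\{1\}$. Because the restriction $\iota_{n-k}$ is a simplicial map it commutes with $\cup$, and the Alexander--Whitney product respects this front/back decomposition: a nonzero back component of $\omega\cup\eta$ is a product $\omega''_{n-k}(l)\cup\eta''_{n-k}(l')$ of degree $\Real{\omega''_{n-k}(l)}+\Real{\eta''_{n-k}(l')}$, arising only when the front product $\omega'_{n-k}(l)\cup\eta'_{n-k}(l')$ is nonzero, hence only when both front factors are nonzero. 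This yields $\norm{\omega\cup\eta}_k\leq\norm{\omega}_k+\norm{\eta}_k$ for each $k$, so $\norm{\omega\cup\eta}\leq\norm{\omega}+\norm{\eta}\leq\overline{p}+\overline{q}\leq\overline{p}\oplus\overline{q}$; the bound on $d(\omega\cup\eta)$ then follows from the Leibniz rule together with the fact that $\omega,d\omega$ and $\eta,d\eta$ are respectively $\overline{p}$- and $\overline{q}$-allowable. This is precisely the computation carried out in the proof of Proposition \ref{prop:BE_alg_bup_cochains} for a general operadic operation, specialized to the arity-two generator $\mu$, so I would simply invoke it.

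Finally, graded commutativity holds only up to homotopy, which is why it is asserted in cohomology. I would use the degree-one element $\nu\in\mathcal{E}_+(2)_1$ with $d\nu=(2,1)-1_2$: the equivariance relation gives $\omega\cup\eta=(-1)^{\Real{\omega}\Real{\eta}}\lambda((2,1);\eta,\omega)$, while $\lambda(\nu;\eta,\omega)$ furnishes a perverse chain homotopy between $\lambda((2,1);\eta,\omega)$ and $\lambda(1_2;\eta,\omega)=\eta\cup\omega$. Hence $\omega\cup\eta-(-1)^{\Real{\omega}\Real{\eta}}\eta\cup\omega$ is a boundary, and the homotopy inherits its perversity bound from the same estimate of Proposition \ref{prop:BE_alg_bup_cochains}. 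Passing to homology then gives the associative, graded commutative intersection cup product on $\mathscr{H}^\ast_\bullet(X;R)$.
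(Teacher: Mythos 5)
Your argument is correct, but it is worth situating it against the paper: the paper gives no proof of this proposition at all --- it is quoted verbatim from \cite[Proposition 4.2]{CST18BUP-Alpine}, where the cup product is built directly from the Alexander--Whitney diagonal on each tensor factor of $\widetilde N^\ast(\Delta)$ and commutativity in cohomology is obtained from explicit $\cup_1$-type homotopies. What you do instead is recognize the product as the arity-two, degree-zero shadow of the perverse $\mathcal{E}_+$-algebra structure of Proposition \ref{prop:BE_alg_bup_cochains}, and then import wholesale the perverse-degree estimate $\norm{\widetilde\lambda(\omega;g_1,\ldots,g_m)}_k\leq\sum_i\norm{g_i}_k$ and the naturality $\tilde\iota_{n-k}\circ\widetilde\lambda=\widetilde\lambda\circ\tilde\iota_{n-k}^{\otimes m}$ established there; commutativity then comes for free from the contractibility of $\mathcal{E}_+(2)$, with the homotopy $\lambda(\nu;-,-)$ automatically satisfying the same perversity bound because $\nu$ has non-positive cohomological degree. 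This is a legitimate and arguably cleaner route: it trades the explicit combinatorics of the Alexander--Whitney and $\cup_1$ formulas for the single uniform estimate in Proposition \ref{prop:BE_alg_bup_cochains}, and there is no circularity since that proposition is proved from Berger--Fresse's structure on normalized cochains and the operad diagonal $\mathcal{E}_+\to\mathcal{E}_+\otimes\mathcal{E}_+$, not from the cup product. The only points you should make explicit are (a) that the degree-zero arity-two operation of the Berger--Fresse action on normalized simplicial cochains really is the Alexander--Whitney product (true, via the projection to the surjection operad), so that the product you construct agrees with the one of \cite{CST18BUP-Alpine}, and (b) in the perverse-degree step, that the front/back splitting of $(\omega\cup\eta)_{n-k}$ is controlled termwise rather than only after possible cancellations --- the same care the paper takes when it passes from $\tilde\iota_{n-k}(g_i)\neq 0$ to the bound on $\norm{\widetilde\lambda(\omega;g_1,\ldots,g_m)}_k$.
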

    
    \begin{proposition}[{\cite[Propostion 6.7]{CST18BUP-Alpine}}]\label{prop:cap_bup}
        Let $X$ be a filtered space of formal dimension $n$. For any $\overline{p}, \overline{q}\in \pGM$ and $i,j\in \mathbb{N}$, there is a well defined \emph{intersection cap product}
        \[-\cap-:\widetilde{N}^i_{\overline{p}}(X ;R)\otimes I^{\overline{q}}C_j(X ;R)\to I^{\overline{p}\oplus \overline{q}}C_{j-i}(X ;R).\]
        It induces a morphism in homology
        \[-\cap-:\mathscr{H}^i_{\overline{p}}(X ;R)\otimes I^{\overline{q}}H_j(X ;R)\to I^{\overline{p}\oplus \overline{q}}H_{j-i}(X ;R).\]
    \end{proposition}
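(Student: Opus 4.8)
The plan is to build the cap product at the level of a single regular filtered simplex, glue these local constructions simplex-wise, and then verify the two essential properties: compatibility with the differentials (so the pairing descends to homology) and compatibility with allowability (so that capping a $\overline{p}$-allowable cochain with a $\overline{q}$-intersection chain lands in $I^{\overline{p}\oplus\overline{q}}C_\bullet$). Since the blown-up cochain complex is defined simplex-wise and $I^{\overline{q}}C_\ast(X;R)$ is generated by filtered simplices, it is enough to define $\omega\cap\sigma$ for a blown-up cochain $\omega\in\widetilde{N}^\ast(\Delta)$ and a regular filtered simplex $\sigma:\Delta\to X$, and to check that this local rule is natural for the face maps so that it glues into a global operation $\widetilde{N}^i(X;R)\otimes C_j(X;R)\to C_{j-i}(X;R)$.

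First I would fix a regular simplex $\Delta=\Delta_0\ast\ldots\ast\Delta_n$ and construct the local cap product from the classical Alexander--Whitney front-face/back-face decomposition, adapted to the join decomposition. The subtlety is that a blown-up cochain lives in $N^\ast(c\Delta_0)\otimes\ldots\otimes N^\ast(c\Delta_{n-1})\otimes N^\ast(\Delta_n)$, so each factor coming from a singular level $\Delta_k$ with $k<n$ carries a cone coordinate. On such a factor one pairs the back face of the simplex against the cochain while the cone direction is contracted, i.e. restricted along the inclusion $\Delta_k\times\{1\}\hookrightarrow c\Delta_k$ already used to define the perverse degree and in the proof of Proposition \ref{prop:BE_alg_bup_cochains}. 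This produces a well-defined $(j-i)$-chain, and I would record the Leibniz rule $\partial(\omega\cap\sigma)=d\omega\cap\sigma\pm\omega\cap\partial\sigma$ (with the standard signs) by a direct computation, in complete analogy with the cup product of Proposition \ref{prop:cup_bup}.

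The hard part is the allowability estimate, which is purely a statement about perverse degrees and is local along each singular stratum. For a singular stratum $S$ of codimension $c$, I would show that every simplex occurring in $\omega\cap\sigma$ and meeting $S$ has its intersection with $S$ controlled by $\norm{\omega}_c\leq\overline{p}(c)$ together with the $\overline{q}$-allowability of $\sigma$; concretely, capping with a cochain of perverse degree $\overline{p}$ raises the intersection dimension with $S$ by at most $\overline{p}(c)$, so the resulting chain satisfies the Goresky--MacPherson condition for $\overline{p}+\overline{q}$, hence for $\overline{p}\oplus\overline{q}$ (the smallest GM-perversity dominating $\overline{p}+\overline{q}$). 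This is the same bookkeeping as in the proof of Proposition \ref{prop:BE_alg_bup_cochains}: decompose $\omega$ along the factors $N^\ast(c\Delta_{n-k})$, apply the restriction maps $\tilde\iota_{n-k}$, and bound the perverse degree of each tensor factor of $\omega\cap\sigma$ by the corresponding factor of $\omega$ and of $\sigma$. I expect this to be the main obstacle, because one must simultaneously track the front/back face splitting, which mixes the join factors, and the cone contractions, which govern the perverse degree, and check that no factor exceeds its allotted perversity.

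Finally, the Leibniz rule shows that if $\omega,d\omega$ are $\overline{p}$-allowable and $\xi,\partial\xi$ are $\overline{q}$-allowable, then $\omega\cap\xi$ and $\partial(\omega\cap\xi)$ are $(\overline{p}\oplus\overline{q})$-allowable, so the cap product restricts to a chain-level map $\widetilde{N}^i_{\overline{p}}(X;R)\otimes I^{\overline{q}}C_j(X;R)\to I^{\overline{p}\oplus\overline{q}}C_{j-i}(X;R)$. The same Leibniz rule guarantees that it respects cycles and boundaries on both sides, hence passes to the announced pairing $\mathscr{H}^i_{\overline{p}}(X;R)\otimes I^{\overline{q}}H_j(X;R)\to I^{\overline{p}\oplus\overline{q}}H_{j-i}(X;R)$ in homology.
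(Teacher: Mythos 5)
The paper does not prove this proposition: it is quoted verbatim from Chataur--Saralegui--Tanr\'e (\cite[Section 6]{CST18BUP-Alpine}), and the appendix only records the statement, so there is no internal proof to compare against. Judged on its own terms, your outline has the right architecture and matches the strategy of the cited reference --- a local construction on each regular filtered simplex via an Alexander--Whitney-type front/back splitting adapted to the join decomposition $\Delta_0\ast\ldots\ast\Delta_n$, a Leibniz rule $\partial(\omega\cap\xi)=d\omega\cap\xi\pm\omega\cap\partial\xi$, and a perverse-degree estimate of the form $\norm{\omega\cap\xi}\leq\norm{\omega}+\norm{\xi}$ that is then promoted from $\overline{p}+\overline{q}$ to $\overline{p}\oplus\overline{q}$.

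However, as written this is a plan rather than a proof, and the two places where it remains a plan are exactly the two places where the content lives. First, the local formula is never actually defined: saying that ``one pairs the back face of the simplex against the cochain while the cone direction is contracted'' does not pin down a chain map $\widetilde{N}^i(\Delta)\otimes N_j(\Delta)\to N_{j-i}(\Delta)$, and without an explicit formula neither the Leibniz rule nor the naturality under face maps (needed to glue the local operations into a global one on $C_\ast(X;R)$) can be checked; the signs and the interaction between the cone coordinates of the factors $N^\ast(c\Delta_k)$ and the front/back splitting are precisely what make this construction nontrivial. Second, the allowability estimate --- which you correctly identify as the main obstacle --- is asserted, not proved: the Goresky--MacPherson condition for a $(j-i)$-chain along a stratum of codimension $c$ reads $\dim\bigl((\omega\cap\xi)\cap S\bigr)\leq (j-i)-c+(\overline{p}\oplus\overline{q})(c)$, and deriving this from $\norm{\omega}_c\leq\overline{p}(c)$ and the $\overline{q}$-allowability of $\xi$ requires tracking, term by term in the front/back decomposition, which tensor factors of $\omega$ survive the restriction $\tilde\iota_{n-c}$ and how the degree lost to the cochain distributes across the join factors. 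Your final paragraph (Leibniz rule plus allowability of $\omega$, $d\omega$, $\xi$, $\partial\xi$ implies the map restricts to intersection chains and descends to homology) is correct logic, but it rests entirely on the two unestablished steps. To turn this into a proof you would need to write down the local cap product explicitly and carry out the perverse-degree bookkeeping, as is done in \cite[Section 6]{CST18BUP-Alpine}.
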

    One of the advantages of the blown-up cochain complex over the dual intersection cochain complex is that we are able to lift Poincaré duality to the (co)chain level. The following result is a consequence of \cite[Theorem 3.1]{ST20}. 
    \begin{thm}[\textbf{- Poincaré duality}]\label{thm:Poincaré}
    Let $X$ be an $n$-dimensional, second countable, compact and oriented pseudomanifold. Then, for every $\overline{p}\in \pGM$ there exists a quasi-isomorphism
    \[\widetilde N^\ast_{\overline{p}}(X;R) \xrightarrow[\simeq]{DP_X} I^{\overline{p}}C_{n-\ast}(X;R)\]
    where $DP_X$ is the cap product with a fundamental cycle $\zeta\in I^{\overline{0}}C_n(X;R)$.
    \end{thm}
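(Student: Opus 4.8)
The map $DP_X$ is the specialization of the intersection cap product of Proposition~\ref{prop:cap_bup} to the perversity $\overline{q}=\overline{0}$ and to a fundamental cycle $\zeta\in I^{\overline{0}}C_n(X;R)$, giving for each $i$
\[
-\cap\zeta:\widetilde N^i_{\overline{p}}(X;R)\longrightarrow I^{\overline{p}\oplus\overline{0}}C_{n-i}(X;R)=I^{\overline{p}}C_{n-i}(X;R).
\]
First I would record that this is well posed: since $X$ is compact and oriented, a fundamental cycle $\zeta$ exists and is unique up to boundary, so $DP_X$ is determined up to chain homotopy. Because $\partial\zeta=0$, the Leibniz rule for the intersection cap product from \cite{CST18BUP-Alpine} reduces to $\partial(\omega\cap\zeta)=\pm(d\omega)\cap\zeta$, which is exactly the chain-map condition for $DP_X$ with respect to the reindexing $C_{n-\ast}$. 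The plan is then to show that $DP_X$ is a quasi-isomorphism.

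That $DP_X$ is a quasi-isomorphism is the content of \cite[Theorem~3.1]{ST20}, and the work of the present theorem is to recognize our cap-product map as the morphism treated there and to check that the perversity index $\overline{p}$ and the shift $C_{n-\ast}$ match that reference's conventions. The underlying method is a Mayer--Vietoris induction over a cover of $X$ by distinguished open neighborhoods. Both functors satisfy Mayer--Vietoris (Proposition~\ref{prop:bup_prop} for $\widetilde N^\ast_{\overline{p}}$ and the analogous sequence for $I^{\overline{p}}C_\ast$), and the cap product is natural for restriction to open subsets, so capping with the restrictions of $\zeta$ intertwines the two long exact sequences; the five lemma then reduces the claim to the distinguished local models $\mathbb{R}^{k}\times\mathring{c}L$ with $L$ a link of strictly smaller formal dimension. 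These I would treat by induction on the dimension, with the regular part (ordinary Poincaré duality on a manifold chart) as the base case and the cone formulas of Proposition~\ref{prop:bup_prop} and of intersection homology as the inductive tool.

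The main obstacle is the local cone computation, not the induction scheme itself. On the non-compact pieces $\mathbb{R}^{k}\times\mathring{c}L$ the relevant fundamental chain is no longer a genuine cycle of finite intersection chains but a locally finite one, so the two cone formulas must be compared in their Borel--Moore forms, and one must verify that capping with the local fundamental chain genuinely intertwines the truncation of $\widetilde N^\ast_{\overline{p}}(\mathring{c}L)$ with the complementary truncation of $I^{\overline{p}}C_\ast(\mathring{c}L)$, signs included. This bookkeeping is precisely what \cite[Theorem~3.1]{ST20} carries out, and it is also what makes the cap-product formulation preferable to the field-duality formulation of Theorem~\ref{thm:field_intercoho_iso}, since the geometric cap product yields the statement over an arbitrary ring $R$. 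Accordingly, I would cite that theorem for the quasi-isomorphism and confine the proof here to identifying $DP_X$ with its map and to checking that, $X$ being compact, the locally finite and ordinary theories agree, so that the global statement takes the form displayed above.
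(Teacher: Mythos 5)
Your proposal matches the paper exactly: the paper gives no independent proof of Theorem~\ref{thm:Poincaré}, stating only that it is a consequence of \cite[Theorem 3.1]{ST20}, which is precisely the citation your argument rests on, and your preliminary identifications (the cap product of Proposition~\ref{prop:cap_bup} with $\overline{q}=\overline{0}$ and $\overline{p}\oplus\overline{0}=\overline{p}$, the chain-map property from $\partial\zeta=0$) are correct. Your added sketch of the Mayer--Vietoris induction and local cone comparison is a reasonable account of what the cited reference carries out, but it is supplementary to, not a departure from, the paper's route.
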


    \subsection{The category of perverse chain complexes}
    \subsubsection{Generalities on perverse objects}
    In \cite{Hov09}, Mark Hovey defines perverse modules and chain complexes for Goresky-MacPherson perversities. More generally, one can consider perverse objects in an abelian category. In the following, we fix $n\in \mathbb{N}$ and we consider the poset of GM $n$-perversities which we denoted by $\pGM$. It can be seen as a category where there is at most one morphism between any pair of perversities.
    
    \begin{defi}
    Let $n\in\mathbb{N}$ and  $\mathcal{C}$ be an abelian category. A \emph{perverse object} in $\mathcal{C}$ of \emph{rank} $n$ is a functor \[M:\pGM\to \mathcal{C}.\] The perverse objects in $\mathcal{C}$ of rank $n$ form a category where morphisms are natural transformations. We will denote it $\mathcal{C}^{\pGM}$. In what follows, the rank of an object will be omitted. 
    \end{defi}
    Alternatively, a perverse object is a family of objects $\{M_{\overline{p}}\}_{\overline{p}\in \pGM}$ in $\mathcal{C}$ such that for any pair of GM-perversities $\overline{p} \leq \overline{q}$ we have a morphism in $\mathcal{C}$
    \[\phi_{\overline{p}\leq \overline{q}}: M_{\overline{p}}\to M_{\overline{q}}\]
    which is the identity when $\overline{p}=\overline{q}$ and such that for any perversities $\overline{p}\leq \overline{q}\leq \overline{r}$, we have $\phi_{\overline{p}\leq \overline{r}}=\phi_{\overline{q}\leq \overline{r}}\circ \phi_{\overline{p}\leq \overline{q}}$. We refer to these morphisms as \emph{structure morphisms}.
    \begin{rem}
    Recall that the category of functors from a small category into an abelian category is also abelian \cite[Chapter IX - Prop 3.1]{MacL95} and that limits and colimits are taken point-wise in functor categories. 
    \end{rem}
    \begin{exemple}
    \begin{itemize}
        \item A \emph{perverse module} is a perverse object in the category of $R$-modules. The category of perverse $R$-modules is denoted $(\modu)^{\pGM}$.
        \item A \emph{perverse graded module} $M^\bullet$ is a family of $R$-modules $\{M_i^{\overline{p}}\}_{i\in\mathbb{Z},\, \overline{p}\in \pGM}$ such that for any $\overline{p} \leq \overline{q}$ and $i\in\mathbb{Z}$ we have an $R$-linear map
        \[M^{\overline{p}}_i\to M^{\overline{q}}_i\]
        which is the identity when $\overline{p}=\overline{q}$.\\
        An element $m\in M^{\overline{p}}_i$ is of degree $\Real{m}=i$ and perverse degree $\overline{p}$.
        \item A \emph{perverse chain complex} is a perverse object in the category of chain complexes $Ch(R)$. Equivalently, a perverse chain complex is a chain complex of perverse modules. The category of perverse chain complexes is denoted $(Ch(R))^{\pGM}$.\\
        The degree $i\in \mathbb{Z}$ and perverse degree $\overline{p}\in \pGM$ component of a perverse chain complex $(Z^\bullet, d^\bullet)$ is denoted $Z^{\overline{p}}_i$. Following the convention given at the beginning of this paper, for $i\in \mathbb{Z}$, we set
        \[Z^i_{\overline{p}}:=Z_{-i}^{\overline{p}}.\]
        This defines a \emph{perverse cochain complex} $Z_\bullet$ whose $\overline{p}$-component is the cochain complex $Z^\ast_{\overline{p}}$.
    \end{itemize}
    \end{exemple}
    
    We collect results from \cite{Hov09}. The original statements are given for perverse $R$-modules but one can easily adapt the proofs to perverse objects.   
    \begin{proposition}
    Let $\overline{p}\in\pGM$, there is an exact evaluation functor 
    \[\begin{array}{rrcl}
    Ev_{\overline{p}}: & \mathcal{C}^{\pGM} & \to & \mathcal{C} \\
    ~& M & \mapsto & M_{\overline{p}}. \\
    \end{array}\]   
    The functor $Ev_{\overline{p}}$ possesses a left adjoint $F_{\overline{p}}$ defined for all $N\in \mathcal{C}$ by
    \[F_{\overline{p}}(N)_{\overline{q}}=\begin{cases}
        N & \text{ if } \overline{p}\leq \overline{q}. \\
        0 & \text{ else.} \\
    \end{cases}\]
    \end{proposition}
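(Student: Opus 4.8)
The plan is to verify the three assertions in turn, throughout regarding a perverse object as a functor $M\colon\pGM\to\mathcal{C}$ and a morphism as a natural transformation. Functoriality of $Ev_{\overline{p}}$ is immediate: a natural transformation $\alpha\colon M\to M'$ has a component $\alpha_{\overline{p}}\colon M_{\overline{p}}\to M'_{\overline{p}}$, and $\alpha\mapsto\alpha_{\overline{p}}$ visibly preserves identities and composites. For exactness I would invoke the fact recalled just before the statement, namely that $\mathcal{C}^{\pGM}$ is abelian with kernels, cokernels and all (co)limits computed pointwise. Hence for a short exact sequence $0\to M'\to M\to M''\to 0$ in $\mathcal{C}^{\pGM}$ the evaluated sequence $0\to M'_{\overline{p}}\to M_{\overline{p}}\to M''_{\overline{p}}\to 0$ reproduces the pointwise kernel--cokernel data and so is exact in $\mathcal{C}$; thus $Ev_{\overline{p}}$ is exact.

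The substance of the proposition is the adjunction, and the first task is to check that the prescribed data $F_{\overline{p}}(N)$ is genuinely a functor on $\pGM$. For $\overline{q}\leq\overline{r}$ the structure morphism $F_{\overline{p}}(N)_{\overline{q}}\to F_{\overline{p}}(N)_{\overline{r}}$ is forced: it is $\id_N$ when $\overline{p}\leq\overline{q}$ (note that then $\overline{p}\leq\overline{q}\leq\overline{r}$ gives $\overline{p}\leq\overline{r}$ as well), and otherwise it is the unique morphism out of the zero object. Since in every case the candidate maps are canonical, the composition law $\phi_{\overline{q}\leq\overline{s}}=\phi_{\overline{r}\leq\overline{s}}\circ\phi_{\overline{q}\leq\overline{r}}$ for $\overline{q}\leq\overline{r}\leq\overline{s}$ holds automatically, so $F_{\overline{p}}(N)$ is a well-defined perverse object, evidently functorial in $N$.

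Then I would establish a bijection, natural in $N$ and $M$,
\[\Hom_{\mathcal{C}^{\pGM}}(F_{\overline{p}}(N),M)\;\cong\;\Hom_{\mathcal{C}}(N,M_{\overline{p}})\]
by sending a natural transformation $\psi$ to its component $\psi_{\overline{p}}$. The key point is that $\psi$ is completely determined by $\psi_{\overline{p}}$: for $\overline{p}\not\leq\overline{q}$ the source $F_{\overline{p}}(N)_{\overline{q}}=0$ forces $\psi_{\overline{q}}=0$, whereas for $\overline{p}\leq\overline{q}$ naturality against $\phi_{\overline{p}\leq\overline{q}}$ forces $\psi_{\overline{q}}=\phi_{\overline{p}\leq\overline{q}}\circ\psi_{\overline{p}}$, where $\phi_{\overline{p}\leq\overline{q}}$ now denotes the structure morphism of $M$. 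Conversely, given any $f\colon N\to M_{\overline{p}}$ in $\mathcal{C}$, this same formula defines a family $\{\psi_{\overline{q}}\}$ whose naturality reduces exactly to the composition law for the structure morphisms of $M$. These two assignments are mutually inverse, and naturality of the bijection in $N$ and $M$ is a routine diagram check; hence $F_{\overline{p}}\dashv Ev_{\overline{p}}$.

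The only genuine obstacle lies in this adjunction step, where one must keep careful track of which perversities satisfy $\overline{p}\leq\overline{q}$ and verify that the forced extension of $f$ is natural. Conceptually this is the statement that $F_{\overline{p}}$ is the left Kan extension of $N$ along the inclusion of the one-object poset $\{\overline{p}\}$ into $\pGM$: the defining colimit at $\overline{q}$ runs over the morphisms $\overline{p}\to\overline{q}$ in $\pGM$, which form a singleton when $\overline{p}\leq\overline{q}$ and are empty otherwise, producing precisely $N$ respectively the zero object. I would mention this as the reason the stated formula is the natural one, but carry out the short verification by hand rather than invoking the general Kan extension machinery.
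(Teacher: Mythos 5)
Your argument is correct. The paper itself gives no proof of this proposition: it only states that the result is collected from Hovey's article and that the proofs there (written for perverse $R$-modules) adapt to perverse objects in an arbitrary abelian category. Your direct verification --- pointwise exactness of evaluation in the functor category, the forced structure morphisms of $F_{\overline{p}}(N)$, and the adjunction bijection $\Hom_{\mathcal{C}^{\pGM}}(F_{\overline{p}}(N),M)\cong\Hom_{\mathcal{C}}(N,M_{\overline{p}})$ obtained by restricting to the component at $\overline{p}$ --- is exactly the standard argument being alluded to, and the Kan-extension remark correctly explains why the formula for $F_{\overline{p}}$ is the only possible one.
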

    %Notice that $F_{\overline{0}}(N)$ is a constant diagram. 
    \begin{proposition}
        If $(\mathcal{C}, \otimes_\mathcal{C}, [~,~], \mathcal{I})$ is a closed symmetric monoidal category then so is the category of perverse object $\mathcal{C}^{\pGM}$.
        The monoidal structure is given by:
        \[(M \boxtimes N)_{\overline{r}}:=\colim_{\overline{p}+\overline{q}\leq \overline{r}}M_{\overline{p}}\otimes_\mathcal{C} N_{\overline{q}}\]
        with unit $F_{\overline{0}}(\mathcal{I})$. The closed structure comes from
        \[\Hom_{\mathcal{C}^{\pGM}}(M,N)_{\overline{r}}:=\lim_{\overline{r}\leq \overline{q}-\overline{p}} [M_{\overline{p}},N_{\overline{q}}].\]
    \end{proposition}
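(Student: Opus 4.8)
The plan is to recognize the two formulas as an instance of Day convolution along the (partial) symmetric monoidal poset $(\pGM,+,\overline{0})$ and to verify the closed symmetric monoidal axioms directly, which is feasible because $\pGM$ is a \emph{finite} poset. I would first record the two structural facts that drive the argument: since $\mathcal{C}$ is abelian and every indexing set $\{(\overline{p},\overline{q}) : \overline{p}+\overline{q}\leq\overline{r}\}$ and $\{(\overline{p},\overline{q}) : \overline{r}\leq\overline{q}-\overline{p}\}$ is finite, all the colimits and limits in the statement exist; and since $\mathcal{C}$ is closed, each functor $X\otimes_\mathcal{C}-$ preserves colimits, having the right adjoint $[X,-]$. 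Next I would check that $\boxtimes$ and $\Hom_{\mathcal{C}^{\pGM}}$ land in $\mathcal{C}^{\pGM}$: when $\overline{r}\leq\overline{r}'$ the inclusion of indexing sets $\{\overline{p}+\overline{q}\leq\overline{r}\}\subseteq\{\overline{p}+\overline{q}\leq\overline{r}'\}$ induces the structure morphism on colimits $(M\boxtimes N)_{\overline{r}}\to(M\boxtimes N)_{\overline{r}'}$, and dually the inclusion $\{\overline{r}'\leq\overline{q}-\overline{p}\}\subseteq\{\overline{r}\leq\overline{q}-\overline{p}\}$ induces the restriction $\Hom_{\mathcal{C}^{\pGM}}(M,N)_{\overline{r}}\to\Hom_{\mathcal{C}^{\pGM}}(M,N)_{\overline{r}'}$ on limits.

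Symmetry is then immediate: the swap $(\overline{p},\overline{q})\mapsto(\overline{q},\overline{p})$ is an automorphism of each indexing set (as $+$ is commutative) and is compatible with the symmetry constraint of $\otimes_\mathcal{C}$, yielding $M\boxtimes N\cong N\boxtimes M$. For associativity I would expand both $(M\boxtimes N)\boxtimes L$ and $M\boxtimes(N\boxtimes L)$ and identify each with the single colimit $\colim_{\overline{p}+\overline{q}+\overline{s}\leq\overline{r}}M_{\overline{p}}\otimes_\mathcal{C} N_{\overline{q}}\otimes_\mathcal{C} L_{\overline{s}}$: the preservation of colimits by $\otimes_\mathcal{C}$ in each slot lets me pull the inner colimit out through the tensor, a finality argument rewrites the iterated colimit $\colim_{\overline{u}+\overline{s}\leq\overline{r}}\colim_{\overline{p}+\overline{q}\leq\overline{u}}$ as the colimit over triples, and the associator of $\mathcal{C}$ supplies the comparison. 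The unit isomorphism is the other finality computation: since $F_{\overline{0}}(\mathcal{I})_{\overline{p}}=\mathcal{I}$ for every $\overline{p}$ (because $\overline{0}$ is minimal), one has $(F_{\overline{0}}(\mathcal{I})\boxtimes M)_{\overline{r}}\cong\colim_{\overline{p}+\overline{q}\leq\overline{r}}M_{\overline{q}}$, and the projection $(\overline{p},\overline{q})\mapsto\overline{q}$ onto $\{\overline{q}\leq\overline{r}\}$ is final with $\overline{r}$ terminal, so this colimit is $M_{\overline{r}}$. All coherence diagrams (pentagon, triangle, hexagon) commute because they are glued termwise from the corresponding coherences of $\mathcal{C}$.

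For the closed structure I would prove the tensor--hom adjunction at the level of morphism sets. By the universal property of the defining colimit and naturality in $\overline{r}$, a morphism $M\boxtimes N\to L$ in $\mathcal{C}^{\pGM}$ is the same datum as a family of maps $M_{\overline{p}}\otimes_\mathcal{C} N_{\overline{q}}\to L_{\overline{s}}$, one for each $\overline{p}+\overline{q}\leq\overline{s}$, compatible with all structure morphisms. Transposing each of these across the adjunction $-\otimes_\mathcal{C} N_{\overline{q}}\dashv[N_{\overline{q}},-]$ in $\mathcal{C}$ gives a compatible family $M_{\overline{p}}\to[N_{\overline{q}},L_{\overline{s}}]$, which by the universal property of $\Hom_{\mathcal{C}^{\pGM}}(N,L)_{\overline{p}}=\lim_{\overline{p}+\overline{q}\leq\overline{s}}[N_{\overline{q}},L_{\overline{s}}]$ is exactly a morphism $M\to\Hom_{\mathcal{C}^{\pGM}}(N,L)$. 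Verifying that this bijection is natural in $M$, $N$ and $L$ promotes it to the adjunction $-\boxtimes N\dashv\Hom_{\mathcal{C}^{\pGM}}(N,-)$, and together with symmetry this exhibits $\mathcal{C}^{\pGM}$ as closed.

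I expect the associativity step to be the main obstacle, precisely because $(\pGM,+)$ is only a \emph{partial} monoid: the pointwise sum $\overline{p}+\overline{q}$ generally leaves $\pGM$, so one can never form an internal product and must everywhere test sums against a genuine GM-perversity $\overline{r}$. Keeping the finality reindexing of the nested colimits honest under this constraint is where the care is needed; once the reindexing is correctly set up, the coherence verifications reduce mechanically to those in $\mathcal{C}$.
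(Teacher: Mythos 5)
Your proposal is correct and takes essentially the same route as the paper, which gives no proof of its own but defers to Hovey's argument for perverse $R$-modules: Day convolution along the partial symmetric monoidal poset $(\pGM,\oplus)$, with the tensor--hom adjunction obtained by transposing the defining colimit cocones termwise exactly as you describe. The one step you flag but do not carry out --- the cofinality needed for associativity, namely that $\overline{p}+\overline{q}+\overline{s}\leq\overline{r}$ forces the existence of a GM-perversity $\overline{u}$ with $\overline{p}+\overline{q}\leq\overline{u}$ and $\overline{u}+\overline{s}\leq\overline{r}$ --- does hold: one checks that $(\overline{p}\oplus\overline{q})(i)=\max_{j\geq i}\bigl(\overline{p}(j)+\overline{q}(j)-(j-i)\bigr)$, and since $\overline{r}(j)-\overline{r}(i)\leq j-i$ and $\overline{s}$ is non-decreasing, $\overline{u}=\overline{p}\oplus\overline{q}$ works.
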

    \begin{rem}
        Using adjunctions, we can also think of the closed structure as morphisms in the category of perverse objects (i.e. natural transformations)
        \[\Hom_{\mathcal{C}^{\pGM}}(M,N)_{\overline{r}}\simeq \mathcal{C}^{\pGM}(F_{\overline{r}}(\mathcal{I})\boxtimes M, N).\]
    \end{rem}
    
    \begin{rem}\label{rem:internal_tenhom_adj}
        Since $\mathcal{C}^{\pGM}$ is a symmetric monoidal category, for any perverse objects $X_\bullet, Y_\bullet$ and $Z_\bullet$ we have an isomorphism
        \[\Hom_{\mathcal{C}^{\pGM}}(X\boxtimes Y , Z)_\bullet \simeq \Hom_{\mathcal{C}^{\pGM}}(X , \Hom_{\mathcal{C}^{\pGM}}(Y,Z))_\bullet.\]
        We'll refer to this isomorphism as \emph{internal tensor-hom adjunction}.
    \end{rem}
    
    \begin{exemple}\label{ex:pmonoidal_chcplx}
    Let $\overline{r}$ be a perversity and $Z^\bullet,~Y^\bullet$ two perverse chain complexes. 
    The monoidal structure on $(Ch(R))^{\pGM}$ is given by 
    \[(Z \boxtimes Y)^{\overline{r}}:=\colim_{\overline{p}+\overline{q}\leq \overline{r}}Z^{\overline{p}}\otimes Y^{\overline{q}}\]
    where the tensor product on the right hand side denotes the tensor product of chain complexes.
    For $k\in \mathbb{Z}$, the degree $k$ and perverse degree $\overline{r}$ component of the perverse chain complex is
    \[(Z \boxtimes Y)_k^{\overline{r}}=\colim_{\overline{p}+\overline{q}\leq \overline{r}}\bigoplus_{i+j=k}Z_i^{\overline{p}}\otimes_R Y_j^{\overline{q}}.\]
    The unit is $F_{\overline{0}}(\mathbb{S}^0)^\bullet$.
    
    The internal Hom from $Z^\bullet$ to $Y^\bullet$ is the perverse chain complex whose degree $k$ and perverse degree $\overline{r}$ component is
    \[\Hom_{(Ch(R))^{\pGM}}(Z,Y)_k^{\overline{r}}=\lim_{\overline{r}\leq \overline{q}-\overline{p}} \prod_{j-i=k} \Hom_R(Z_i^{\overline{p}}, Y_j^{\overline{q}}).\]
    In what follows, we will be interested in the linear dual $DZ^\bullet$ of a perverse chain complex. We have:
    \[DZ_k^{\overline{r}}:=\Hom_{(Ch(R))^{\pGM}}(Z, F_{\overline{0}}(\mathbb{S}^0))_k^{\overline{r}}=\Hom_R(Z_{-k}^{\overline{t}-\overline{r}}, R).\]
    \end{exemple}
    \subsubsection{Perverse differential graded algebra}\label{sect:pDGA}
    We are now going to present a perverse analogue of differential graded algebras (DGA). Recall that a DGA is a monoid in the monoidal category $Ch(R)$, this justifies the following definition.
    \begin{defi}
        A \emph{perverse differential graded algebra} (pDGA) is a monoid in the category of perverse chain complexes $(Ch(R))^{\pGM}$.
    \end{defi}
    An equivalent explicit description is given in the next definition.
    \begin{defi}
        A \emph{perverse differential graded $R$-algebra} $A^\bullet$ is a perverse chain complex $(A^\bullet, d^\bullet)$ 
        equipped for every $i,j\in\mathbb{Z}$ and $\overline{p},\overline{q}\in \pGM$ (such that $\overline{p}+\overline{q}\leq \overline{t}$) with an associative product $\mu: A_i^{\overline{p}} \otimes A_j^{\overline{q}} \to A_{i+j}^{\overline{p}\oplus\overline{q}}$
        compatible with the poset structure of $\pGM$ i.e. it makes the following diagram commute
        \[\begin{tikzcd}
        A^{\overline{p_1}}_i\otimes A^{\overline{q_1}}_j \arrow[r, "\mu"] \arrow[d, "\phi_{\overline{p_1}\leq\overline{p_2}}\otimes \phi_{\overline{q_1}\leq\overline{q_2}}"'] & A^{\overline{p_1}\oplus\overline{q_1}}_{i+j} \arrow[d, "\phi_{\overline{p_1}\oplus\overline{q_1}\leq\overline{p_2}\oplus\overline{q_2}}"] \\
        A^{\overline{p_2}}_i\otimes A^{\overline{q_2}}_j \arrow[r, "\mu"]                                                                                                      & A^{\overline{p_2}\oplus\overline{q_2}}_{i+j}  \end{tikzcd}\]
        where $\overline{p_1}\leq \overline{p_2}$ and $\overline{q_1}\leq \overline{q_2}$.
        We will denote $\mu(a \otimes b)$ by $ab$. The multiplication has a unit $1\in A^{\overline{0}}_0$. Furthermore, the differential $d^\bullet$ is a derivation with respect to this product i.e.
        \[d_{i+j}^{\overline{p}\oplus \overline{q}}(ab)=(d_i^{\overline{p}}a)b+(-1)^ia(d_j^{\overline{q}}b) \text{ for } a\in A_i^{\overline{p}}\text{ and }b\in A_j^{\overline{q}}.\]
        We will denote by $\pdgAlg$ the category of perverse differential graded algebras whose morphisms are natural transformations between perverse chain complexes that are compatible with the products.\\
        We say that $(A^\bullet, d^\bullet )$ is a \emph{perverse commutative differential graded $R$-algebra} (pCDGA) if for every $\overline{p},\overline{q}\in \pGM$ and $i,j\in\mathbb{Z}$, the following diagram commutes
        \[\begin{tikzcd}
        A^{\overline{p}}_i\otimes A^{\overline{q}}_j \arrow[rr, "\tau"'] \arrow[rd, "\mu"'] &                                     & A^{\overline{q}}_j\otimes A^{\overline{p}}_i \arrow[ld, "\mu"] \\
                                                                                    & A^{\overline{p}\oplus\overline{q}}_{i+j}. &                                                               
        \end{tikzcd}\]
        where $\tau$ is the usual twisting isomorphism for chain complexes
        \[\begin{array}{rrcl}
        \tau: & A_\ast \otimes B_\ast & \to & B_\ast\otimes A_\ast \\
        ~& a\otimes b & \mapsto & (-1)^{\Real{a}\Real{b}}b\otimes a. \\
        \end{array}\]    
    \end{defi}
    \begin{rem}
        Note that having a perverse graded algebra is equivalent to asking for chain maps, $\mu: (A\boxtimes A)^{\overline{r}}\to A^{\overline{r}}$, for every $\overline{r}\in\pGM$, that are compatible with the poset structure of $\pGM$ and verify the associativity property.
    \end{rem}
    \begin{rem}
        To be precise, we have defined a perverse differential \emph{lower} graded $R$-algebra since we consider chain complexes. Similarly, we can define a perverse differential \emph{upper} graded $R$-algebra when we work on cochain complexes.
    \end{rem}
    \begin{exemple}
        Let $X$ be a filtered space. The blow-up of Sullivan's polynomial forms $\widetilde{A}^{\ast}_{PL,\overline{p}}(X)$ is a pCDGA, see \cite[Section 2.1]{CST18ration}.
    \end{exemple}
    \begin{exemple}\label{ex:tensoralg}
        Let $M^\bullet$ be a perverse graded module. The \emph{tensor algebra} of $M^\bullet$ is the perverse differential graded algebra \[(TM)^\bullet:= \bigoplus_{k\geq 0} (T^k M)^\bullet\]
        with $(T^0 M)^\bullet = F_{\overline{0}}(\mathbb{S}^0)^\bullet$ and $(T^k M)^\bullet=(M^{\boxtimes k})^\bullet$ for $k\geq 1$. 
        
        The degree of an element $m=m_1\otimes m_2 \otimes \ldots \otimes m_k$ in $(T^k M)^\bullet_\ast$ is $\Real{m}=\sum_{i=1}^k \Real{m_i}$ and its differential is given by
        $d(m_1\otimes \ldots \otimes m_k)=\sum_{i=1}^k (-1)^{\eps_i}m_1\otimes \ldots d(m_i)\ldots \otimes m_k$
        where $\eps_i=\sum_{j<i} \Real{m_j}$. 
        
        For $\overline{p},\overline{q}\in \pGM$ and $i,j\in\mathbb{Z}$, the multiplication 
        \[\mu: (TM)_i^{\overline{p}} \otimes (TM)_j^{\overline{q}} \to (TM)_{i+j}^{\overline{p}\oplus\overline{q}}\] is given by the tensor product.
    \end{exemple}
    By a similar process, we adapt the notion of differential graded $A$-module to the context of perverse objects.
    \begin{defi}
        Let $(A^\bullet, d_A^\bullet)$ be a perverse DGA. A left \emph{perverse differential graded $(A^\bullet, d^\bullet_A)$-module} (pDG module) is a perverse chain complex of $R$-modules $(M^\bullet, d^\bullet_M)$ with $R$-linear maps for every $\overline{p}, \overline{q}\in \pGM$ (such that $\overline{p}+\overline{q}\leq \overline{t}$) and $i, j\in \mathbb{Z}$
        \[\begin{array}{rcl}
         A^{\overline{p}}_i\otimes M^{\overline{q}}_j    & \to & M^{\overline{p}\oplus \overline{q}}_{i+j} \\
           a \otimes m  & \mapsto & a.m
        \end{array} \]
        which verify 
        \[d_M(a.m)=d_A(a).m+(-1)^{\Real{a}}a.d_M(m).\] 
        The category of perverse differential graded $A^\bullet$-modules is denoted $\pMod(A)$.
   
    \end{defi}
    \begin{rem}
    \leavevmode
        \begin{itemize}
        \item Analogously, one defines \emph{right perverse differential graded $(A^\bullet, d^\bullet_A)$-module}.
        \item The \emph{enveloping algebra} of a pDGA $(A^\bullet, d_A^\bullet)$ is given by $((A^e)^\bullet:=(A\boxtimes A^{op})^\bullet, d^\bullet_A \otimes 1 +1 \otimes d^\bullet_A)$ where $(A^{op})^\bullet$ is equipped with the opposite multiplication of $A^\bullet$, i.e.\ $\mu^{op}(a\otimes b)=(-1)^{\Real{a}.\Real{b}}\mu(b\otimes a)$. A \emph{perverse differential graded $(A^\bullet, d^\bullet_A)$-bimodule} is a left perverse differential graded $(A^e)^\bullet$-module. It is a perverse chain complex of $R$-modules $(M^\bullet, d^\bullet_M)$ with left and right pDG $A^\bullet$-module structures such that
        \[d_M(a.m.b)=d_A(a).m.b+(-1)^{\Real{a}}a.d_M(m).b+(-1)^{\Real{a}+\Real{m}}a.m.d_B(b)\]
        for any $a,b\in A^\bullet$ and $m\in M^\bullet.$
        We denote by $\pMod((A^e)^\bullet)$ the category of perverse differential $A^\bullet$-bimodules.
        \end{itemize}    
    \end{rem}
    The category $\pMod(A)$ has an internal Hom functor which we define below.
    \begin{defi}
    Let $A^\bullet$ be a pDGA and consider two pDG $A^\bullet$-modules $M^\bullet, P^\bullet$. The perverse chain complex $\Hom_A(M,P)^\bullet$ is the perverse subcomplex of the internal Hom, $\Hom_{(Ch(R))^{\pGM}}(M, P)^\bullet$, made of maps that commute with the action of $A^\bullet$. In other words, a map $f : M^\bullet \to P^\bullet$ of degree $\Real{f}$ of $\Hom_{(Ch(R))^{\pGM}}(M, P)^\bullet$ is in $\Hom_A(M,P)^\bullet$, if 
    \[f(a.m) = (-1)^{\Real{f}\Real{a}}a.f(m)~ \forall a \in A^\bullet,\, m\in M^\bullet.\]
    Similarly, we define $(M \boxtimes_A P)^\bullet$ the tensor product over $A^\bullet$ when $M^\bullet$ (resp. $P^\bullet$) is a right (resp. left) pDG $A^\bullet$-module. It is generated by the simple tensors $m\otimes p$ of $M\boxtimes P$ such that
    \[m.a\otimes p = m\otimes a.p ~\forall a \in A^\bullet.\]
    In other words, $(M\boxtimes_A P)^\bullet$ is the coequalizer of $f,g:(M\boxtimes A \boxtimes P)^\bullet \to (M\boxtimes P)^\bullet$ where $f(m\otimes a \otimes p)=m.a\otimes p$ and $g(m\otimes a \otimes p)=m\otimes a.p$. 
    \end{defi}

    \subsubsection{Properties of the blown-up cochain complex}
    Using the formalism we have exposed so far, we restate the results given at the end of subsection \ref{subsect:bupinter} concerning algebraic structures found on the blown-up intersection complex.
    \begin{proposition}
        Let $X$ be a stratified space. The blown-up intersection complex $(\widetilde{N}_\bullet^\ast(X; R), \cup)$ is a perverse differential (upper) graded algebra.\\
        The intersection chain complex $I^\bullet C_\ast(X;R)$ is a right perverse differential graded $\widetilde{N}_\bullet^\ast(X; R)$-module for the cap product. 
    \end{proposition}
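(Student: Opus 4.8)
The plan is to verify that the cochain-level constructions recalled in Propositions~\ref{prop:cup_bup} and~\ref{prop:cap_bup} assemble into the perverse algebraic structures introduced in subsection~\ref{sect:pDGA}. Since the existence, bidegree and associativity of the cup and cap products are already granted by \cite{CST18BUP-Alpine}, the work reduces to three points: exhibiting the perverse (co)chain complexes together with their structure morphisms, checking that the products commute with these structure morphisms, and recording the unit and the Leibniz rules.

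First I would note that $\widetilde N^\ast_\bullet(X;R)$ is a perverse cochain complex. By Proposition~\ref{prop:bup_prop} each $\widetilde N^\ast_{\overline p}(X;R)$ is a cochain complex, and for $\overline p\leq\overline q$ the inclusion $\widetilde N^\ast_{\overline p}(X;R)\hookrightarrow\widetilde N^\ast_{\overline q}(X;R)$ --- valid because every $\overline p$-intersection cochain is a fortiori a $\overline q$-intersection cochain --- is a chain map and provides the structure morphism $\phi_{\overline p\leq\overline q}$; these inclusions satisfy the composition law, so we obtain a functor $\pGM\to Ch(R)$. Proposition~\ref{prop:cup_bup} then supplies, for all $\overline p,\overline q$ with $\overline p+\overline q\leq\overline t$, an associative product $\cup\colon\widetilde N^i_{\overline p}(X;R)\otimes\widetilde N^j_{\overline q}(X;R)\to\widetilde N^{i+j}_{\overline p\oplus\overline q}(X;R)$ of the required bidegree. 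Compatibility with the poset structure --- the commuting square in the definition of a pDGA --- holds because the cup product is defined on the ambient complex $\widetilde N^\ast(X;R)$ before any allowability condition is imposed, so enlarging the perversities of the inputs and of the output along the inclusions does not alter the resulting cochain. The unit is the augmentation cocycle $\mathbf 1\in\widetilde N^0_{\overline 0}(X;R)$ which assigns to every regular simplex $\sigma\colon\Delta_0\ast\cdots\ast\Delta_n\to X$ the element $1_{c\Delta_0}\otimes\cdots\otimes 1_{\Delta_n}\in\widetilde N^0(\Delta)$; it lies in perverse degree $\overline 0$ and is a two-sided unit for $\cup$. Finally, the differential is a derivation for $\cup$ by the cochain-level Leibniz rule that accompanies Proposition~\ref{prop:cup_bup}. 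This establishes the first assertion.

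For the module structure I would argue in the same spirit. The family $\{I^{\overline p}C_\ast(X;R)\}_{\overline p\in\pGM}$ forms a perverse chain complex, with structure morphisms the inclusions $I^{\overline p}C_\ast(X;R)\hookrightarrow I^{\overline q}C_\ast(X;R)$ for $\overline p\leq\overline q$. Reading the cap product of Proposition~\ref{prop:cap_bup} as a right action $c\cdot\alpha$ obtained from $\alpha\cap c$ up to the appropriate Koszul sign, it carries $I^{\overline q}C_j(X;R)\otimes\widetilde N^i_{\overline p}(X;R)$ to $I^{\overline p\oplus\overline q}C_{j-i}(X;R)$. The right-module associativity axiom $(c\cdot\alpha)\cdot\beta=c\cdot(\alpha\cup\beta)$ then reduces to the standard mixed identity relating cup and cap products, whose precise order and sign are fixed by the conventions of \cite{CST18BUP-Alpine}; compatibility with the structure morphisms again follows from the fact that both products are defined before allowability is imposed; and the cochain-level Leibniz rule for the cap product yields the derivation condition $d_M(a.m)=d_A(a).m+(-1)^{\Real{a}}a.d_M(m)$. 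Hence $I^\bullet C_\ast(X;R)$ is a right perverse differential graded $\widetilde N^\ast_\bullet(X;R)$-module.

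The genuinely new content, relative to citing \cite{CST18BUP-Alpine} directly, is the verification that the cup and cap products commute with the structure morphisms and hence descend to morphisms in the perverse (co)chain category; this is where I expect the only real care to be needed, since one must check that the perversity of a product is controlled uniformly by $\oplus$, independently of chosen representatives, and that the signs in passing from the cap product to a right action remain consistent with the Leibniz rule. The associativity, unitality and Leibniz identities themselves are inherited verbatim from the cochain-level statements already recalled.
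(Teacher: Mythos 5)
Your proposal is correct and matches the paper's (implicit) argument: the paper gives no proof at all, presenting this proposition as a direct restatement of Propositions \ref{prop:cup_bup} and \ref{prop:cap_bup} in the perverse formalism of subsection \ref{sect:pDGA}. Your verification — that the structure morphisms are the inclusions of allowable subcomplexes, that the cup and cap products commute with them because they are defined on the ambient complexes before allowability is imposed, and that unitality, associativity and the Leibniz rules are inherited from \cite{CST18BUP-Alpine} — is exactly the routine check the paper leaves to the reader.
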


    The next result follows from Poincaré duality (Theorem \ref{thm:Poincaré}). 
    \begin{proposition}
    %true for $R$ commutative ring 
    Let $X$ be an $n$-dimensional, second countable, compact and oriented pseudomanifold. Then, there exists a quasi-isomorphism of perverse (right) $\widetilde N^\ast_{\overline{\bullet}}(X;R)$-modules:
    \[\widetilde N^\ast_{\bullet}(X;R) \to \Hom(I_{\bullet}C^{n-\ast}(X;R), R).\]
    It is obtained as the composite of
    \[\widetilde N^\ast_{\bullet}(X;R) \xrightarrow[\simeq]{DP_X} I^{\bullet}C_{n-\ast}(X;R) \xrightarrow[\simeq]{Bid} \Hom(I_{\bullet}C^{n-\ast}(X;R), R)\]
    where
    \begin{itemize}
        \item $DP_X$ is the cap product with a fundamental cycle
        \item and $Bid$ is the injection in the bidual given in \emph{\cite[Proposition A]{CST20Deligne}}.
    %Notice that $C_n^{\overline{p}}(X)$ is a finite dimensional $\mathbb{Q}$ vector space (the chain complex $C_{\ast}^{\overline{p}}(X)$ is \emph{of finite type}), hence the inclusion in the bidual is an isomorphism.     
    \end{itemize}
    \end{proposition}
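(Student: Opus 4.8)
The plan is to produce the map as a composite of two quasi-isomorphisms, each compatible with the right module structures, and then invoke the fact that a composite of module quasi-isomorphisms is again one. Write $A_\bullet:=\widetilde N^\ast_\bullet(X;R)$ and view it as a right module over itself via the intersection cup product of Proposition \ref{prop:cup_bup}. The intermediate object $I^\bullet C_{n-\ast}(X;R)$ is a right $A_\bullet$-module via the intersection cap product of Proposition \ref{prop:cap_bup}, and $Bid$ is the bidual injection. Thus the only points to check are that $DP_X$ is a quasi-isomorphism of right $A_\bullet$-modules and that $Bid$ is one as well.

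First I would handle $DP_X=-\cap\zeta$. Since $X$ is $n$-dimensional, second countable, compact and oriented, a fundamental cycle $\zeta\in I^{\overline 0}C_n(X;R)$ exists and Theorem \ref{thm:Poincaré} gives that $-\cap\zeta\colon\widetilde N^\ast_{\overline p}(X;R)\to I^{\overline p}C_{n-\ast}(X;R)$ is a quasi-isomorphism for every $\overline p\in\pGM$. By Proposition \ref{prop:cap_bup}, capping with $\zeta\in I^{\overline 0}C_n$ sends the perversity-$\overline p$ summand to the perversity-$\overline 0\oplus\overline p=\overline p$ summand, so $DP_X$ is a morphism of perverse objects commuting with the structure maps of $\pGM$. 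Its module-linearity rests on the associativity relation between the two products,
\[(\alpha\cup\beta)\cap\zeta=\alpha\cap(\beta\cap\zeta),\]
established for the blown-up complex in \cite{CST18BUP-Alpine}: combined with the graded commutativity of $\cup$, this identifies $DP_X(a\cdot r)$ with $DP_X(a)\cdot r$ up to the Koszul sign dictated by the opposite-algebra convention built into the cap action.

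Second I would treat $Bid$. By \cite[Proposition A]{CST20Deligne} the injection into the bidual $I^\bullet C_{n-\ast}(X;R)\to\Hom(I_\bullet C^{n-\ast}(X;R),R)$ is a quasi-isomorphism; because the bidual map is natural and the cap action is defined degreewise, $Bid$ intertwines the right $A_\bullet$-actions. Composing $Bid\circ DP_X$ then yields the asserted quasi-isomorphism of right $A_\bullet$-modules.

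The step I expect to be the real work is bookkeeping rather than conceptual. One must pin down the Koszul signs and the opposite-algebra convention that make $-\cap\zeta$ strictly right $A_\bullet$-linear at the chain level — recall that $\cup$ is only homotopy-commutative, so the naive right action on $A_\bullet$ must be compared with the action on chains coming from $A_\bullet^{op}$. One must also verify that the perverse indexing of $\Hom(I_\bullet C^{n-\ast}(X;R),R)$ is the one of the linear dual, namely that its perversity-$\overline r$ part is assembled from the complementary perversity $\overline t\ominus\overline r$ as in Example \ref{ex:pmonoidal_chcplx}; this is what, over a field and through the comparison $\widetilde N^\ast_{\overline p}(X)\simeq I_{\overline t-\overline p}C^\ast(X)$ of Theorem \ref{thm:field_intercoho_iso}, lets the composite be read as a self-duality quasi-isomorphism $A_\bullet\simeq DA_\bullet$ and fed into Proposition \ref{prop:BV_HH_pAlg_BE}.
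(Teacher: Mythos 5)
Your proposal is correct and follows essentially the same route as the paper, which simply presents the map as the composite $Bid\circ DP_X$ and invokes Theorem \ref{thm:Poincaré} for the first factor and \cite[Proposition A]{CST20Deligne} for the second; the paper gives no further argument. The extra details you supply — module-linearity of $-\cap\zeta$ via the cup--cap associativity, the perversity bookkeeping $\overline{0}\oplus\overline{p}=\overline{p}$, and naturality of the bidual injection — are exactly the checks left implicit in the text.
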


    % \begin{rem}
    %   The results we quote are stated for pseudomanifold which are not necessarily compact, the statements involve \emph{Borel-Moore intersection homology} and \emph{tame intersection homology}. Note that this last homological theory coincides with intersection homology for GM-perversities \cite[Remark 3.9]{CST19TopInvar}.
    % \end{rem}
    
    Recall that, when we work on a field, we have a quasi-isomorphism between the blown-up cochain complex and the dual intersection cochain complex (Theorem \ref{thm:field_intercoho_iso}). The previous result can be stated in the following terms.
    
    \begin{proposition}\label{prop:dual-qiso-bup}
    Let $X$ be an $n$-dimensional, compact, second countable and oriented pseudomanifold. Then, there exists a quasi-isomorphism of perverse (right) $\widetilde N^\ast_{\overline{\bullet}}(X;\mathbb{F})$-modules:
    \[
    \Dual_N:\widetilde N^\ast_{\bullet}(X;\mathbb{F}) \to \mathbb{D}\widetilde N^\bullet_\ast(X;\mathbb{F})\]
    where $\mathbb{D}\widetilde N^\bullet_\ast(X;\mathbb{F}):=\Hom(\widetilde N^{n-\ast}_{\overline{t}-\bullet}(X;\mathbb{F}), \mathbb{F})$.
    \end{proposition}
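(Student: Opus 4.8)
The plan is to obtain $\Dual_N$ as a mere translation, into the language of the blown-up complex, of the quasi-isomorphism produced in the preceding proposition — namely the composite
\[
\widetilde N^\ast_{\bullet}(X;\mathbb F)\xrightarrow[\simeq]{DP_X} I^{\bullet}C_{n-\ast}(X;\mathbb F)\xrightarrow[\simeq]{Bid}\Hom(I_{\bullet}C^{n-\ast}(X;\mathbb F),\mathbb F)
\]
of the Poincaré duality map $DP_X$ (cap product with a fundamental cycle) and the bidual injection $Bid$. The translation is carried out by the field-coefficient comparison of Theorem \ref{thm:field_intercoho_iso}, which lets us replace the dual intersection cochain complex $I_\bullet C^\ast$ by the blown-up complex up to quasi-isomorphism.

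First I would record the two formal inputs. Because $\mathbb F$ is a field, the functor $\Hom(-,\mathbb F)$ is exact, hence sends quasi-isomorphisms to quasi-isomorphisms; as a perverse operation it is governed by the flip $\overline r\mapsto\overline t-\overline r$ recorded in Example \ref{ex:pmonoidal_chcplx}, where the $\overline r$-component of a perverse linear dual dualises the $(\overline t-\overline r)$-component of the original. Moreover, dualising a morphism of perverse right $\widetilde N^\ast_\bullet(X;\mathbb F)$-modules again yields a morphism of perverse modules, so no structure is lost along the way.

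Next, Theorem \ref{thm:field_intercoho_iso} supplies, for each perversity, a quasi-isomorphism $\Inter:\widetilde N^\ast_{\overline t-\overline p}(X;\mathbb F)\xrightarrow{\simeq} I_{\overline p}C^\ast(X;\mathbb F)$, natural in the perversity and compatible with the cup/cap structures, i.e. a quasi-isomorphism of perverse complexes $\widetilde N^\ast_{\overline t-\bullet}\xrightarrow{\simeq} I_\bullet C^\ast$. Applying $\Hom(-,\mathbb F)$ in complementary degree $n-\ast$ then produces a quasi-isomorphism of perverse right $\widetilde N^\ast_\bullet(X;\mathbb F)$-modules
\[
\Hom(I_\bullet C^{n-\ast},\mathbb F)\xrightarrow{\simeq}\Hom(\widetilde N^{n-\ast}_{\overline t-\bullet},\mathbb F)=\mathbb D\widetilde N^\bullet_\ast(X;\mathbb F).
\]
Composing this with the map of the preceding proposition defines $\Dual_N$, and as a composite of quasi-isomorphisms of perverse right modules it is itself one, which is exactly the assertion.

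The only genuine work — and the step most likely to hide a subtlety — is the bookkeeping: checking that each of the three constituent maps ($DP_X$, $Bid$, and $\Hom(\Inter,\mathbb F)$) is compatible both with the structure morphisms of the perverse objects and with the right $\widetilde N^\ast_\bullet(X;\mathbb F)$-action, while keeping the degree shift $n-\ast$ and the perversity flip $\overline t-\bullet$ consistent across the composite. Everything else follows formally from the exactness of $\Hom(-,\mathbb F)$ over a field together with Theorem \ref{thm:field_intercoho_iso}.
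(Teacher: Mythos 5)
Your proposal is correct and follows essentially the same route as the paper: the paper obtains Proposition \ref{prop:dual-qiso-bup} by restating the preceding proposition (the composite $Bid\circ DP_X$) through the field-coefficient comparison $\Inter$ of Theorem \ref{thm:field_intercoho_iso}, replacing $\Hom(I_{\bullet}C^{n-\ast}(X;\mathbb{F}),\mathbb{F})$ by $\mathbb{D}\widetilde N^\bullet_\ast(X;\mathbb{F})$ exactly as you do. Your added remarks on the exactness of $\Hom(-,\mathbb{F})$ over a field and the perversity flip $\overline{t}-\bullet$ only make explicit what the paper leaves implicit.
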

    Note that $\mathbb{D}\widetilde N^\bullet_\ast(X;\mathbb{F})$ is equal to the linear dual $D\widetilde N^\bullet_\ast(X;\mathbb{F})$ by a shift of $-n$ in degree (Example \ref{ex:pmonoidal_chcplx}).
    The image of the unit $1\in\widetilde N^0_{\overline{0}}(X; \mathbb{F})$ by $\Dual_N$ is denoted $\Gamma_X$, it is given by 
    \begin{align*}
        \Gamma_X: \widetilde N_{\overline{t}}^n(X;\mathbb{F}) &\to \mathbb{F}   \\
          \alpha &\mapsto \Inter(1)(\alpha \cap \zeta)
    \end{align*}
    with $\zeta\in I^{\overline{0}}C_n(X;R)$ a fundamental cycle.

%\newpage    
%\nocite{*}
\emergencystretch=1em %retire overfull
\printbibliography[title={Bibliography}] 
\end{document}